\theoremstyle{plain}
\newtheorem{theorem}{Theorem}[section]
\newtheorem{lemma}[theorem]{Lemma}
\newtheorem{proposition}[theorem]{Proposition}
\newtheorem{corollary}[theorem]{Corollary}
\newtheorem*{theorem*}{Theorem}
\newtheoremstyle{named}{8pt}{8pt}{\itshape}{}{\bfseries}{.}{.5em}{#1 #3}
\theoremstyle{named}
\newtheorem*{namedtheorem}{Theorem}
\newtheorem*{namedcorollary}{Corollary}
\theoremstyle{definition}
\newtheorem{definition}[theorem]{Definition}
\newtheorem{remark}[theorem]{Remark}
\newtheorem{example}[theorem]{Example}
\newtheorem{notation}[theorem]{Notation}
\newtheorem{question}[theorem]{Question}
\newtheorem*{question*}{Question}
\numberwithin{equation}{section}
\let\c@equation\c@theorem  
\DeclareMathOperator{\End}{End}
\newcommand{\C}{\mathcal{C}}
\newcommand{\D}{\mathcal{D}}
\newcommand{\E}{\mathcal{E}}
\newcommand{\Z}{\mathcal{Z}}
\newcommand{\M}{\mathcal{M}}
\newcommand{\N}{\mathcal{N}}
\newcommand{\dS}{\mathbb{S}}
\newcommand{\dN}{\mathds{N}}
\newcommand{\fp}{\mathfrak{p}}
\newcommand{\fq}{\mathfrak{q}}
\newcommand{\fs}{\mathfrak{s}}
\newcommand{\fn}{\mathfrak{n}}
\newcommand{\fu}{\mathfrak{u}}
\newcommand{\fd}{\mathfrak{d}}
\newcommand{\fR}{\mathfrak{R}}
\newcommand{\ofn}{\overline{\mathfrak{n}}}
\newcommand{\fnr}{\mathfrak{n}^r}
\newcommand{\fnl}{\mathfrak{n}^l}
\newcommand{\bi}{\mathbbm{i}}
\newcommand{\zZ}{\mathbb{Z}}
\newcommand{\kk}{\Bbbk}
\newcommand{\kkx}{\kk^{\times}}
\newcommand{\ok}{\otimes_{\kk}}
\newcommand{\oA}{\otimes_A}
\newcommand{\id}{\textnormal{\textsf{Id}}}
\newcommand{\Rep}{\textnormal{\textsf{Rep}}}
\newcommand{\Corep}{\textnormal{\textsf{Corep}}}
\newcommand{\lv}{\hspace{.025cm}{}^{\textnormal{\Scale[0.8]{\vee}}} \hspace{-.025cm}}
\newcommand{\rv}{\hspace{.015cm}{}^{\textnormal{\Scale[0.8]{\vee}}} \hspace{-.03cm}}
\newcommand{\ev}{\textnormal{ev}}
\newcommand{\coev}{\textnormal{coev}}
\newcommand{\Vect}{\mathsf{Vec}}
\newcommand{\Hom}{\textnormal{Hom}}
\newcommand{\Homk}{\textnormal{Hom}_{\kk}}
\newcommand{\HomA}{\textnormal{Hom}_A}
\newcommand{\unit}{\mathds{1}}
\newcommand{\uHom}{\underline{\Hom}}
\newcommand{\uNat}{\textnormal{\underline{Nat}}}
\newcommand{\Frob}{{\sf Frob}}
\newcommand{\Rex}{{\sf Rex}}
\newcommand{\ra}{{\sf ra}}
\newcommand{\rra}{{\sf rra}}
\newcommand{\la}{{\sf la}}
\newcommand{\lla}{{\sf lla}}
\newcommand{\tr}{\triangleright}
\newcommand{\btr}{\raisebox{0.15ex}{\Scale[0.85]{\blacktriangleright}}}
\newcommand{\tl}{\triangleleft}
\newcommand{\rev}{\textnormal{rev}}
\newcommand{\op}{\textnormal{op}}
\newcommand{\mir}{\textnormal{mir}}
\definecolor{darkgreen}{RGB}{55,138,0}
\definecolor{burntorange}{RGB}{180,85,0}
\definecolor{navyblue}{RGB}{18,40,180}
\definecolor{cyan(process)}{rgb}{0.0, 0.6, 1.0}
\newcommand{\tev}{\widetilde{\ev}}
\newcommand{\ogH}{\overline{g_H}}
\newcommand{\ostar}{\overline{\star}}
\newcommand{\odN}{\overline{\dN}}
\newcommand{\onu}{\overline{\nu}}
\newcommand{\gH}{g_H}
\newcommand{\tnu}{\widetilde{\nu}}
\newcommand{\tg}{\widetilde{g}}
\newcommand{\tG}{\widetilde{G}}
\begin{document}

\begin{abstract}
Let $\C$ be a finite tensor category and $\M$ an exact left $\C$-module category. We call $\M$ unimodular if the finite multitensor category $\Rex_{\C}(\M)$ of right exact $\C$-module endofunctors of $\M$ is unimodular. In this article, we provide various characterizations, properties, and examples of unimodular module categories. 
As our first application, we employ unimodular module categories to construct (commutative) Frobenius algebra objects in the Drinfeld center of any finite tensor category. When $\C$ is a pivotal category, and $\M$ is a unimodular, pivotal left $\C$-module category, the Frobenius algebra objects are symmetric as well. Our second application is a classification of unimodular module categories over the category of finite dimensional representations of a finite dimensional Hopf algebra; this answers a question of Shimizu \cite{shimizu2022Nakayama}. Using this, we provide an example of a finite tensor category whose categorical Morita equivalence class does not contain any unimodular tensor category.

\end{abstract}

\subjclass[2020]{16T05, 18M15, 18M20}
\keywords{Frobenius algebra, Hopf algebra, finite tensor category, unimodular module category}

\maketitle

\setcounter{tocdepth}{2}

{\small
\tableofcontents
}

\section{Introduction}
Unimodularity is a classical notion, with roots in linear algebra. Building upon the definition of a unimodular matrix (determinant  $=\;\pm 1$), unimodularity of lattices, bilinear forms, topological groups, Hopf algebras, Poisson algebras, tensor categories, etc.~is defined. In this work, we contribute to the literature by defining and studying unimodular module categories. 

\smallskip

Research in this direction began with work on locally compact topological groups. Such groups are equipped with a left and a right invariant Haar measure, and when the left invariant Haar measure is also right invariant, we call the group \textit{unimodular} \cite{hewitt2012abstract}.
Generalizing this, Sweedler \cite{sweedler1969integrals} introduced the notions of left and right integrals for Hopf algebras. Then, a finite dimensional Hopf algebra $H$ is said be unimodular if its \textit{distinguished character}, which measures how far a left integral is from being a right integral, is identically the unit element of $H$ \cite{larson1969associative}. Semisimple Hopf algebras, for instance, are unimodular \cite{larson1969associative}. Etingof and Ostrik \cite{etingof2004finite} defined an analogue of the distinguished character called the \textit{distinguished invertible object}, denoted as $D$, for any tensor category $\C$.
Then, $\C$ is called \textit{unimodular} if $D$ is isomorphic to the unit object of $\C$. 
As one would expect: \\
\textbullet \; the tensor category $\Rep(H)$ is unimodular precisely when $H$ is unimodular; and \\
\textbullet \; semisimple tensor categories are unimodular \cite{etingof2004analogue}. \\
In fact, many results in the semisimple case generalize to the unimodular setting.

\smallskip

Unimodularity of a tensor category is a crucial property for topological applications. 
Non-semisimple generalizations of the Reshetikhin-Turaev invariants \cite{reshetikhin1991invariants}, which are defined using certain tensor categories as input, require the input category to be unimodular. Another important class of invariants, the Turaev-Viro invariants \cite{turaev1992state}, are built using spherical fusion categories as input. However, in the non-semisimple setting, the definition of a spherical tensor category \cite{douglas2018dualizable} requires the underlying category to be unimodular. Recent works like \cite{beliakova2022kerler} have also employed unimodular (ribbon) tensor categories to construct invariants of $4$-dimensional manifolds.

\smallskip

In this work, we define what it means for an exact module category over a finite tensor category to be unimodular. The following definition is inspired by \cite[Remark~4.27]{fuchs2020eilenberg}.

\begin{definition}\label{defn:unimodular}
    An exact left $\C$-module category $\M$ is called \textit{unimodular} if the multitensor category $\Rex_{\C}(\M)$ of right exact, left $\C$-module endofunctors of $\M$ is unimodular.
\end{definition}

Our primary motivation for studying unimodular module categories is to provide a functorial construction of (commutative, special, symmetric) Frobenius algebras in \textit{modular tensor categories} (MTCs). MTCs are important models for the mathematical study of $2$-dimensional rational Conformal Field Theories (CFTs) \cite{fuchs2002tft} and $3$-dimensional Topological Quantum Field Theories (TQFTs) \cite{reshetikhin1991invariants}. 
For a $2$D-CFT modeled by a modular tensor category $\C$, commutative, symmetric Frobenius algebra objects in $\C$ are in bijection with a consistent system of bulk field correlators \cite{fuchs2017consistent}. On the other hand, for $3$D-TQFTs built using a MTC $\C$ \cite{turaev1992modular,kerler2001non,bartlett2015modular}, commutative, special, symmetric Frobenius algebras in $\C$ can be used to produce new MTCs via the construction of the category of local modules \cite{pareigis1995braiding,schauenburg2001monoidal,kirillov2002q,laugwitz2022local}. 

\smallskip

While prior works like \cite{frohlich2006correspondences} have dealt with constructions of Frobenius algebra objects in the case when the MTC is semisimple, recent efforts towards constructing non-semisimple CFTs \cite{fuchs2021bulk} and TQFTs \cite{de20223} require new constructions of Frobenius algebra objects in general MTCs. 
In this work, we address this requirement for MTCs obtained as Drinfeld centers of spherical tensor categories. More generally, we employ unimodular module categories to construct `nice' Frobenius algebra objects in the Drinfeld center $\Z(\C)$ of any finite tensor category. 


\subsection{Main results}
The background material needed for the following discussion can be found in Section~\ref{sec:background}. Let $\C$ be a finite tensor category and $\M$ an exact left $\C$-module category. Our strategy to construct (special, separable, symmetric) Frobenius algebras in $\Z(\C)$ is to construct Frobenius monoidal functors with target $\Z(\C)$. Such functors preserve Frobenius algebras and can be used to transfer (special, separable, symmetric) Frobenius algebras from the input category to $\Z(\C)$.

To construct such functors, we employ unimodular module categories, which are studied in Section~\ref{sec:unimodular}. A major role is played by the following functor introduced in \cite{shimizu2020further}.
\[\Psi:=\Psi_{\M}: \Z(\C) \rightarrow \Rex_{\C}(\M), \hspace{1cm} (c,\sigma)\mapsto (c\tr - , s^{\sigma}).\]
Building on prior works \cite{shimizu2016unimodular, spherical2022,yadav2022frobenius}, we provide various characterizations of unimodularity of module categories as follows. In particular, we obtain the following result.

\begin{namedtheorem}[A]\textnormal{(Theorem~\ref{thm:equivalentUni})}\label{thm:intro2}
Let $\C$ be a finite tensor category and $\M$ an indecomposable, exact, left $\C$-module category. Then, the following are equivalent.
\begin{enumerate}
    \item[\upshape{(a)}] $\M$ is a unimodular module category.
    \item[\upshape{(b)}] $\Rex_{\C}(\M)$ is a unimodular tensor category.
    \item[\upshape{(c)}] $\Psi^{\ra}(\id_{\M})$ is a Frobenius algebra in $\Z(\C)$.
    \item[\upshape{(d)}] $\Psi^{\ra}$ is a Frobenius monoidal functor. 
\end{enumerate}
\end{namedtheorem}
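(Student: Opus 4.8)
The plan is to derive (a)$\Leftrightarrow$(b) formally and then run the cycle (b)$\Rightarrow$(d)$\Rightarrow$(c)$\Rightarrow$(b). The equivalence (a)$\Leftrightarrow$(b) is essentially Definition~\ref{defn:unimodular}: since $\M$ is indecomposable and exact over the finite tensor category $\C$, the a priori finite multitensor category $\Rex_{\C}(\M)$ has a simple unit object $\id_{\M}$ and is therefore a genuine finite \emph{tensor} category (see \cite{etingof2004finite}), so ``unimodular multitensor category'' and ``unimodular tensor category'' coincide for it. Before the rest, I would record the two properties of $\Psi = \Psi_{\M}$ that carry the argument: it is \emph{strong monoidal} --- composing the module endofunctors $c \tr -$ and $d \tr -$ is canonically $(c \otimes d) \tr -$ via the module associativity constraint, compatibly with the half-braidings $s^{\sigma}$, so that $\Psi$ is the composite of the canonical braided equivalence $\Z(\C) \simeq \Z(\Rex_{\C}(\M))$ with the forgetful functor --- and it is \emph{exact}, as $- \tr -$ is exact in each variable. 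Hence $\Psi$ has a left adjoint $\Psi^{\la}$ and a right adjoint $\Psi^{\ra}$; by doctrinal adjunction $\Psi^{\ra}$ is lax monoidal and $\Psi^{\la}$ is oplax monoidal, so $A := \Psi^{\ra}(\id_{\M}) = \Psi^{\ra}(\unit_{\Rex_{\C}(\M)})$ is canonically a (commutative) algebra in $\Z(\C)$ and $\Psi^{\la}(\id_{\M})$ a canonical coalgebra.

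The implication (d)$\Rightarrow$(c) is then immediate, since a Frobenius monoidal functor sends the trivial Frobenius algebra $\unit$ to a Frobenius algebra and $A = \Psi^{\ra}(\unit)$. For (c)$\Rightarrow$(b): a Frobenius structure on $A$ refines its canonical algebra structure by a compatible coalgebra structure, which --- transported across the unit and counit of $\Psi \dashv \Psi^{\ra}$ --- makes $A \cong \Psi^{\la}(\id_{\M})$ compatibly with the (co)algebra structures, hence $\Psi^{\ra} \cong \Psi^{\la}$. On the other hand $\Psi^{\la}$ and $\Psi^{\ra}$ are always related by a comparison built from the relative Serre functor of $\M$ over $\C$ (equivalently, from Nakayama functors, \cite{shimizu2022Nakayama,fuchs2020eilenberg}), and since $\Z(\C)$ is \emph{always} unimodular (\cite{shimizu2016unimodular}) this comparison is an isomorphism exactly when the distinguished invertible object $D$ of $\Rex_{\C}(\M)$ is trivial; thus $\Psi^{\ra} \cong \Psi^{\la}$ forces (b).

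It remains to prove (b)$\Rightarrow$(d), which I expect to be the main obstacle. Assume $\Rex_{\C}(\M)$ is unimodular. The relative-Serre comparison above then already gives an isomorphism $\Psi^{\la} \cong \Psi^{\ra}$, and I would pin it down as the mate of a nonzero integral of $\Rex_{\C}(\M)$, which under unimodularity can be chosen ``two-sided'', so that $\Psi^{\la} \dashv \Psi \dashv \Psi^{\ra}$ becomes an ambidextrous adjunction compatible with the monoidal structures; transporting the oplax structure of $\Psi^{\la}$ along $\Psi^{\la} \cong \Psi^{\ra}$ then endows $\Psi^{\ra}$ with a bimonoidal (lax plus oplax) structure. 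The hard step will be checking the two Frobenius compatibility equations --- that $\Psi^{\ra}$ is \emph{Frobenius} monoidal and not merely bimonoidal. Following the template of \cite{spherical2022,yadav2022frobenius}, I would write both structure morphisms in terms of the unit/counit of the two adjunctions and reduce the Frobenius equations to the coherence of the half-braidings $s^{\sigma}$ together with the invariance of the chosen integral under the action $\tr$ --- i.e.\ to the relative modular datum on $\M$ that unimodularity of $\Rex_{\C}(\M)$ supplies.
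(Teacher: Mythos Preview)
Your argument has two genuine gaps. In (c)$\Rightarrow$(b), you pass from ``$A=\Psi^{\ra}(\id_{\M})$ admits a Frobenius structure'' to ``$\Psi^{\ra}\cong\Psi^{\la}$ as functors''; but a coalgebra structure on $\Psi^{\ra}(\unit)$ only gives you an object-level isomorphism $\Psi^{\ra}(\unit)\cong\Psi^{\la}(\unit)$, and promoting this to a natural isomorphism of the whole functors requires an additional projection-formula argument that you have not supplied. Even granting $\Psi^{\ra}\cong\Psi^{\la}$, your claim that the Nakayama/Serre comparison between them is an isomorphism \emph{exactly} when $D_{\Rex_{\C}(\M)}$ is trivial needs a proof. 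In (b)$\Rightarrow$(d), you correctly identify the Frobenius compatibility equations as the crux and sketch an ambidextrous-adjunction approach, but you do not carry it out; ``reduce to the coherence of the half-braidings together with invariance of the chosen integral'' is a plan, not a proof.

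The paper avoids both difficulties by fully exploiting the factorization you mention only in passing: writing $\Psi=U'\circ\Omega\circ\Theta$ with $\Theta,\Omega$ braided monoidal equivalences and $U':\Z(\Rex_{\C}(\M))\to\Rex_{\C}(\M)$ the forgetful functor, one has $\Psi^{\ra}\cong\Theta^{-1}\circ\Omega^{-1}\circ R$. Since monoidal equivalences preserve and reflect Frobenius algebras and Frobenius monoidal functors, the equivalence of (b) and (c) reduces to the known fact (\cite[Theorem~5.6(3)]{shimizu2016unimodular}) that $R(\unit)$ is Frobenius in $\Z(\D)$ iff $\D$ is unimodular, for $\D=\Rex_{\C}(\M)$. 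For (c)$\Leftrightarrow$(d) the paper does not argue directly at all: it invokes \cite[Theorem~1.2(i)]{yadav2022frobenius}, a general result (for tensor functors between finite tensor categories with simple unit) stating that $F^{\ra}$ is Frobenius monoidal iff $F^{\ra}(\unit)$ is a Frobenius algebra. What you are attempting in (b)$\Rightarrow$(d) is essentially a reproof of this theorem in the special case $F=\Psi$; that is a legitimate project, but it is substantially harder than the route the paper takes, and your outline does not yet close the gap.
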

\noindent
Here, for any functor $F$, $F^{\la}$ (resp., $F^{\ra}$) denotes the left (resp., right) adjoint of $F$. 
In particular, part (d) yields a supply of Frobenius algebras in the Drinfeld center. Furthermore, the following result describes when the functor $\Psi^{\ra}$ and the algebra obtained using it are separable or special.

\begin{namedcorollary}[B]\textnormal{(Corollary~\ref{cor:PsiRaSS})}
Let $\C$ be a finite tensor category and $\M$ be an indecomposable, unimodular left $\C$-module category. Then, $\Psi^{\ra}$ is a separable (resp. special) Frobenius monoidal functor if and only if the Frobenius algebra $\Psi^{\ra}(\id_{\M})$ in $\Z(\C)$ is separable (resp. special).    
\end{namedcorollary}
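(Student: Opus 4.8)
The plan is to leverage Named Theorem~A, which already identifies $\Psi^{\ra}$ as a Frobenius monoidal functor whenever $\M$ is unimodular, so that the algebra $\Psi^{\ra}(\id_\M)$ carries a canonical Frobenius structure in $\Z(\C)$. The statement to prove is then the comparison: $\Psi^{\ra}$ is separable (resp.\ special) as a Frobenius monoidal functor if and only if its value on the monoidal unit $\id_\M$ of $\Rex_\C(\M)$ is a separable (resp.\ special) Frobenius algebra in $\Z(\C)$. One direction is essentially formal: a separable (resp.\ special) Frobenius monoidal functor sends the unit object — which is trivially a separable/special Frobenius algebra — to a separable (resp.\ special) Frobenius algebra, because such functors preserve the relevant Frobenius algebra structure including the (co)multiplication splittings. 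So the content is really the converse.

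For the converse, the key step is to recall the precise definition of a separable (resp.\ special) Frobenius monoidal functor $F$: it is a Frobenius monoidal functor such that the composite $F(x)\xrightarrow{F_2^{-1}\text{-type map}} F(x\otimes x) \xrightarrow{} \cdots$ — more precisely, the natural transformation built from the laxitor and oplaxitor, $\mu_{F(x),F(y)}\circ \delta_{F(x),F(y)}$ or the object-wise condition $F_2 \circ F^2$ being (a scalar multiple of) the identity, equivalently the induced map on $F(\unit)$ and on each $F(x)$ — is invertible (resp.\ a specified multiple of the identity). I would show that for $\Psi^{\ra}$ these separability/speciality natural transformations, evaluated at an arbitrary object, are controlled by their value at $\id_\M$: this is where I expect to use that $\Rex_\C(\M)$ is generated as a module category (or that $\id_\M$ is a generator in the appropriate sense) together with the compatibility of adjunction units/counits with the monoidal structure. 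Concretely, I would express the relevant morphism at a general object $M \in \Rex_\C(\M)$ in terms of the one at $\id_\M$ via the $\Rex_\C(\M)$-module structure and the fact that $\Psi^{\ra}$ is a functor of module categories (or at least that its Frobenius structure is natural), reducing invertibility/scalar-identity at all objects to the single object $\id_\M$.

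The main obstacle will be the second step: making rigorous that the separability (resp.\ speciality) condition ``propagates'' from the unit object $\id_\M$ to all of $\Rex_\C(\M)$. For separability this should follow from naturality plus the observation that the obstruction to separability of a Frobenius monoidal functor is itself a monoidal natural transformation, hence invertible everywhere as soon as it is invertible on the unit (using that $\Rex_\C(\M)$ is a finite multitensor category and the tensor product is exact, so one can detect invertibility on generators). For speciality one additionally needs to track the normalization scalar and check it is the same at every object — again expected to follow from the monoidality of the relevant natural transformation and indecomposability of $\M$, which forces the scalar to be constant. I would organize the proof as: (i) recall the definitions of separable/special Frobenius monoidal functor and of separable/special Frobenius algebra; (ii) note $\unit \in \Rex_\C(\M)$ is $\id_\M$ and is (trivially) special Frobenius, so the ``only if'' direction is immediate from preservation; (iii) prove the ``if'' direction by showing the separability/speciality morphism of $\Psi^{\ra}$ is a monoidal natural transformation and invoking the above propagation argument. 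I do not anticipate needing any input beyond Named Theorem~A, the basic theory of Frobenius monoidal functors, and the structural facts about $\Rex_\C(\M)$ recalled in Section~\ref{sec:background}.
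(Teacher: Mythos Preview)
The paper's own proof of this corollary is a one-line citation of \cite[Theorem~1.2(i)]{yadav2022frobenius}, the same external result already invoked in the proof of Theorem~A for the equivalence (d)$\Leftrightarrow$(e). So your proposal is not the paper's approach; you are effectively attempting to reprove that cited theorem in situ.

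Your forward direction is fine. For the converse, your two heuristics are of unequal quality. The claim that ``the obstruction to separability of a Frobenius monoidal functor is itself a monoidal natural transformation, hence invertible everywhere as soon as it is invertible on the unit'' is not correct as stated: the composite $\tau_{F,G}:=(\Psi^{\ra})_2(F,G)\circ(\Psi^{\ra})^2(F,G)$ is a natural endomorphism of the bifunctor $(F,G)\mapsto\Psi^{\ra}(F\circ G)$, not a monoidal natural endo-transformation of $\Psi^{\ra}$, and there is no general principle forcing such a thing to be determined by its value at $(\id_\M,\id_\M)$. Your other heuristic --- that $\Psi^{\ra}$ is a ``functor of module categories'' and this lets you transport the condition from $\id_\M$ to arbitrary objects --- is the correct intuition, but it is precisely the \emph{Hopf adjunction} (projection formula) structure: because $\Psi$ is a tensor functor between finite tensor categories, the canonical morphisms $\Psi^{\ra}(F)\otimes Z\to\Psi^{\ra}(F\circ\Psi(Z))$ and their variants are isomorphisms, and these intertwine the lax and oplax constraints so as to reduce $\tau_{F,G}$ to $\tau_{\id_\M,\id_\M}$ tensored with identities. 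This is exactly what \cite{yadav2022frobenius} establishes and is a genuine structural input not contained in Section~\ref{sec:background}. If you want a self-contained argument you must either prove the projection formula for $\Psi^{\ra}$ here or cite it; the ``monoidal natural transformation'' shortcut should be dropped.
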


Our goal is to construct special, symmetric Frobenius algebras. However, to discuss symmetric Frobenius algebras, we have to move to the pivotal setting.
When $\C$ is pivotal and $\M$ is a pivotal left $\C$-module category, then the categories $\Z(\C)$ and $\Rex_{\C}(\M)$ are pivotal \cite{schaumann2015pivotal,shimizu2019relative}, and $\Psi$ is a pivotal functor. Then, the following result describes sufficient conditions needed to ensure that the functor $\Psi^{\ra}$ is a pivotal functor. When these conditions are satisfied, $\Psi^{\ra}$ becomes a tool of producing special, symmetric Frobenius algebras in $\Z(\C)$.

\begin{namedtheorem}[C]\textnormal{(Theorems~\ref{thm:PsiPivotal}, \ref{thm:PsiSpecial})}\label{thm:intro3}
Let $\C$ be a pivotal finite tensor category and $\M$ an indecomposable, unimodular, pivotal left $\C$-module category. Then, 
\begin{itemize}
    \item $\Psi^{\ra}$ is a pivotal Frobenius monoidal functor. 
    \item $\Psi^{\ra}$ is a special pivotal Frobenius monoidal functor if and only if $\dim(\Psi^{\ra}(\id_{\M}))\neq 0$.
\end{itemize}
\end{namedtheorem}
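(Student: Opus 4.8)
The plan is to leverage Named Theorem~A, which already establishes that under the stated hypotheses $\Psi^{\ra}$ is a Frobenius monoidal functor, together with the pivotal structure theory. For the first bullet point, the key observation is that $\Psi$ is a pivotal functor between the pivotal categories $\Z(\C)$ and $\Rex_{\C}(\M)$, and that being a pivotal functor is a self-dual condition: the right adjoint of a pivotal functor between pivotal categories inherits a canonical pivotal structure. More precisely, I would first recall that for a monoidal functor $F$ between pivotal categories, the data of a pivotal structure on $F$ is equivalent (via mates/adjunction) to data on $F^{\ra}$; since $\Psi^{\ra}$ is already known to be Frobenius monoidal (hence in particular oplax and lax monoidal), one checks that the pivotal structure $j$ on $\Psi$ transports along the adjunction $\Psi \dashv \Psi^{\ra}$ to a pivotal structure on $\Psi^{\ra}$. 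The compatibility with the Frobenius monoidal structure needs to be verified; this amounts to a string-diagram calculation showing that the transported isomorphism $\Psi^{\ra}(X)^{\vee\vee} \cong \Psi^{\ra}(X^{\vee\vee})$ is compatible with both the lax and oplax structures, which follows because both structures on $\Psi^{\ra}$ are themselves mates of the (co)lax structures on $\Psi$.

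For the second bullet point, recall that a pivotal Frobenius monoidal functor $F$ is called \emph{special} if the composite of the unit of the algebra structure with the counit (and dually the composite of comultiplication with multiplication) are nonzero scalar multiples of identities; equivalently, for such a functor the relevant scalars are governed by $\dim(F(\unit))$ in the target. The strategy is to identify the "Frobenius algebra" associated to $\Psi^{\ra}$ with $A := \Psi^{\ra}(\id_{\M})$, which by Named Theorem~A is a Frobenius algebra in $\Z(\C)$, and to invoke Named Corollary~B, which says $\Psi^{\ra}$ is special Frobenius monoidal if and only if $A$ is a special Frobenius algebra. In the pivotal setting, specialness of a Frobenius algebra $A$ in a pivotal category is equivalent to the non-vanishing of $\dim(A)$ (this is a standard fact: the defining scalars of a symmetric special Frobenius algebra multiply to $\dim(A)$, and a Frobenius algebra whose "window element" has nonzero trace is automatically special after rescaling the coalgebra structure — but here one should be careful to use precisely the notion of "special" as normalized in the paper). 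So the proof reduces to: (i) $A$ is symmetric (which comes from the pivotal structure on $\Psi^{\ra}$ just established in bullet one, together with the corresponding statement in Named Theorem~C's hypotheses or from the pivotal analogue of~A); (ii) for a symmetric Frobenius algebra in a pivotal category, specialness $\iff$ $\dim(A) \neq 0$; (iii) combine with Named Corollary~B.

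I expect the main obstacle to be bullet one: verifying the \emph{compatibility} of the transported pivotal structure on $\Psi^{\ra}$ with its Frobenius monoidal structure, rather than merely producing an isomorphism $\Psi^{\ra}(-)^{\vee\vee} \cong \Psi^{\ra}((-)^{\vee\vee})$. The subtlety is that $\Psi^{\ra}$ acquires \emph{two} mate structures — a lax one (mate of the colax structure of $\Psi$, using that $\Psi$ is strong monoidal with left/right adjoints) and a colax one — and one must check these are compatible with duality simultaneously. I would handle this by working entirely with string diagrams in the graphical calculus for pivotal categories, using the fact (from the pivotal module category literature, e.g.~the references cited before Named Theorem~C) that the pivotal structure on $\Rex_{\C}(\M)$ is constructed precisely so that evaluation/coevaluation of module endofunctors are computed via the module trace, and that $\Psi$ intertwines these. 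A secondary obstacle is pinning down the exact normalization convention for "special" used in the paper versus "separable"; once that is fixed, step (ii) above is a short computation with the window element $\mu \circ \Delta \circ \eta$ whose trace equals $\dim(A)$ up to the relevant scalars.
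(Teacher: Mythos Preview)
Your approach to the first bullet is genuinely different from the paper's and, as you yourself flag, leaves a gap at precisely the hard step. The paper does \emph{not} attempt to transport pivotality of $\Psi$ along the adjunction $\Psi \dashv \Psi^{\ra}$; this transport is not known to work in general, and your sketch of the required string-diagram verification is not carried out. Instead, the paper invokes a criterion from \cite{yadav2022frobenius} (Theorem~1.2(ii) there): $\Psi^{\ra}$ is pivotal Frobenius monoidal as soon as the algebra $\Psi^{\ra}(\id_{\M})$ is \emph{symmetric} Frobenius in $\Z(\C)$. This reduces the problem to a statement about a single algebra, which is then checked via the factorization $\Psi^{\ra}\cong \Theta^{-1}\circ\Omega^{-1}\circ R$: since $\Rex_{\C}(\M)$ is pivotal and unimodular, $R(\id_{\M})$ is symmetric Frobenius in $\Z(\Rex_{\C}(\M))$ by Theorem~\ref{thm:adjAlg}(c), and the pivotal equivalences $\Theta^{-1}$, $\Omega^{-1}$ preserve symmetric Frobenius algebras. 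Your mate-transport strategy would in effect have to reprove the cited criterion, which is where the actual content lies.

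For the second bullet your outline is close to the paper's, but step~(ii) as you state it is not correct: symmetry alone does not force $m\circ\Delta$ to be a scalar multiple of $\id_A$. The paper uses instead that $\Psi^{\ra}(\id_{\M})$ is \emph{connected} (Lemma~\ref{lem:connected}, an immediate consequence of the adjunction and the indecomposability of $\M$); for a connected Frobenius algebra one has $m\circ\Delta\in\Hom_A(A,A)\cong\Hom_{\Z(\C)}(\unit,A)\cong\kk$, so $\dim(A)\neq 0$ forces specialness (Lemma~\ref{lem:speFrob}), and then Corollary~\ref{cor:PsiRaSS} gives specialness of $\Psi^{\ra}$. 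The forward direction is Lemma~\ref{lem:dimPiv}, which yields $\dim(\Psi^{\ra}(\id_{\M}))=\beta_0\beta_2\dim(\id_{\M})=\beta_0\beta_2\neq 0$.
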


\begin{remark}
Given $F\in \Rex_{\C}(\M,\N)$, the functor $F\circ F^{\la}$ is an algebra in $\Rex_{\C}(\N)$. Then, one can show that the algebras $\Psi^{\ra}(F\circ F^{\la})$ are isomorphic to the algebras $\uNat(F,F)$ introduced in \cite{fuchs2021internal}. One of their main results \cite[Corollary~19]{fuchs2021internal} provides sufficient conditions under which the algebras $\uNat(F,F)$ are symmetric Frobenius. This result prompted us to investigate the functor $\Psi^{\ra}$ in this paper. 
\end{remark}


In Section~\ref{sec:comoduleAlg}, we consider the case when $\C:=\Rep(H)$ is the category of finite-dimensional representations of a finite-dimensional Hopf algebra $H$. In this case, every exact left $\C$-module category $\M$ is of the form $\Rep(A)$ for $A$ a left $H$-comodule algebra. We employ results from \cite{shimizu2019relative,shibata2021modified} to calculate the distinguished invertible object $D_{\Rex_{\C}(\M)}$ of $\Rex_{\C}(\M)$. To understand when $\Rex_{\C}(\M)$ is unimodular, that is, when $D_{\Rex_{\C}(\M)}$ and $\id_{\M}$ are isomorphic, we introduce \textit{unimodular elements} of an exact $H$-comodule algebras (Definition~\ref{defn:uniElement}) and obtain the following result.

\begin{namedtheorem}[D]{\textup{(Theorem~\ref{thm:uniclassification})}}
Let $H$ be a finite-dimensional Hopf algebra and $A$ an exact left $H$-comodule algebra. Then the left $\Rep(H)$-module category $\Rep(A)$ is unimodular if and only if $A$ admits a unimodular element. 
\end{namedtheorem}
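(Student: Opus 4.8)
The plan is to reduce the statement to the explicit description of the distinguished invertible object $D:=D_{\Rex_{\C}(\M)}$ obtained earlier in Section~\ref{sec:comoduleAlg}, and then translate the condition $D\cong\id_{\M}$ into the language of $H$-comodule algebras. First I would recall that $\M=\Rep(A)$ for $A$ an exact left $H$-comodule algebra, and that $\Rex_{\C}(\M)$ is equivalent (as a multitensor category) to the category of $(A,A)$-bimodules in $\Rep(H)$, i.e.\ the relative Eilenberg--Watts picture; under this equivalence $\id_{\M}$ corresponds to the bimodule $A$ itself. Using the computation of $D$ from \cite{shimizu2019relative,shibata2021modified} that precedes this theorem, $D$ is realized as a rank-one free $A$-module (on each side) with an $H$-comodule structure twisted by the distinguished grouplike element $g_H$ of $H$ and the modular automorphism/character data of $A$. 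So the isomorphism $D\cong A$ as $(A,A)$-bimodule objects in $\Rep(H)$ amounts to the existence of an element of $A$ intertwining these two bimodule-in-$\Rep(H)$ structures; this is precisely what Definition~\ref{defn:uniElement} of a unimodular element is designed to encode.

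The key steps, in order, are: (1) State precisely the bimodule-object model of $\Rex_{\C}(\M)$ and identify $\id_{\M}$ with $A$ therein; (2) Import the formula for $D$ from the preceding results, writing it as $A$ with left action unchanged, right action unchanged, but $H$-coaction modified by $g_H$ together with the Nakayama-type automorphism governing exactness of $A$; (3) Observe that any $(A,A)$-bimodule-in-$\Rep(H)$ map $A\to D$ is determined by the image $\lambda$ of $1_A$, and that the bimodule condition forces $\lambda$ to be central-like up to the twist, while the $H$-comodule condition forces the coaction-compatibility that is exactly the defining relation of a unimodular element; (4) Conversely, check that a unimodular element $\lambda$ produces a well-defined bimodule-in-$\Rep(H)$ morphism $A\to D$, which is automatically an isomorphism since $D$ is rank-one free and $\lambda$ must be invertible (here one uses that an equivalence of the category with itself must send an invertible object to an invertible object, or directly that the twisted bimodule structures are related by conjugation). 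Combining (3) and (4) gives $D\cong\id_{\M}$ iff a unimodular element exists, which is the claim.

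The main obstacle I expect is bookkeeping the interaction between the two twists — the coalgebra twist by $g_H$ coming from the non-unimodularity of $H$, and the algebra twist by the (relative) modular automorphism of $A$ coming from $A$ being merely exact rather than, say, Frobenius — and making sure Definition~\ref{defn:uniElement} packages exactly their combination with no extra or missing conditions. A second, related subtlety is verifying that the candidate map is an isomorphism rather than just a morphism: this requires knowing $\lambda$ is a unit in $A$, which should follow either from the defining relations of a unimodular element (they should force $\lambda$ to be invertible, e.g.\ by exhibiting a two-sided inverse built from the antipode-type data) or from a dimension/rank argument using that $D$ is invertible in $\Rex_{\C}(\M)$. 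Once these twist-compatibility computations are pinned down, the rest is a direct unwinding of definitions.
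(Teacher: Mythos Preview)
Your outline is sound and would lead to a correct proof, but it takes a different route from the paper. The paper does \emph{not} pass through the Eilenberg--Watts equivalence $\Rex_{\C}(\M)\simeq {}_{A}\D_{A}$; it stays entirely in the functor picture. Concretely, the paper uses the characterisation of unimodularity as the existence of a $\C$-module natural isomorphism $\fu:\id_{\M}\to \dS\,\dN$ (Definition~\ref{defn:uniModCat}), and it has already computed $\dS\,\dN$ explicitly as the twist functor ${}_{\tnu}(-)$ together with its $\C$-module constraint $\fd$ governed by the element $\Im\in H\otimes A$ (Theorem~\ref{thm:uniFunctor}). The proof is then two lines: set $\tg:=\fu_{A}(1_{A})$; naturality forces $\fu_{M}(m)=\tg\cdot m$, $A$-linearity of $\fu_{A}:A\to{}_{\tnu}A$ gives (\ref{eq:uniElement1}), and commutativity of diagram~(\ref{eq:unimodularModule}) gives (\ref{eq:uniElement2}). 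Invertibility of $\tg$ is immediate since $\fu_{A}$ is an isomorphism. The converse reverses this.

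Your bimodule route is the same argument transported across the monoidal equivalence, so the content is equivalent; what it costs you is an extra translation step (identifying the $\C$-module constraint $\fd$ with an $H$-coaction on the corresponding bimodule), which the paper avoids precisely because the preceding section already packaged everything in the functor language. One small correction: since $A$ is a left $H$-\emph{comodule} algebra, it is an algebra object in $\D=\Corep(H)$, not in $\Rep(H)$; the bimodule model you want is ${}_{A}\Corep(H)_{A}$, and the compatibility datum on $D$ is an $H$-coaction, not an $H$-action. Your worry about invertibility of the element is a non-issue in either approach: in the forward direction it comes for free from the map being an isomorphism, and in the converse direction it is part of Definition~\ref{defn:uniElement}.
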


\noindent
The question of unimodularity of $\M$ (which, by definition, is equivalent to the unimodularity of $\Rex_{\C}(\M)$) has also recently been investigated in \cite{shimizu2022Nakayama} in the case when the algebra $A$ admits a grouplike cointegral. Theorem~D provides explicit conditions for when the module category $\Rep(A)$ is unimodular without any such assumption, thereby answering \cite[Question~7.25]{shimizu2022Nakayama}. Furthermore, Theorem~D reduces to Shimizu's result \cite[Corollary~7.10]{shimizu2022Nakayama}, when the comodule algebra $A$ admits a grouplike cointegral; see Corollary~\ref{cor:grouplikecointegral}.

\smallskip

A natural question to ask is, whether every finite tensor category admits a unimodular module category. We obtain the following result which provides a negative answer to this question. 

\begin{namedtheorem}[E]{\textup{(Theorem~\ref{thm:Taft})}}
Let $T(\omega)$ denote the Taft algebra. Then,
the category $\Rep(T(\omega))$ does not admit a unimodular module category.
\end{namedtheorem}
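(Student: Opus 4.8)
The plan is to apply Theorem~D with $H=T(\omega)$ the Taft algebra and show that \emph{no} exact left $H$-comodule algebra $A$ admits a unimodular element; since every indecomposable exact left $\Rep(H)$-module category is of the form $\Rep(A)$ for such an $A$, this rules out unimodular module categories. Concretely, recall that the Taft algebra $T(\omega)$ of dimension $n^2$ (with $\omega$ a primitive $n$-th root of unity, $n\ge 2$) is generated by a grouplike $g$ with $g^n=1$ and a $(1,g)$-skew-primitive $x$ with $x^n=0$, $gx=\omega xg$; it is a well-known example of a non-semisimple, non-unimodular finite-dimensional Hopf algebra (its distinguished grouplike element is $g^{?}$, a nontrivial power of $g$). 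First I would recall the classification of exact (equivalently, by the Hopf case, simply all) left $T(\omega)$-comodule algebras. This is essentially the classification of $\Rep(T(\omega))$-module categories, which is known (work of Etingof--Ostrik and subsequent authors on module categories over pointed Hopf algebras, or one can quote the explicit comodule-algebra list): they are built from the subgroups of the grouplike group $\langle g\rangle\cong \zZ/n$ together with a cohomological/scaling datum, and the extra skew-primitive direction. The point is that the list is short and each member can be examined directly.

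Next I would unwind Definition~\ref{defn:uniElement} of a unimodular element: it encodes, via the formula for $D_{\Rex_\C(\M)}$ computed in Section~\ref{sec:comoduleAlg} from the results of \cite{shimizu2019relative,shibata2021modified}, the condition that the distinguished invertible object of $\Rex_{\Rep(H)}(\Rep(A))$ be trivial. Roughly, a unimodular element of $A$ is an element (or one-dimensional datum) $a\in A$ that simultaneously ``absorbs'' the modular function of $A$ (a relative analogue of a cointegral/modular element for the comodule algebra) and the distinguished grouplike element $\chi\in H$ coming from the non-unimodularity of $H$ itself. The strategy is: for each comodule algebra $A$ in the classification, compute its modular datum and show that the equation defining a unimodular element forces a relation among powers of $\omega$ (equivalently, among characters of $\zZ/n$) that has no solution. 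The obstruction to solvability is traceable to the fact that $T(\omega)$ is not unimodular --- its distinguished grouplike $\chi$ is a \emph{nontrivial} power of $g$ --- combined with the fact that the grouplike group of $T(\omega)$ is cyclic and acts on the one-dimensional skew-primitive space $\kk x$ by the character $\omega\neq 1$, so there is ``not enough room'' to cancel $\chi$ by an inner/comodule adjustment.

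The main obstacle I anticipate is organizing the case analysis over all exact $H$-comodule algebras cleanly and making sure the classification is being invoked correctly --- in particular, handling the comodule algebras that are \emph{not} of the form $\kk^G$-twisted-group-algebra type but genuinely involve the nilpotent generator $x$ (these correspond to the ``non-pointed'' or ``non-group-theoretical'' module categories and are where a naive argument might slip). A clean way to package this is to first reduce to the two extreme families: (i) $A$ with trivial $x$-part, where $A$ is a crossed product of a subgroup of $\zZ/n$ with a cocycle, and the unimodular-element equation becomes a character equation on that subgroup that visibly fails because $\chi$ restricts nontrivially; and (ii) $A$ involving $x$, where an analogous but slightly more involved computation of the relative modular function (using that $x$ is $(1,g)$-skew-primitive) again produces the offending $\omega$-power. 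Once both families are dispatched, Theorem~D immediately gives that $\Rep(T(\omega))$ has no unimodular module category, and we are done. A desirable byproduct of this argument, worth noting in the paper, is that since unimodular module categories are Morita-invariant in an appropriate sense, this simultaneously shows the categorical Morita equivalence class of $\Rep(T(\omega))$ contains no unimodular tensor category, recovering the last claim of the abstract.
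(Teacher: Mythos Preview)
Your plan is correct and matches the paper's approach: invoke the classification of indecomposable exact $T(\omega)$-comodule algebras (Mombelli) and then verify case-by-case, via Theorem~D, that none admits a unimodular element. One refinement worth noting: the paper does not work with the raw Definition~\ref{defn:uniElement} but first observes (via \cite[\S5.1]{shimizu2019relative}) that every comodule algebra on Mombelli's list carries a \emph{grouplike cointegral}, so the simpler criterion of Corollary~\ref{cor:grouplikecointegral} applies --- the element $\Im$ collapses to $g_A^{-2}g_H\otimes 1_A$ and the unimodular-element condition becomes (i) $\tilde\nu$ is inner and (ii) a single comodule identity. The actual obstruction in every case is already condition (i): the paper computes $\tilde\nu$ explicitly on generators and shows, by elementary arguments (a character test, a module comparison, or commutativity), that $\tilde\nu$ is never inner. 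Your sketch anticipates a ``character equation that visibly fails,'' which is exactly what happens, but you should be aware that the $A_1(d,\xi)$ family with $\xi\neq 0$ requires a slightly less visible argument (constructing a module on which $X^d$ acts by different scalars before and after twisting by $\tilde\nu$).
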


We end this article with some remarks and questions in Section~\ref{subsec:remarks}.


\subsection{Acknowledgements}
The author would like to thank Chelsea Walton for her guidance and support throughout the project. We also thank Anh-Tuong Nguyen and Kenichi Shimizu for helpful correspondences. The author is partially supported by a Nettie S.~Autrie Research Fellowship from Rice University.


\section{Background on monoidal categories}\label{sec:background}
In this article, we work over an algebraically closed field $\kk$.
We review monoidal categories and module categories over them in Section~\ref{subsec:1mon}, rigid and pivotal categories in Section~\ref{subsec:2pivotal}, finite tensor categories and their module categories in Section~\ref{subsec:5inthom}, algebras in monoidal categories in Section~\ref{subsec:3alg}, and Drinfeld centers in Section~\ref{subsec:9drinfeld}. 
We refer the reader to the textbooks \cite{mac2013categories}, \cite{etingof2016tensor} and \cite{turaev2017monoidal} for further details.

\subsection{Monoidal categories and module categories}\label{subsec:1mon}
\subsubsection{Monoidal categories}
A category $\C$ equipped with a functor $\otimes: \C\times\C\rightarrow \C$ (called the \textit{tensor product}), an object $\unit\in \C$ (called the \textit{unit object}) and natural isomorphisms
\begin{equation}\label{eq:monoidalCat}
    X\otimes(Y \otimes Z) \cong (Z\otimes Y)\otimes Z,\;\; X\otimes \unit \cong X \cong \unit \otimes X , \;\;\;\; 
\text{for all}\; X,Y,Z\in \C,
\end{equation}  
is called a \textit{monoidal category} if the isomorphisms in (\ref{eq:monoidalCat}) satisfy the pentagon and the triangle axioms. If these isomorphisms are identities, we call $\C$ a \textit{strict} monoidal category. By Mac Lane's coherence theorem \cite[VII.2]{mac2013categories}, we can (and will) assume that all monoidal categories are strict.
We let $\C^{\rev}$ denote the category $\C$ with the opposite tensor product $\otimes^{\rev}$, that is, $X\otimes^{\rev}Y :=Y\otimes X$. We denote the opposite category of $\C$ as $\C^{\op}$. Then both $(\C^{\op},\otimes,\unit)$ and $(\C^{\rev},\otimes^{\rev},\unit)$ are monoidal categories. 
 
\smallskip

A \textit{braided monoidal category} is a monoidal category $(\C,\otimes,\unit)$ equipped with a natural isomorphism $c=\{c_{X,Y}:X\otimes Y\rightarrow Y\otimes X \}_{X,Y\in\C}$ (called a \textit{braiding}) satisfying the hexagon axiom. The \textit{mirror} $c'$ of a braiding $c$ on $\C$ is defined by $c'_{X,Y}:=c_{Y,X}^{-1}$. We will let $\C^{\mir}$ denote the braided monoidal category $(\C,\otimes,\unit,c')$. 


\subsubsection{Monoidal functors} Let $(\C,\otimes_{\C},\unit_{\C})$ and $(\D,\otimes_{\D},\unit_{\D})$ be two monoidal categories. 

A \textit{monoidal functor} from $\C$ to $\D$ is a tuple $(F,F_2,F_0)$ consisting of a functor $F:\C\rightarrow \D$, a natural transformation $F_2 = \{F_2(X,Y):F(X)\otimes_{\D} F(Y) \rightarrow F(X\otimes_{\C} Y)  \}_{X,Y\in \C}$ and a morphism $F_0:\unit_{\D} \rightarrow F(\unit_{\C}) $ such that certain compatibility conditions are satisfied. We call a monoidal functor $(F,F_2,F_0)$ \textit{strong} (resp., \textit{strict}) if $F_2$ and $F_0$ are isomorphisms (resp., identity maps) in $\D$. If $F$ is strong monoidal and an equivalence between the underlying categories, we call it a \textit{monoidal equivalence}.

\smallskip

A \textit{comonoidal functor} from $\C$ to $\D$ is a tuple $(F,F^2,F^0)$ consisting of a functor  $F:\C\rightarrow \D$, a natural transformation $F^2 = \{F^2(X,Y): F(X\otimes_{\C} Y)  \rightarrow F(X)\otimes_{\D} F(Y) \}_{X,Y\in \C}$ and a morphism $F^0: F(\unit_{\C}) \rightarrow \unit_{\D} $ such that $(F^{\op},(F^2)^{\op},(F^0)^{\op}): \C^{\op}\rightarrow \D^{\op}$ is a monoidal functor.

\smallskip

A \textit{Frobenius monoidal functor} \cite[Definition~1]{day2008note} is a tuple $(F:\C\rightarrow \D,F_0,F_2,F^2,F^0)$ where $(F,F_2,F_0)$ is a monoidal functor, $(F,F^2,F^0)$ is a comonoidal functor and for all $X,Y,Z \in \C$, the following holds:
\begin{align*}
(\id_{F(X)}\otimes_{\D} F_2(Y,Z) )\;(F^2(X,Y)\otimes_{\D} \id_{F(Z)}) &  = \, F^2(X,Y\otimes_{\C} Z) F_2(X\otimes_{\C} Y,Z), \\
(F_2(X,Y)\otimes_{\D} \id_{F(Z)})\; ( \id_{F(X)}\otimes_{\D} F^2(Y,Z)) & =  \, F^2(X\otimes_{\C} Y, Z) F_2(X, Y \otimes_{\C} Z). 
\end{align*}
If the categories $\C,\,\D$ are $\kk$-linear, then, such a functor $F$ is called \textit{separable} if it satisfies that $F_2(X,Y)\circ F^2(X,Y)=\beta_2\; \id_{F(X\otimes Y)}$ for some $\beta_2\in\kk^{\times}$. If in addition, $F^0\circ F_0 = \beta_0 \; \id_{\unit}$ holds for some $\beta_0\in\kk^{\times}$, we call $F$ \textit{special}.

\subsubsection{$\C$-Module categories}  
Let $\C$ be a monoidal category. A \textit{left $\C$-module category} is a category $\M$ equipped with a functor $\tr:\C\times \M\rightarrow \M$ (called the \textit{action} of $\C$) and natural isomorphisms
\[(X\otimes Y)\tr M \cong X\tr(Y\tr M),\;\; \unit\tr M \cong M\;\;\;\;\;\; \text{for all} \;X,Y\in \C, M\in \M \] 
satisfying certain coherence conditions.
Analogously, one can define right $\C$-module categories and bimodule categories.
By a variant of Mac Lane's coherence theorem (see \cite[Remark~7.2.4]{etingof2016tensor}), we can (and will) assume that the above isomorphisms are identity maps.

\subsubsection{$\C$-module functors}
Let $(\M,\tr_{\M})$ and $(\N,\tr_{\N})$ be left $\C$-module categories. A \textit{left $\C$-module functor} is a tuple $(F,s)$ where:
\begin{itemize}
    \item $F:\M\rightarrow\N$ is a functor, and
    \item $s= \{s_{X,M}: F(X\tr_{\M} M)  \rightarrow  X\tr_{\N} F(M) \}_{X\in\C,M\in\M}$ is a natural isomorphism satisfying
    \[ s_{X\otimes Y,M} = (\id_X \tr_{\N} s_{Y,M})\circ  \; s_{X,Y\tr M},  \;\;\;\;\; s_{\unit,M} = \id_{F(M)}  \hspace{1.5cm} (X,Y\in\C, M\in \M).\]
\end{itemize}
One can define right $\C$-module functors and bimodule functors in a similar manner.

\begin{example}[${}_F\M,\tr_F$]\label{ex:twisted}
Let $\C,\,\D$ be monoidal categories and $(\M,\tr)$ a left $\D$-module category. Given a strong monoidal functor $F:\C\rightarrow\D$, we will denote by $({}_F\M,\tr_F)$ the category $\M$ with $\C$-action given by
    \begin{equation*}
        X \tr_F M: =F(X)\tr M , \hspace{1cm} (X\in \C,M\in \M).
    \end{equation*} 
    Then $({}_F\M,\tr_F)$ is a left $\C$-module category. In this case, we call $\M$ a {\it ($F$-)twisted left $\C$-module category}. 
\end{example}

\medskip
Let $(F,s^F),(G,s^G): (\M,\tr_{\M}) \rightarrow (\N,\tr_{\N})$ be left $\C$-module functors. A \textit{left $\C$-module natural transformation} is a natural transformation $\eta: F\rightarrow G$ satisfying
\begin{equation*}\label{eq:modulenattrans}
    (\id_X \tr_{\N} \eta_M) \circ s^F_{X,M} = s^G_{X,M} \circ \eta_{X\tr_{\M} M}    \hspace{1.5cm} (X\in\C, M\in \M).
\end{equation*}


\subsection{Duality in monoidal categories}\label{subsec:2pivotal}
A monoidal category is called \textit{rigid} if every object $X$ in $\C$ comes equipped with a left and right dual, i.e., there exist an object $\lv X$ (\textit{left dual}) along with co/evaluation maps $\ev_X:\lv X\otimes X\rightarrow \unit$, $\coev_X:\unit\rightarrow X\otimes \lv X$ and an object $X\rv$ (\textit{right dual}) with co/evaluation maps $\widetilde{\ev}_X:X\otimes X\rv\rightarrow \unit$, $\widetilde{\coev}_X: \unit\rightarrow X\rv \otimes X$ satisfying the usual snake relations.

\smallskip

The maps $X\mapsto \lv X$ and $X \mapsto X\rv$ extend to monoidal equivalences from $\C^{\rev}$ to $\C^{\op}$ . 
We can and will replace $\C$ by an equivalent monoidal category and choose duals in a suitable way to ensure that $\lv(-)$ and $(-)\rv$ are strict monoidal and mutually inverse to each other, \cite[Lemma~5.4]{shimizu2015pivotal}. 


\subsubsection{Pivotal categories}

A monoidal category with left duals is called \textit{pivotal} if it comes equipped with a natural isomorphism $\fp=\{\fp_X:X\rightarrow \lv\lv X\}_{X\in\C}$ satisfying
$\fp_{X\otimes Y} = \fp_X\otimes \fp_Y$.
In a pivotal category, for each object $X$, $\lv X$ is also a right dual to $X$ with co/evaluation maps 
\begin{equation*}\label{eq:pivtilde}
    \widetilde{\coev}_X := (\id_{\lv X}\otimes \fp_X^{-1})\coev_{\lv X} \;\;\;\; \text{and} \;\;\;\; \widetilde{\ev}_X := \ev_{\lv X} (\fp_X \otimes \id_{\lv X}).
\end{equation*}
The (quantum) dimension of an object $X\in\C$ (with respect to a pivotal structure $\fp$) is defined as the following endomorphism of the unit object.
\begin{equation*}\label{eq:qdim}
    \dim(X):=\dim_{\C}^{\fp}(X) = \tev_X\circ \coev_X \in \End(\unit_{\C}).
\end{equation*}
If $\C$ is $\kk$-linear and $\End(\unit_{\C})\cong\kk$, then $\dim(X)$ is a scalar.


\subsubsection{Pivotal Frobenius functors}
Suppose that we have a Frobenius monoidal functor $(F,F_2,F_0,F^2,F^0)$ between rigid monoidal categories $\C,\; \D$. Then by \cite[Theorem~2]{day2008note}, $F(\lv X)$ is a left dual of $F(X)$ for $X\in\C$.
Thus, by uniqueness of dual objects, we get a unique family of natural isomorphisms $\zeta^F_X: F(\lv X) \rightarrow \lv F(X)$. Generalizing the definition of pivotal functors from \cite{ng2007higher} gives the following definition of pivotal Frobenius functors.

\begin{definition}
A Frobenius monoidal functor $F:\C\rightarrow\D$ between pivotal categories is said to be \textit{pivotal} if it satisfies 
$\lv(\zeta^F_X) \circ \fp^{\D}_{F(X)} = \zeta^F_{\lv X} \circ F(\fp^{\C}_X)$ for all  $X\in\C$. 
\end{definition}

\begin{lemma}\label{lem:pivShimizuB}\cite[Lemma~2.7]{yadav2022frobenius}
Let $\C\xrightarrow{F}\D\xrightarrow{G}\E$ be a sequence of Frobenius monoidal functors between rigid monoidal categories. If $\C,\; \D, \; \E$ are pivotal and $F,\; G $ are pivotal functors, then so is their composition $G\circ F$.  \qed
\end{lemma}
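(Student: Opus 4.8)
The plan is to reduce everything to a single compatibility identity relating the canonical isomorphisms $\zeta^{F}$, $\zeta^{G}$, $\zeta^{G\circ F}$, and then to run a formal diagram chase. First I would record that $G\circ F$ is again a Frobenius monoidal functor: its monoidal structure is $(G\circ F)_2(X,Y)=G(F_2(X,Y))\circ G_2(F(X),F(Y))$ and $(G\circ F)_0=G(F_0)\circ G_0$, its comonoidal structure is $(G\circ F)^2(X,Y)=G^2(F(X),F(Y))\circ G(F^2(X,Y))$ and $(G\circ F)^0=G^0\circ G(F^0)$, and the two Frobenius compatibility identities for $G\circ F$ follow from those for $F$ and for $G$ together with naturality of $G_2$ and $G^2$. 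By \cite[Theorem~2]{day2008note} we then get the canonical natural isomorphism $\zeta^{G\circ F}_X\colon GF(\lv X)\to \lv GF(X)$.

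The key step is the identity
\[
\zeta^{G\circ F}_X \;=\; \zeta^{G}_{F(X)}\circ G\big(\zeta^{F}_X\big)\qquad(X\in\C).
\]
To prove it, recall from \cite{day2008note} that for a Frobenius monoidal functor $H$ the evaluation exhibiting $H(\lv X)$ as a left dual of $H(X)$ is $H^0\circ H(\ev_X)\circ H_2$, and that $\zeta^H_X$ is uniquely characterised by $\ev_{H(X)}\circ(\zeta^H_X\otimes\id_{H(X)})=H^0\circ H(\ev_X)\circ H_2$. Substituting the composite structure maps of $G\circ F$ above, rewriting the inner block $F^0\circ F(\ev_X)\circ F_2$ as $\ev_{F(X)}\circ(\zeta^F_X\otimes\id_{F(X)})$, and then moving $G(\zeta^F_X\otimes\id)$ across $G_2$ by naturality, one rewrites $(G\circ F)^0\circ GF(\ev_X)\circ(G\circ F)_2$ as $\ev_{GF(X)}\circ\big((\zeta^G_{F(X)}\circ G(\zeta^F_X))\otimes\id_{GF(X)}\big)$; the uniqueness clause then yields the displayed identity. (The same conclusion can be reached with coevaluations.) This computation is elementary, but it is the one genuine verification the argument needs; I expect it to be the main obstacle, mostly because one has to be careful about which concrete form of the induced duality data from \cite{day2008note} one uses.

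Finally, pivotality of $G\circ F$ is a formal consequence. Using the displayed identity, contravariance of $\lv(-)$, the pivotality of $G$ at $F(X)$, the naturality of $\zeta^G$ evaluated at the morphism $\zeta^F_X\colon F(\lv X)\to\lv F(X)$, the pivotality of $F$ at $X$, and once more the displayed identity at $\lv X$, one computes
\begin{align*}
\lv\big(\zeta^{G\circ F}_X\big)\circ \fp^{\E}_{GF(X)}
&= \lv\big(G(\zeta^F_X)\big)\circ \lv\big(\zeta^G_{F(X)}\big)\circ \fp^{\E}_{GF(X)}\\
&= \lv\big(G(\zeta^F_X)\big)\circ \zeta^G_{\lv F(X)}\circ G\big(\fp^{\D}_{F(X)}\big)\\
&= \zeta^G_{F(\lv X)}\circ G\big(\lv(\zeta^F_X)\big)\circ G\big(\fp^{\D}_{F(X)}\big)\\
&= \zeta^G_{F(\lv X)}\circ G\big(\lv(\zeta^F_X)\circ \fp^{\D}_{F(X)}\big)\\
&= \zeta^G_{F(\lv X)}\circ G\big(\zeta^F_{\lv X}\circ F(\fp^{\C}_X)\big)\\
&= \zeta^G_{F(\lv X)}\circ G\big(\zeta^F_{\lv X}\big)\circ GF\big(\fp^{\C}_X\big)\\
&= \zeta^{G\circ F}_{\lv X}\circ GF\big(\fp^{\C}_X\big),
\end{align*}
which is exactly the pivotality condition for $G\circ F$. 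The only point requiring care here is bookkeeping the variances in the naturality square for $\zeta^G$, since the two relevant functors $\D\to\E$, namely $Y\mapsto G(\lv Y)$ and $Y\mapsto \lv(GY)$, are both contravariant; everything else is formal manipulation.
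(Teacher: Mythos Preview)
Your argument is correct. The paper does not supply its own proof of this lemma; it simply quotes the result from \cite[Lemma~2.7]{yadav2022frobenius} and appends a \qed, so there is nothing to compare against here. Your proof is precisely the standard one: establish the composition formula $\zeta^{G\circ F}_X=\zeta^{G}_{F(X)}\circ G(\zeta^{F}_X)$ via the uniqueness of comparison isomorphisms between duals, and then run the evident diagram chase using pivotality of $F$ and $G$ together with naturality of $\zeta^G$. Both steps are carried out carefully and the bookkeeping of variances is handled correctly.
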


Next, we see how the dimension of an object $X$ in $\C$ is related to the dimension of $F(X)$ in $\D$ when $F$ is a pivotal Frobenius monoidal functor.
\begin{lemma}\label{lem:dimFrob0}
Let $F:(\C,\fp)\rightarrow(\D,\fq)$ be a pivotal Frobenius monoidal functor. Then 
\begin{equation}\label{eq:dimFrob}
    \dim^{\fq}_{\D}(F(X)) = F^0\; F(\tev_X)\;  F_2(X, \lv X) \; F^2(X,\lv X) \; F(\coev_X) \; F_0 .
\end{equation}
\end{lemma}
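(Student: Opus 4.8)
The plan is to reduce the asserted formula to the definition $\dim^{\fq}_{\D}(F(X)) = \tev_{F(X)}\circ \coev_{F(X)}$ and then transport the (co)evaluation data of the \emph{chosen} left dual $\lv F(X)$ in $\D$ back along the canonical comparison isomorphism $\zeta := \zeta^F_X\colon F(\lv X)\to \lv F(X)$. Recall from \cite[Theorem~2]{day2008note} that $F(\lv X)$ is a left dual of $F(X)$ with evaluation $e_X := F^0\, F(\ev_X)\, F_2(\lv X, X)$ and coevaluation $c_X := F^2(X,\lv X)\, F(\coev_X)\, F_0$, and that $\zeta$ is the unique isomorphism with $\ev_{F(X)}\circ(\zeta\otimes\id_{F(X)}) = e_X$ and $(\id_{F(X)}\otimes\zeta)\circ c_X = \coev_{F(X)}$. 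The second relation gives $\coev_{F(X)} = (\id_{F(X)}\otimes\zeta^F_X)\, F^2(X,\lv X)\, F(\coev_X)\, F_0$, so the lemma reduces to the identity
\[
(\star)\qquad \tev_{F(X)}\circ(\id_{F(X)}\otimes\zeta^F_X) \;=\; F^0\, F(\tev_X)\, F_2(X,\lv X),
\]
after which one simply composes the two expressions.

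To establish $(\star)$, I would unfold $\tev_{F(X)} = \ev_{\lv F(X)}\circ(\fq_{F(X)}\otimes\id_{\lv F(X)})$, so that the left-hand side becomes $\ev_{\lv F(X)}\circ(\fq_{F(X)}\otimes\zeta^F_X)$. Then move $\zeta^F_X$ to the other tensor factor using the dinaturality of evaluation — for $f\colon A\to B$ one has $\ev_{B}\circ(\id_{\lv B}\otimes f) = \ev_{A}\circ(\lv f\otimes \id_{A})$ — which turns the left-hand side into $\ev_{F(\lv X)}\circ\big((\lv(\zeta^F_X)\circ\fq_{F(X)})\otimes\id_{F(\lv X)}\big)$. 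Now invoke pivotality of $F$, i.e.\ $\lv(\zeta^F_X)\circ\fq_{F(X)} = \zeta^F_{\lv X}\circ F(\fp_X)$, to rewrite it as $\ev_{F(\lv X)}\circ(\zeta^F_{\lv X}\otimes\id_{F(\lv X)})\circ(F(\fp_X)\otimes\id_{F(\lv X)})$. Applying the defining relation of $\zeta^F$ at the object $\lv X$ identifies $\ev_{F(\lv X)}\circ(\zeta^F_{\lv X}\otimes\id_{F(\lv X)})$ with $e_{\lv X} = F^0\, F(\ev_{\lv X})\, F_2(\lv\lv X,\lv X)$; finally naturality of $F_2$ slides $F(\fp_X)$ past $F_2$ and the definition $\tev_X = \ev_{\lv X}\circ(\fp_X\otimes\id_{\lv X})$ closes the computation.

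Putting the pieces together,
\[
\dim^{\fq}_{\D}(F(X)) = \tev_{F(X)}\circ\coev_{F(X)} = F^0\, F(\tev_X)\, F_2(X,\lv X)\, F^2(X,\lv X)\, F(\coev_X)\, F_0,
\]
which is the claim. The substantive step is $(\star)$: none of the individual moves is deep, but care is needed to apply the dinaturality of evaluation and the pivotality relation to the correct tensor factor and in the right order; rendering the argument in string diagrams makes this bookkeeping transparent. Everything else is unwinding definitions together with one application of naturality of $F_2$.
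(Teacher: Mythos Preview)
Your proof is correct and is precisely the ``straightforward calculation using the definition of $\zeta^F_X$'' that the paper alludes to without spelling out. You have unpacked exactly the steps needed: expressing $\coev_{F(X)}$ via $\zeta^F_X$ and the Frobenius data, then establishing $(\star)$ by dinaturality of evaluation together with the pivotality relation for $F$ and naturality of $F_2$.
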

\begin{proof}
This follows from a straightforward calculation using the definition of $\zeta^F_X$.
\end{proof}

\begin{lemma} \label{lem:dimPiv}
Let $(F,F_2,F_0,F^2,F^0):\C\rightarrow\D$ be a pivotal, special, Frobenius monoidal functor (with constants $\beta_0,\beta_2$) between pivotal monoidal categories $(\C,\fp)$ and $(\D,\fq)$. If $\End(\unit_{\C})\cong \kk$, then,
\begin{equation*}
    \dim^{\fq}_{\D}(F(X)) = \beta_2 \beta_0 \dim^{\fp}_{\C}(X)\, \id_{\unit_{\D}}.
\end{equation*}
\end{lemma}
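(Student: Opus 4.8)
The plan is to combine Lemma~\ref{lem:dimFrob0} with the two defining constants of a special Frobenius monoidal functor, using the hypothesis $\End(\unit_\C)\cong\kk$ to replace composites of structure maps by scalars. The key point is that a pivotal special Frobenius monoidal functor has, by definition, $F_2(X,Y)\circ F^2(X,Y)=\beta_2\,\id_{F(X\otimes Y)}$ for all $X,Y$ and $F^0\circ F_0=\beta_0\,\id_{\unit_\D}$, and these are exactly the composites that appear in formula~\eqref{eq:dimFrob}, once we insert them in the correct order.

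First I would start from the expression
\[
\dim^{\fq}_{\D}(F(X)) = F^0\; F(\tev_X)\;  F_2(X, \lv X) \; F^2(X,\lv X) \; F(\coev_X) \; F_0
\]
given by Lemma~\ref{lem:dimFrob0}. The middle two factors $F_2(X,\lv X)\circ F^2(X,\lv X)$ are a composite of the form appearing in the separability/speciality condition with $Y=\lv X$, so they equal $\beta_2\,\id_{F(X\otimes \lv X)}$. Substituting and pulling the scalar $\beta_2$ out (which is legitimate since everything is $\kk$-linear and a scalar is central), the expression becomes $\beta_2\cdot F^0\; F(\tev_X)\; F(\coev_X)\; F_0$. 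Next I would use that $F$ is a functor, so $F(\tev_X)\circ F(\coev_X)=F(\tev_X\circ\coev_X)=F(\dim^{\fp}_\C(X))$. Since $\End(\unit_\C)\cong\kk$, the dimension $\dim^{\fp}_\C(X)$ is a scalar $\lambda\in\kk$ times $\id_{\unit_\C}$, hence $F(\dim^\fp_\C(X))=\lambda\,\id_{F(\unit_\C)}$, and so $F^0\circ F(\dim^\fp_\C(X))\circ F_0=\lambda\cdot(F^0\circ F_0)=\lambda\beta_0\,\id_{\unit_\D}$. Assembling the pieces gives $\dim^\fq_\D(F(X))=\beta_2\beta_0\lambda\,\id_{\unit_\D}=\beta_2\beta_0\dim^\fp_\C(X)\,\id_{\unit_\D}$, as claimed.

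I do not anticipate a real obstacle here; the only thing to be careful about is the bookkeeping of composition order and the fact that inserting $\beta_2\,\id_{F(X\otimes\lv X)}$ in the middle is valid precisely because the domain of $F_2(X,\lv X)$ and the codomain of $F^2(X,\lv X)$ are both $F(X\otimes\lv X)$, so the identity morphism there can be absorbed. One should also note that the statement implicitly uses $\End(\unit_\C)\cong\kk$ only to make sense of "$\dim^\fp_\C(X)$ times $\id_{\unit_\D}$"; without it the same computation shows $\dim^\fq_\D(F(X))=\beta_2\beta_0\,F^0\circ F(\dim^\fp_\C(X))\circ F_0$, but identifying this with $\beta_2\beta_0\dim^\fp_\C(X)$ requires the scalar hypothesis so that $F^0$ and $F_0$ interact with the central scalar cleanly.
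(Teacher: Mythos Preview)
Your proof is correct and follows essentially the same approach as the paper's own proof: start from the formula in Lemma~\ref{lem:dimFrob0}, collapse $F_2(X,\lv X)\circ F^2(X,\lv X)$ to $\beta_2\,\id$, use functoriality to get $F(\dim^{\fp}_\C(X))$, extract the scalar via $\End(\unit_\C)\cong\kk$, and finish with $F^0\circ F_0=\beta_0\,\id_{\unit_\D}$. Your additional remarks on bookkeeping and on what survives without the scalar hypothesis are accurate and slightly more detailed than the paper's terse computation.
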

\begin{proof}
The following calculation proves the result. 
\begin{align*}
\dim^{\fq}_{\D}(F(X))  \stackrel{(\textnormal{\ref{eq:dimFrob}})}{=} & F^0\; F(\tev_X)\;  F_2(X, \lv X) \; F^2(X,\lv X) \; F(\coev_X) \; F_0 \\
= \;\;& F^0 \; F( \tev_X) \; (\beta_2 \id_{F(X\otimes \lv X)}) \; F(\coev_X) \; F_0  \\
= \;\;& \beta_2 \; F^0 \; F( \tev_X \circ  \coev_X) \; F_0 \\
= \;\;& \beta_2 \dim^{\fp}_{\C}(X)\; F^0\; F_0 \, \id_{\unit_{\D}} \\
= \;\;& \beta_2 \beta_0 \; \dim^{\fp}_{\C}(X)\, \id_{\unit_{\D}}. \qedhere  
\end{align*}
\end{proof}


\subsection{Finite multitensor categories and their module categories}\label{subsec:5inthom}
For a $\kk$-algebra $A$, let $\Rep(A)$ denote the category of finite dimensional left $A$-modules over $\kk$. A \textit{finite abelian category} is a $\kk$-linear category that is equivalent to $\Rep(A)$ for some finite dimensional $\kk$-algebra $A$.

\smallskip

A \textit{finite multitensor category} is a rigid monoidal category that is finite abelian and the tensor product functor $\otimes$ is $\kk$-linear in each variable. If further the unit object $\unit$ is simple, we call it a \textit{finite tensor category}. 
A \textit{tensor functor} is an exact, faithful, $\kk$-linear, strong monoidal functor between finite multitensor categories. 

\smallskip

Let $\C$ be a finite multitensor category. A \textit{finite left $\C$-module category} is a left $\C$-module category $\M$ such that $\M$ is a finite abelian category and the action of $\C$ on $\M$ is $\kk$-bilinear and right exact in each variable. 
Such a category is called \textit{exact} if for all objects $M\in \M$ and all projective objects $P\in \C$, $P\tr M$ is projective in $\M$. A module category is called \textit{indecomposable} if it is not equivalent to a direct sum of two non-trivial module categories.

\begin{notation}
We will use the following notations going forward.
\begin{enumerate}
\item For $\M,\N$ two finite left $\C$-module categories, $\Rex_{\C}(\M,\N)$ will denote the category of right exact $\C$-module functors from $\M$ to $\N$. If $\M=\N$, we call it $\Rex_{\C}(\M)$. 
\item Given $(F,s^F),(G,s^G)$ two left $\C$-module functors, we will use the notation $F\cong_{\C}G$ to mean that $F$ and $G$ are isomorphic as left $\C$-module functors. 
\item For $X\in \C$ and $F\in \Rex_{\C}(\M,\N)$, we use the notation $X\tr F$ to denote the functor from $\M$ to $\N$ defined as follows: $(X\tr F)(M)=X\tr F(M)$.
\end{enumerate}
\end{notation}

\subsubsection{Internal Homs}
We refer the reader to \cite[Section~7.4]{etingof2016tensor} for a more detailed exposition. Let $\C$ be a monoidal category and $(\M, \tr)$ be a left $\C$-module category. Consider the functor
\begin{equation*}
    Y_M:\C\rightarrow \M, \hspace{1cm} X\mapsto X\tr M.
\end{equation*}
If $Y_M$ admits a right adjoint, we will denote it by $\uHom(M,-):=\uHom^{\C}_{\M}(M,-):\M\rightarrow \C$ and call $\uHom(M,N)$ as the \textit{internal Hom} of $M$ and $N$. 
In this case, by definition of adjoint functors, we get the following isomorphism of hom spaces:
\begin{equation}\label{eq:internalHom}
    \Hom_{\M}(X\tr M,N) \cong \Hom_{\C}(X,\uHom(M,N))
\end{equation}
For $\C$ a finite multitensor category and $\M$ a finite left $\C$-module category, the functor $Y_M$ is a right exact functor between finite abelian categories. Hence, it admits a right adjoint, that is, internal Homs exist. 
In fact, the internal Hom extends to a functor $\uHom: \M^{\op}\times \M \rightarrow \C$ in such a way that the isomorphism (\ref{eq:internalHom}) is natural in both $M$ and $N$ \cite[\S2.4]{shimizu2020further}.


\subsubsection{Canonical $\Vect$ action}
Let $\Vect$ denote the category of finite dimensional $\kk$-vector spaces.
Every finite abelian category $\M$ comes equipped with a canonical action of $\Vect$ given by $\btr: \Vect\times \M \rightarrow \M$ defined via the following isomorphism. 
\begin{equation*}
    \Hom_{\M}(V\btr M,N)\cong \Homk(V,\Hom_{\M}(M,N))
\end{equation*} 
With this action, every finite abelian category becomes a left $\Vect$-module category. From here on, we will use the notation $\btr$ to denote this action of $\Vect$ on any finite abelian category.


\subsubsection{Relative Serre functors}\label{subsec:Serre}
Let $\C$ be a finite multitensor category and $\M$ a finite  left $\C$-module category. A \textit{relative (right) Serre functor} \cite[Definition~4.22]{fuchs2020eilenberg} of $\M$ is a pair $(\dS,\phi)$ where $\dS:=\dS^{\C}_{\M}:\M\rightarrow \M $ is a functor and 
\begin{equation*}\label{eq:SerreFunctor}
    \phi:=\phi^r =\{\phi^r_{M,M'}: \lv \uHom(M,M')\rightarrow \uHom(M',\dS(M))  \}_{M,M'\in\M}
\end{equation*}
is a natural isomorphism. If we want to emphasize the categories $\C,\M$, we will write $\dS^{\C}_{\M}$ instead of $\dS$.
Similarly, a \textit{relative (left) Serre functor} is an endofunctor $\overline{\dS}$ of $\M$ together with a natural isomorphism 
\[\phi^l=\{\phi^l_{M,M'}:  \uHom(M,M')\lv \rightarrow \uHom(\overline{\dS}(M'),M)\}_{M,M'\in\M}.\] 
We refer the reader to the works \cite{schaumann2015pivotal,fuchs2020eilenberg,shimizu2019relative} for further details.
Below, we recall some important results about Serre functors.

\begin{theorem}\cite[Lemma~4.23, Proposition~4.24]{fuchs2020eilenberg}\label{thm:Serre}
Let $\C$ be a finite tensor category and $\M$ a finite left $\C$-module category.
\begin{enumerate}
\item[\upshape(a)] $ \M$ is exact if and only if a relative Serre functor $\dS$ exists.
\item[\upshape(b)] If $\M$ is exact, $\dS$ is a category equivalence with quasi-inverse $\overline{\dS}$.
\item[\upshape(c)] If $\M$ is exact, $\dS$ is a $\C$-module functor from $\M\rightarrow {}_{\lv\lv(-)}\M$, that is, we have natural isomorphisms $\fs_{X,M}: \dS(X\tr M) \xrightarrow{\sim}  \lv \lv X \tr \dS(M) $ satisfying the module compatibility conditions. \qed
\end{enumerate}
\end{theorem}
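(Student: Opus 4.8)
The plan is to extract the relative Serre functor from the universal property of internal Homs by a representability argument, reducing the whole statement to the exactness of the functors $\uHom(M,-)$. First I would record the tools. From rigidity one has, on $\M$, the adjunctions $\lv X\tr-\dashv X\tr-\dashv X\rv\tr-$, which together with $(-\tr M)\dashv\uHom(M,-)$ and Yoneda in $\C$ give the projection formulas
\[
\uHom(X\tr M,N)\;\cong\;\uHom(M,N)\otimes\lv X,\qquad
\uHom(M,X\tr N)\;\cong\;X\otimes\uHom(M,N)\qquad(X\in\C,\ M,N\in\M).
\]
One also has the elementary rigidity identities $\Hom_\C(X,\lv Y)\cong\Hom_\C(X\otimes Y,\unit)$ and $\Hom_\C(\lv Z,\unit)\cong\Hom_\C(\unit,Z)\cong\Hom_\C(Z\rv,\unit)$, and the ``$\kk$-Yoneda principle'': since $\Hom_\C(\unit,\uHom(M,N))\cong\Hom_\M(M,N)$, a natural isomorphism $\uHom(-,N)\cong\uHom(-,N')$ forces $N\cong N'$. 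Applying $\Hom_\C(X,-)$ to the sought isomorphism $\phi^r_{M,M'}\colon\lv\uHom(M,M')\xrightarrow{\sim}\uHom(M',\dS M)$ and simplifying with the above shows that a relative (right) Serre functor $(\dS,\phi^r)$ is the same datum as an endofunctor $\dS$ of $\M$ together with an isomorphism
\[
\Hom_\M(N,\dS M)\;\cong\;\Hom_\C\!\big(\uHom(M,N),\unit\big),
\]
natural in $M$ and $N$; equivalently, $\dS M$ represents the $\kk$-linear functor $T_M:=\Hom_\C(\uHom(M,-),\unit)\colon\M^{\op}\to\Vect$.

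For part (a), the decisive input is the criterion: \emph{$\M$ is exact if and only if $\uHom(M,-)\colon\M\to\C$ is exact for every $M$}. This is short: $\uHom(M,-)$, being a right adjoint, is always left exact; and since epimorphisms are detected by projectives, $\uHom(M,-)$ preserves epimorphisms iff $\Hom_\M(P\tr M,N)\to\Hom_\M(P\tr M,N'')$ is surjective for every projective $P\in\C$ and every epimorphism $N\twoheadrightarrow N''$, which holds for all such data iff $P\tr M$ is projective in $\M$ for all $M$ and all projective $P$. Granting this: if $\M$ is exact then $\uHom(M,-)$ is exact, so $T_M=\Hom_\C(-,\unit)\circ\uHom(M,-)$ is a $\kk$-linear, left exact, finite-dimensional-valued functor $\M^{\op}\to\Vect$, hence representable by an object $\dS M$ (representability of left exact functors on finite abelian categories); functoriality of $\dS$ is forced by uniqueness of representing objects, and $\phi^r$ is recovered by specializing $N=X\tr M'$ in the defining isomorphism of $T_M$ and running the rigidity identities backwards. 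Conversely, if $(\dS,\phi^r)$ exists then $\lv\uHom(M,-)\cong\uHom(-,\dS M)$, and the right-hand side, being an internal Hom in its first variable, is left exact as a functor $\M^{\op}\to\C$; applying the exact contravariant equivalence $(-)\rv$ then shows $\uHom(M,-)$ is exact for every $M$, so $\M$ is exact by the criterion.

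For (b), I would construct the relative left Serre functor $\overline\dS$ in the mirror way --- it satisfies $\uHom(\overline\dS M,M')\cong\uHom(M',M)\rv$ and exists as soon as $\M$ is exact, by the same representability argument --- and then chase diagrams: combining the two universal properties with the rigidity identities and the $\kk$-Yoneda principle gives $\Hom_\M(N,\dS\overline\dS M)\cong\Hom_\M(N,M)$ and $\Hom_\M(\overline\dS\dS N,M)\cong\Hom_\M(N,M)$, naturally in all variables, whence $\dS\circ\overline\dS\cong\id_\M\cong\overline\dS\circ\dS$ and $\dS$ is an equivalence with quasi-inverse $\overline\dS$. For (c), I would feed the projection formulas into $\phi^r$:
\[
\uHom\!\big(-,\dS(X\tr M)\big)\ \cong\ \lv\uHom(X\tr M,-)\ \cong\ \lv\!\big(\uHom(M,-)\otimes\lv X\big)\ \cong\ \lv\lv X\otimes\lv\uHom(M,-)\ \cong\ \uHom\!\big(-,\lv\lv X\tr\dS M\big),
\]
so the $\kk$-Yoneda principle yields a natural isomorphism $\fs_{X,M}\colon\dS(X\tr M)\xrightarrow{\sim}\lv\lv X\tr\dS M$; it then remains to verify that $\fs$ satisfies the coherence axioms of a $\C$-module functor $\M\to{}_{\lv\lv(-)}\M$, a diagram chase reducing to the coherence of $\phi^r$ and of the projection formulas.

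The main obstacle is the existence direction of (a): once one recognizes a relative Serre functor as a coherent choice of representing objects for the functors $T_M$, existence becomes a pure representability statement whose only substantial input is the exactness of $\uHom(M,-)$ --- and it is precisely here that the hypothesis ``$\M$ exact'' is used and is unavoidable, since without it one only obtains a pro-object of $\M$. The remaining items --- naturality of $\phi^r$ in $M$, the mirror construction of $\overline\dS$, and the module-functor coherence in (c) --- are routine but involve the bulk of the bookkeeping.
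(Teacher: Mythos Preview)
The paper does not prove this theorem; the statement ends with a \qed and is attributed to \cite[Lemma~4.23, Proposition~4.24]{fuchs2020eilenberg}, so there is no in-paper argument to compare against. Your proposal is a correct self-contained proof. The reformulation of $(\dS,\phi^r)$ as a representing object for $T_M=\Hom_\C(\uHom(M,-),\unit)$ is exactly right, and your exactness criterion for $\M$ in terms of the functors $\uHom(M,-)$ is the clean input that makes the representability go through; the converse via $(-)\rv$ applied to $\lv\uHom(M,-)\cong\uHom(-,\dS M)$ is also fine. Parts (b) and (c) follow from the projection formulas and Yoneda as you outline. The cited reference \cite{fuchs2020eilenberg} organizes the existence direction somewhat differently, passing through the Nakayama functor and the identification $\dS_\M\cong D_\C\tr\dN_\M$ (recorded here as Theorem~\ref{thm:NakSerre}); your direct representability argument is more elementary in that it avoids the Eilenberg--Watts machinery, at the cost of having to verify the module-functor coherence for $\fs$ by hand in (c), whereas in the Nakayama approach that coherence is inherited from the module structure of $\dN$ and the Radford isomorphism.
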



\subsubsection{Nakayama functors}
We assume that the reader is familiar with ends and coends, see \cite[IX]{mac2013categories} for details.
For $\M$ a finite abelian category, define the \textit{left, right Nakayama functors} \cite[Definition~3.14]{fuchs2020eilenberg} to be the endofunctors $\overline{\dN},\dN:\M\rightarrow \M$, respectively, given by
\[ \overline{\dN}_{\M} (M) = \int_{M'\in \M} \Hom_{\M}(M',M) \btr M', \hspace{1cm}\dN_{\M} (M) =\int^{M'\in \M} \Hom_{\M}(M,M')^* \;\btr M'\]
If the category $\M$ is clear from the context, we will often drop it from the subscript. Below we collect some properties of the Nakayama functor that will be needed later.

\begin{theorem}\cite[Theorem~4.5, Corollary~4.7]{fuchs2020eilenberg}\label{thm:Nak}
Let $\C$ be a finite multitensor category and $\M$ a finite left $\C$-module category. Then, 
\begin{enumerate}
    \item[\upshape{(a)}] $\dN$ is a left $\C$-module functor from $\M\rightarrow{}_{(-)\rv\rv}\M$, that is, we have natural isomorphisms $\fn_{X,M}: \dN(X\tr M)  \xrightarrow{\sim}  X\rv\rv \tr \dN(M) $ satisfying the module compatibility conditions. 
    \vspace{0.05cm}
    \item[\upshape{(b)}] If $\M$ is exact, then $\dN$ is a category equivalence with quasi-inverse $\overline{\dN}$. \qed
\end{enumerate} 
\end{theorem}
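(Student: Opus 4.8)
The plan is to read both statements off the coend and end presentations of $\dN$ and $\overline{\dN}$, using the adjunctions supplied by left and right duals in $\C$.

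\textbf{Part (a).} For each $X\in\C$ the functor $X\tr-\colon\M\to\M$ is exact: it is right exact by hypothesis, it has left adjoint ${}^{\vee}X\tr-$ (via $\ev_X,\coev_X$) and it has right adjoint $X^{\rv}\tr-$ (via $\widetilde{\ev}_X,\widetilde{\coev}_X$). Hence $X^{\rv\rv}\tr-$ is exact, so it commutes with the coend that defines $\dN$, and it also commutes with the canonical $\Vect$-action $\btr$ since the $\C$- and $\Vect$-actions on $\M$ commute coherently. I would then construct $\fn_{X,M}$ as the comparison morphism between
\[\dN(X\tr M)=\int^{M'}\Hom_\M(X\tr M,M')^{*}\btr M'\;\cong\;\int^{M'}\Hom_\M(M,X^{\rv}\tr M')^{*}\btr M'\]
(the isomorphism being the adjunction $\Hom_\M(X\tr M,-)\cong\Hom_\M(M,X^{\rv}\tr-)$) and
\[X^{\rv\rv}\tr\dN(M)\;\cong\;\int^{M''}\Hom_\M(M,M'')^{*}\btr(X^{\rv\rv}\tr M''),\]
the map being forced by the universal property of the coend together with the duality adjunction $X^{\rv}\tr-\dashv X^{\rv\rv}\tr-$. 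The double dual is unavoidable here: the coend carries one $\kk$-linear dual $(-)^{*}$ and the internal-Hom adjunction contributes a second duality. That the comparison map is an isomorphism I would verify by evaluating on a projective generator of $\M$---or, realizing $\M\simeq\Rep(A)$ as abelian categories where $\dN\cong\Homk(A,\kk)\otimes_{A}-$, by a direct bimodule computation---and the module-coherence axioms for $\fn$ by a diagram chase against the associativity constraints of the two actions.

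\textbf{Part (b).} First, $\dN\dashv\overline{\dN}$ always: the coend presentation makes $\dN$ right exact, the end presentation makes $\overline{\dN}$ left exact, and the natural isomorphism $\Hom_\M(\dN M,N)\cong\Hom_\M(M,\overline{\dN}N)$ follows from the universal properties of the (co)ends combined with the defining isomorphisms of $\btr$ and with $\Homk(V^{*},W)\cong V\otimes W$ for finite-dimensional $V$. It remains to see that the unit $\id\Rightarrow\overline{\dN}\dN$ and counit $\dN\overline{\dN}\Rightarrow\id$ are isomorphisms when $\M$ is exact. For this I would invoke the theorem of Etingof and Ostrik that in an exact module category the projective objects coincide with the injective objects \cite{etingof2004finite}: then $\dN$ maps projectives to injectives $=$ projectives, and it restricts to an equivalence on the full subcategory of projectives (the classical Nakayama equivalence), with $\overline{\dN}$ as inverse there. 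Since $\dN$ is right exact and restricts to an equivalence on projectives, an Eilenberg--Watts argument upgrades this to an equivalence on all of $\M$, whose quasi-inverse is then its right adjoint $\overline{\dN}$. When $\C$ is a finite tensor category there is a slicker route: by Theorem~\ref{thm:Serre} the relative Serre functor $\dS_\M$ is an equivalence, and $\dN_\M$ can be shown to agree with $\dS_\M$ up to composition with the action of an invertible object of $\C$, hence is an equivalence as well, with quasi-inverse identified with $\overline{\dN}_\M$; the finite multitensor case then follows by decomposing $\unit_\C$ into its simple summands and arguing one block at a time.

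\textbf{Main obstacle.} The delicate part is Part (a): making rigorous the interchange of the $\C$-action with the defining coend, and constructing $\fn$ together with its coherence---in particular keeping careful track of which duality produces the $(-)^{\rv\rv}$-twist. Once the adjunction $\dN\dashv\overline{\dN}$ is in hand, Part (b) is essentially formal given the Etingof--Ostrik projective $=$ injective dichotomy.
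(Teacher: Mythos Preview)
The paper does not supply its own proof of this theorem: the statement is quoted from \cite{fuchs2020eilenberg} and closed with a \qed, and the subsequent remark simply asserts that the arguments given there for finite tensor categories carry over verbatim to the multitensor setting. So there is no in-paper argument to compare against; your proposal is being measured against the cited source rather than anything written here.

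That said, your outline is broadly faithful to the Fuchs--Schweigert approach. For part~(a) they work, as you do, with the coend presentation and the functoriality principle that a right exact functor with a left adjoint intertwines the Nakayama functors up to a twist by the double left adjoint; specialising to $F=X\tr-$ gives exactly the $(-)^{\rv\rv}$-twist you describe. Your diagnosis of the ``main obstacle'' is accurate: the content is in the interchange of the $\C$-action with the defining coend and in bookkeeping the coherence, not in any deep idea. For part~(b), the argument in \cite{fuchs2020eilenberg} is precisely the one you sketch first: $\dN$ is right exact, and exactness of $\M$ forces projectives and injectives to coincide, so the classical Nakayama equivalence on projectives upgrades to an equivalence of abelian categories whose right adjoint $\overline{\dN}$ is the quasi-inverse.

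One caution on your ``slicker route'' via the relative Serre functor: in the logical order of both this paper and \cite{fuchs2020eilenberg}, the identification $\dS_{\M}\cong D\tr\dN_{\M}$ (Theorem~\ref{thm:NakSerre} here) is proved \emph{after} one already knows $\dN$ is an equivalence, so invoking it to deduce part~(b) risks circularity. Your primary argument through the projective--injective coincidence is the honest one.
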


\begin{remark}
Theorems~\ref{thm:Serre} and \ref{thm:Nak}  were proved in \cite{fuchs2020eilenberg} under the assumption that $\C$ is a finite tensor category. However, the same arguments work for multitensor categories.
\end{remark}

\subsubsection{Unimodular multitensor categories}
Let $\C$ be a finite multitensor category over $\kk$. 

\begin{definition}
The \textit{distinguished invertible object} of $\C$, denoted $D_{\C}$, is defined as $\overline{\dN}_{\C}(\unit_{\C})$. We call $\C$ \textit{unimodular} if $D_{\C}\cong \unit$.
\end{definition}

\begin{remark}
By assumption, $\kk$ is algebraically closed, and hence perfect. Thus, we can employ results from  \cite{shimizu2016unimodular} about the distinguished invertible object. In particular, by \cite[Lemma~5.1]{shimizu2016unimodular}, the above definition of the object $D_{\C}$ matches the one given in \cite{etingof2004analogue,etingof2016tensor}.
\end{remark}

As $\C$ is a multitensor category, following \cite[\S4.3]{etingof2004analogue}, we can write $\unit=\oplus_{i\in I} \unit_i$ for some finite set $I$, where the objects $\unit_i$ are simple and pairwise non-isomorphic. The categories $\C_{ij}:=\unit_i\otimes \C\otimes \unit_j$ are called as the \textit{component subcategories} of $\C$ and we have that $\C=\oplus_{i,j\in I} \C_{ij}$. Here, for all $i\in I$, $\C_{ii}$ is a finite tensor category with unit object $\unit_i$. Now, consider the following result.



\begin{lemma}\label{lem:uniMulti}
    We have that $\C$ is unimodular if and only if all its component subcategories $\C_{ii}$ are unimodular.
\end{lemma}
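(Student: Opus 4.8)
The plan is to compute the distinguished invertible object $D_\C=\overline{\dN}_\C(\unit_\C)$ blockwise along the decomposition $\C=\bigoplus_{i,j\in I}\C_{ij}$ and compare it with $\unit_\C$. First I would record the elementary facts: $\unit_\C=\bigoplus_{i\in I}\unit_i$, each $\unit_i$ lies in $\C_{ii}$ (since $\unit_i\otimes\C\otimes\unit_i=\C_{ii}$ has unit object $\unit_i$), and $\unit_i$ is precisely the unit object of the finite tensor category $\C_{ii}$. Because the left Nakayama functor $\overline{\dN}_\C$ is $\kk$-linear, hence additive, this already gives $D_\C\cong\bigoplus_{i\in I}\overline{\dN}_\C(\unit_i)$.

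The main step is to identify $\overline{\dN}_\C(\unit_i)$ with $D_{\C_{ii}}=\overline{\dN}_{\C_{ii}}(\unit_i)$. For this I would use that the Nakayama functor of a finite abelian category is compatible with finite direct-sum decompositions: if $\M=\bigoplus_k\M_k$ as abelian categories, then $\overline{\dN}_\M\big(\bigoplus_k M_k\big)\cong\bigoplus_k\overline{\dN}_{\M_k}(M_k)$. One way to see this is to pass to an algebra model, writing $\C_{ij}\simeq\Rep(A_{ij})$ so that $\C\simeq\Rep(A)$ with $A=\prod_{i,j}A_{ij}$; then $\Homk(A,\kk)\cong\prod_{i,j}\Homk(A_{ij},\kk)$ as an $A$-bimodule, every $A$-module splits compatibly, and $\overline{\dN}$, which on $\Rep(A)$ is computed from $A$ and this bimodule, decomposes accordingly. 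Alternatively, one can argue directly from the coend formula $\overline{\dN}_\C(M)=\int_{X\in\C}\Hom_\C(X,M)\btr X$: hom-spaces between objects in distinct blocks $\C_{ij}$ and $\C_{kl}$ vanish, so for $M=\unit_i\in\C_{ii}$ the end over $\C$ only involves the block $\C_{ii}$ and reduces to $\overline{\dN}_{\C_{ii}}(\unit_i)$. Either way, $\overline{\dN}_\C(\unit_i)\cong D_{\C_{ii}}$ (an object of $\C_{ii}$), and therefore $D_\C\cong\bigoplus_{i\in I}D_{\C_{ii}}$.

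It then remains to observe that $D_\C\cong\unit_\C$, i.e.\ $\bigoplus_{i\in I}D_{\C_{ii}}\cong\bigoplus_{i\in I}\unit_i$, holds if and only if $D_{\C_{ii}}\cong\unit_i$ in $\C_{ii}$ for every $i$. Indeed, both sides are supported blockwise with $D_{\C_{ii}},\unit_i\in\C_{ii}$, and the projection functors $\unit_i\otimes(-)\otimes\unit_i\colon\C\to\C_{ii}$ are exact and additive and annihilate every block except the $(i,i)$-th; applying them to an isomorphism yields $D_{\C_{ii}}\cong\unit_i$ for each $i$, and the converse is clear. By definition this says $\C$ is unimodular precisely when every $\C_{ii}$ is unimodular. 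The one genuinely non-formal point — and hence the step I would be most careful about — is the compatibility of $\overline{\dN}$ with the block decomposition; everything else is bookkeeping, so I would state that compatibility cleanly (it can also be extracted from \cite{fuchs2020eilenberg}) before assembling the argument.
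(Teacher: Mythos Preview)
Your proposal is correct and follows essentially the same strategy as the paper: establish the decomposition $D_{\C}\cong\bigoplus_{i\in I}D_{\C_{ii}}$ and then compare blockwise with $\unit_{\C}=\bigoplus_{i\in I}\unit_i$. The only difference is that the paper simply quotes this decomposition from \cite[Theorem~5.3]{shimizu2016unimodular}, whereas you derive it yourself from the end formula for $\overline{\dN}$ and the vanishing of hom-spaces between distinct blocks; your argument is thus more self-contained, at the cost of some extra work, while the paper's is shorter but relies on an external reference. For the converse direction the paper tensors both sides by $\unit_j$, which is exactly your projection-functor step.
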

\begin{proof}
By \cite[Theorem~5.3]{shimizu2016unimodular}, $D_{\C}=\oplus_{i\in I}D_{\C_{ii}}$. If each $\C_{ii}$ is unimodular, then $\D_{\C_{ii}}\cong\unit_i$. Hence, 
\[\unit_{\C}=\oplus_{i\in I} \unit_i = \oplus_{i\in I} D_{\C_{ii}} \cong D_{\C}. \]
Thus, $\C$ is unimodular. 
Conversely, if $\C$ is unimodular, then we get that $\oplus_{i\in I} \unit_i = \oplus_{i\in I} D_{\C_{ii}}$. Tensoring both sides with $\unit_j$, we get that $D_{\C_{jj}}\cong \unit_j$. Hence, each one of the categories $\C_{ii}$ is unimodular.
\end{proof}

Every finite tensor category $\C$ comes equipped with a natural isomorphism 
\begin{equation}\label{eq:Radford}
    \fR=\{\fR_X : \lv\lv X\otimes D \rightarrow D \otimes X\rv\rv \}_{X\in\C} 
\end{equation}
called the \textit{Radford isomorphism} of $\C$ \cite{etingof2004analogue}. The following results provides a relation between the relative Serre functor and the Nakayama functor of an exact $\C$-module category $\M$.

\begin{theorem}\cite[Theorem~4.26]{fuchs2020eilenberg}\label{thm:NakSerre}
Let $\C$ be a finite tensor category and $\M$ be an exact left $\C$-module category. Then, $\dS_{\M}\cong_{\C} D\tr \dN_{\M}$ as a twisted $\C$-module functor and its $\C$-module constraints are given by 
\begin{equation*}\label{eq:NakModuleConstraint}
    \Scale[0.99]{\fs_{X,M} :=\big( \, D\tr\dN(X\tr M) \xrightarrow{\id\, \tr\, \fn_{X,M}} D\tr( X \rv\rv \tr \dN(M)) \xrightarrow{\fR_X^{-1}\, \tr \,\id}  \lv\lv X \tr (D \tr \dN(M))\, \big)_{X\in\C, M\in\M}} . \qed
\end{equation*}
\end{theorem}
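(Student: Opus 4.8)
The plan is to split the statement into two parts: first, produce a natural isomorphism $\theta_M\colon\dS_\M(M)\xrightarrow{\sim}D\tr\dN_\M(M)$ of the underlying endofunctors of $\M$; second, check that, transported along $\theta$, the $\C$-module structure $\fs$ on $\dS_\M$ coming from Theorem~\ref{thm:Serre}(c) matches the displayed constraint $(\fR_X^{-1}\tr\id)\circ(\id_D\tr\fn_{X,M})$. Note that merely knowing both sides are twisted $\C$-module equivalences $\M\to{}_{\lv\lv(-)}\M$ is not enough, so the defining property of $\dS_\M$ as a relative Serre functor must really be used.

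For the first part I would argue by the Yoneda lemma. Unwinding the defining natural isomorphism $\phi^r$ of the relative Serre functor together with the internal-Hom adjunction (\ref{eq:internalHom}) at $X=\unit$ and left duality in $\C$ yields
\[
\Hom_\M\big(N,\dS_\M(M)\big)\ \cong\ \Hom_\C\big(\uHom(M,N),\unit\big)
\qquad(M,N\in\M),
\]
naturally in $M$ and $N$. For the right-hand side, I would use that $D\tr(-)$ and $D^{-1}\tr(-)$ are mutually inverse equivalences (hence adjoint on both sides), the standard Serre-type property $\Hom_\M(P,\dN_\M(M))\cong\Hom_\M(M,P)^{*}$ of the Nakayama functor, and the $\C$-action adjunction together with (\ref{eq:internalHom}) at $X=D$, to rewrite $\Hom_\M\big(N,D\tr\dN_\M(M)\big)$ successively as $\Hom_\M(D^{-1}\tr N,\dN_\M(M))$, then $\Hom_\M(M,D^{-1}\tr N)^{*}$, then $\Hom_\M(D\tr M,N)^{*}$, and finally $\Hom_\C(D,\uHom(M,N))^{*}$. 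It then remains to identify $\Hom_\C(D,Z)^{*}\cong\Hom_\C(Z,\unit)$ naturally for $Z\in\C$; this follows from $D=\odN_\C(\unit)$, the fact that $\dN_\C$ is quasi-inverse to $\odN_\C$ (Theorem~\ref{thm:Nak}(b)), and the Serre-type property applied to the finite abelian category $\C$. Chaining these isomorphisms gives $\Hom_\M(N,D\tr\dN_\M(M))\cong\Hom_\M(N,\dS_\M(M))$ naturally, and Yoneda produces $\theta$. (Alternatively, $\theta$ can be obtained from a transitivity property of relative Serre functors applied to the tower ``$\Vect$-module $\C$, $\C$-module $\M$'', using that the $\Vect$-relative Serre functor of a finite abelian category is its Nakayama functor and $\odN_\C(\unit)=D$.)

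The second part is where the real work lies. I would use that the pair $(\dS_\M,\phi^r)$ is unique up to a unique isomorphism compatible with $\phi^r$, and that the module structure $\fs$ of Theorem~\ref{thm:Serre}(c) is the one canonically forced by the $\C$-linearity of $\phi^r$. Hence it suffices to equip the endofunctor $M\mapsto D\tr\dN_\M(M)$ with the constraint $\fs'_{X,M}:=(\fR_X^{-1}\tr\id)\circ(\id_D\tr\fn_{X,M})$ and verify (i) that $\fs'$ satisfies the module-functor hexagon, so that $D\tr\dN_\M$ becomes a twisted $\C$-module functor $\M\to{}_{\lv\lv(-)}\M$, and (ii) that $\theta$ intertwines this module structure with the hom-space identification of the previous paragraph. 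Point (ii) is a bookkeeping matter once every arrow of the chain of adjunctions is tracked through the $\C$-action. Point (i) is the main obstacle: expanding $\fs'_{X\otimes Y,M}$ and $(\id_{\lv\lv X}\tr\fs'_{Y,M})\circ\fs'_{X,Y\tr M}$ and simplifying, via the module-functor hexagon for $\dN_\M$ (Theorem~\ref{thm:Nak}(a)), naturality of $\fn$ and $\fR$, and monoidality of the double-dual functors, leaves exactly the defining coherence of the Radford isomorphism (\ref{eq:Radford}) relating $\fR_{X\otimes Y}$ to $\fR_X$ and $\fR_Y$. Carrying out that reduction cleanly --- and seeing that it is genuinely $\fR$, rather than an arbitrary isomorphism $\lv\lv X\otimes D\cong D\otimes X\rv\rv$, that is needed --- is the part I expect to require the most care.
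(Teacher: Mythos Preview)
The paper does not give its own proof of this statement: it is quoted verbatim from \cite[Theorem~4.26]{fuchs2020eilenberg} and closed with a \qed. There is therefore nothing in the paper to compare your argument against; your sketch is an attempt to reprove a result the author is content to import.

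That said, two comments on your sketch. First, the ``standard Serre-type property'' you invoke, $\Hom_\M(P,\dN_\M(M))\cong\Hom_\M(M,P)^{*}$, is only valid in general when $M$ is projective (it is the statement that $\dN_\M(M)$ represents the functor $\Hom_\M(M,-)^{*}$, which is only left exact for projective $M$). As written, your Yoneda chain applies it with arbitrary $M$, which is a gap. It is easily repaired: run the argument for projective $M$, obtain $\theta_M$ there, and then extend using that both $\dS_\M$ and $D\tr\dN_\M$ are exact (they are equivalences by Theorems~\ref{thm:Serre}(b) and \ref{thm:Nak}(b)) and $\M$ has enough projectives. Second, your diagnosis of part~(ii) is correct in spirit: the compatibility of $\theta$ with the displayed module constraint really does reduce to the monoidal coherence of the Radford isomorphism, and that is where the specific choice $\fR_X$ (rather than an arbitrary isomorphism $\lv\lv X\otimes D\cong D\otimes X\rv\rv$) enters. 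In \cite{fuchs2020eilenberg} this is handled by an Eilenberg--Watts style argument rather than the hom-by-hom bookkeeping you propose, but the content is the same.
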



\subsection{Algebras in monoidal categories}\label{subsec:3alg}
In a monoidal category $\C$, we can define algebras and coalgebras \cite[Chapter~6.1]{turaev2017monoidal}.
Furthermore, algebras (coalgebras) and their morphisms in $\C$ form a category, which we denote as $\textsf{Alg}(\C)$ ($\textsf{Coalg}(\C)$).
A \textit{Frobenius algebra} in $\C$ is a 5-tuple $(A,m,u,\Delta,\nu)$ such that $(A,m,u) \in {\sf Alg}(\C)$ and $(A,\Delta,\nu) \in {\sf Coalg}(\C)$ satisfying
\begin{equation*}
    (m \otimes \text{id}_A)(\text{id}_A \otimes \Delta) = \Delta m = (\text{id}_A \otimes m)(\Delta \otimes \text{id}_A).
\end{equation*}
A \textit{morphism} of Frobenius algebras $f:A\rightarrow B$ is a map  in ${\sf Alg}(\C)$ and in ${\sf Coalg}(\C)$.
Frobenius algebras and their morphisms in $\C$ form a category, which we denote by ${\sf Frob}(\C)$.

\smallskip

Now, let $\C$ be a $\kk$-linear monoidal category and $A$ a Frobenius algebra in it.
If $m\circ \Delta = \beta_A \id_A$ holds for some $\beta_A \in\kkx$, we call the Frobenius algebra \textit{separable}. If furthermore, $\nu\circ  u = \beta_{\unit} \id_{\unit}$ holds for some $\beta_{\unit}\in \kkx$, we call it \textit{special}.
A Frobenius algebra $(A,m,u,\Delta,\nu)$ in a pivotal monoidal category $\C$ is called \textit{symmetric} \cite[Definition~3.4]{fuchs2002tft} if it satisfies 
\begin{equation*}\label{eq:symAlg}
    (\nu \; m \otimes \id_{\lv A})(\id_A\otimes\coev_A) =   (\fp_{A\rv} \otimes \nu \; m)(\widetilde{\coev}_A\otimes \id_A) : A\rightarrow \lv A. 
\end{equation*}

We call an algebra $A$ in \textit{connected} if it satisfies $\Hom_{\C}(\unit, A)\cong \kk$.
An algebra $A$ in a braided category $(\C,c)$ is called \textit{commutative} if $m\; c_{A,A}=m$ holds.

\begin{lemma}\label{lem:speFrob}
Let $(A,m,u,\Delta,\nu)$ be a connected, Frobenius algebra in a $\kk$-linear pivotal category $\C$. If $\dim(A)\neq 0$, then $A$ is a special Frobenius algebra in $\C$.
\end{lemma}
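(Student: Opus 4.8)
The plan is to exploit the fact that in a connected Frobenius algebra the two ``defect'' scalars — the separability constant and the counit-unit pairing — are forced to agree, and that a nonzero dimension witnesses their nonvanishing. First I would record the standard fact that for a Frobenius algebra $(A,m,u,\Delta,\nu)$ in a rigid monoidal category, the composite $u \circ \nu \colon A \to \unit \to A$ need not be used; instead, consider $m \circ \Delta \colon A \to A$. Since $(A,m,u,\Delta,\nu)$ is Frobenius, $m\circ\Delta$ is an $A$-bimodule endomorphism of $A$ (this is immediate from the Frobenius relations: $\Delta m = (m\otimes\id)(\id\otimes\Delta)$ etc.). Because $A$ is connected, $\Hom_{\C}(\unit,A)\cong\kk$, and one shows that every $A$-bimodule endomorphism of $A$ is a scalar multiple of $\id_A$ — indeed such an endomorphism is determined by its precomposition with $u\colon \unit\to A$, which lands in $\Hom_{\C}(\unit,A)\cong\kk u$. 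Hence $m\circ\Delta = \beta_A\,\id_A$ for some $\beta_A\in\kk$; the content is to prove $\beta_A\neq 0$.

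Next I would relate $\beta_A$ to $\dim(A)$ and to the counit-unit scalar. Applying $\nu$ on the left and $u$ on the right to $m\circ\Delta = \beta_A\id_A$ and using the Frobenius/rigidity structure, one gets an expression of the form $\nu\circ u = \gamma\,\id_{\unit}$ is \emph{not} automatic, but one does obtain a relation among $\beta_A$, the pairing $\ev$-type morphism built from $\nu\circ m$, and $\dim(A)$. The cleanest route: the Frobenius structure equips $A$ with a nondegenerate pairing $\lambda := \nu\circ m\colon A\otimes A\to\unit$ exhibiting $A\cong \lv A$ via some isomorphism $\theta\colon A\to\lv A$; under this identification $\dim(A) = \widetilde{\ev}_A\circ\coev_A$ can be rewritten, using the copairing $(\id\otimes\theta^{-1})\circ\coev$ coming from $\Delta\circ u$, as a scalar multiple of $\beta_A$ times $\nu\circ u$. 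Concretely I expect to derive $\dim(A) = (\nu\circ u)\cdot\beta_A\cdot(\text{something invertible})$, or more precisely, after tracking the pivotal structure, an identity forcing: if $\dim(A)\neq 0$ then both $\beta_A\neq 0$ and $\nu\circ u\neq 0$. Since $\Hom_\C(\unit,A)\cong\kk$ and $\unit$ is simple, $\End_\C(\unit)\cong\kk$, so $\nu\circ u$ is a scalar; once it is nonzero, rescaling $\nu$ (equivalently $\Delta$) does not affect the Frobenius axioms' validity only up to the same scalar, and we conclude $A$ is separable ($m\circ\Delta=\beta_A\id_A$ with $\beta_A\neq0$) and then special ($\nu\circ u = \beta_\unit\id_\unit$ with $\beta_\unit\neq0$).

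The key steps in order: (1) show $m\circ\Delta$ is an $A$-bimodule map, hence a scalar $\beta_A$, using connectedness and simplicity of $\unit$; (2) use the nondegenerate Frobenius pairing $\nu\circ m$ and the corresponding copairing $\Delta\circ u$ to express $\dim(A)$ — via the pivotal evaluation/coevaluation — as an invertible multiple of $\beta_A\cdot(\nu\circ u)$ inside $\End_\C(\unit)\cong\kk$; (3) deduce from $\dim(A)\neq0$ that $\beta_A\neq 0$, giving separability; (4) deduce $\nu\circ u\neq0$, giving that $A$ is special. Alternatively, step (2)–(4) can be streamlined by invoking that a connected separable Frobenius algebra is automatically special: once $\beta_A\neq0$, the morphism $e := \beta_A^{-1}\,\Delta\circ u\colon\unit\to A\otimes A$ is a separability idempotent-type element, and connectedness forces $\nu\circ u\neq 0$, because otherwise $\nu$ would factor nontrivially contradicting nondegeneracy of the pairing on the simple object $\unit$.

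The main obstacle I anticipate is step (2): carefully tracking how the pivotal structure $\fp$ intervenes when one identifies $A$ with $\lv A$ through the Frobenius pairing and then computes $\dim(A) = \widetilde{\ev}_A\circ\coev_A$ in terms of $\nu, m, \Delta, u$. One must make sure the isomorphism $A\cong\lv A$ used to define the copairing is compatible with the pivotal evaluations $\widetilde{\ev}_A = \ev_{\lv A}(\fp_A\otimes\id)$, so that no spurious non-invertible factor enters; getting the bookkeeping right — rather than any conceptual difficulty — is where the work lies. A clean way to sidestep most of it is to observe that $\dim(A) = \widetilde{\ev}_A\circ\coev_A = (\nu\circ m)\circ\big((\id_A\otimes\theta^{-1})\circ(\Delta\circ u)\big)$ up to the pivotal twist, and that the right-hand side manifestly equals $\beta_A$ times $\nu\circ u$ after using $m\circ\Delta=\beta_A\id_A$; then $\dim(A)\neq0$ immediately gives $\beta_A\neq0$ and $\nu\circ u\neq0$.
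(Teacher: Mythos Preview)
Your proposal is correct and follows essentially the same route as the paper: show $m\circ\Delta$ is a scalar using connectedness, write $\dim(A)=\nu\, m\,\Delta\, u$ via the Frobenius self-duality, and read off that both $\beta_A$ and $\nu\circ u$ are nonzero. The only difference is that the paper dispatches your ``main obstacle'' in one line---since Frobenius algebras are self-dual with (co)evaluation given by $\nu m$ and $\Delta u$, and dimensions are independent of the choice of dual, one has $\dim(A)=\nu m\Delta u$ directly, with no pivotal bookkeeping needed.
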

\begin{proof}
As Frobenius algebras are self dual, we have that $\dim(A) = \varepsilon m \Delta u$. Also, $m,\Delta$ are maps of left $A$-modules in $\C$. Thus, $m\Delta \in \Hom_A(A,A) \cong \Hom_{\C}(\unit,A) \cong \kk$. If $m\Delta=0$, then we will have that $\dim(A)=0$. Therefore, we get that $m\Delta = \beta_2\id_A$ for some $\beta_2\in \kkx$. Hence, $\dim(A) = \beta_{2}\varepsilon u$. Since $\dim(A)\neq 0$, we have that $\varepsilon u = \beta_0 \id_{\unit}$ for some $\beta_0\in \kkx$. Thus, $A$ is a special Frobenius algebra.
\end{proof}

Next, we recall from \cite{waltonfiltered} an alternate characterization of Frobenius algebras in multitensor categories as we will soon need it. Let $\C$ be a multitensor category and $A$ an algebra in $\C$. Then, a \textit{left ideal} $I$ of $A$ is a tuple $(I,\lambda,\phi)$ where $\lambda:A\otimes I\rightarrow I$ is morphism that satisfies the relations $\lambda(m\otimes\id_I) = \lambda(\id_A\otimes \lambda),\; \lambda(u\otimes\id_I)=\id_I$
and $\phi:I\rightarrow A$ is a monic map satisfying $\phi \lambda_I = m(\id_A\otimes \lambda)$.

\begin{theorem}\cite[Theorem~5.3]{waltonfiltered}\label{thm:walton}
Let $\C$ be an abelian rigid monoidal category.
An algebra $(A,m,u)$ is a Frobenius algebra in $\C$ if and only if there exists a morphism $\nu:A\rightarrow \unit$ so that if a left or right ideal $(I,\lambda,\phi)$ of $A$ factors through $\textnormal{ker}(\nu)$, then $\phi$ is a zero morphism in $\C$. \qed
\end{theorem}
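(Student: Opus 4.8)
The plan is to deduce the theorem from the classical characterization of Frobenius algebras by nondegenerate forms: an algebra $(A,m,u)$ in an abelian rigid monoidal category $\C$ is a Frobenius algebra with counit a prescribed morphism $\nu\colon A\to\unit$ if and only if the pairing $\beta:=\nu\circ m\colon A\otimes A\to\unit$ is nondegenerate, in the sense that the two mates of $\beta$ (the canonical morphisms from $A$ to its left and right duals) are isomorphisms. Note at the outset that $\beta$ is automatically associative, $\beta\circ(m\otimes\id_A)=\beta\circ(\id_A\otimes m)$, by associativity of $m$. Granting this classical fact, the content of the theorem becomes the claim that, for a fixed $\nu$, nondegeneracy of $\beta$ is equivalent to the stated vanishing condition on ideals factoring through $\ker(\nu)$, and I would establish the two implications separately.

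For the ``only if'' implication --- which, taking $\nu$ to be the Frobenius counit, gives the forward direction of the theorem --- suppose $A$ carries a Frobenius structure $(A,m,u,\Delta,\nu)$ and set $\gamma:=\Delta\circ u\colon\unit\to A\otimes A$. A routine manipulation of the Frobenius relations and the (co)unit axioms gives the zig-zag identities $(\id_A\otimes\beta)(\gamma\otimes\id_A)=\id_A=(\beta\otimes\id_A)(\id_A\otimes\gamma)$; for instance, one rewrites $(m\otimes\id_A)(\id_A\otimes\Delta)=\Delta\circ m$, then applies $\nu\otimes\id_A$ together with the unit and counit axioms. Now let $(I,\lambda,\phi)$ be a left ideal of $A$ with $\nu\circ\phi=0$. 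Using the ideal relation $\phi\circ\lambda=m\circ(\id_A\otimes\phi)$ one computes $\beta\circ(\id_A\otimes\phi)=\nu\circ m\circ(\id_A\otimes\phi)=\nu\circ\phi\circ\lambda=0$, and substituting this into the identity $\phi=(\id_A\otimes\beta)(\id_A\otimes\id_A\otimes\phi)(\gamma\otimes\id_I)$ --- a direct consequence of the first zig-zag --- forces $\phi=0$. The case of a right ideal is symmetric, using the other zig-zag and the relation $\phi\circ\lambda=m\circ(\phi\otimes\id_A)$. This proves the implication, and incidentally identifies the $\nu$ that works with the Frobenius counit.

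For the ``if'' implication, assume $\nu$ satisfies the stated condition; the goal is to show $\beta=\nu\circ m$ is nondegenerate. Given $h\colon Z\to A$ with $\beta\circ(\id_A\otimes h)=0$, let $\iota\colon\textnormal{im}(h)\hookrightarrow A$ be the image of $h$ and let $\phi_I\colon I\hookrightarrow A$ be the image of $m\circ(\id_A\otimes\iota)\colon A\otimes\textnormal{im}(h)\to A$, the left ideal of $A$ generated by $\textnormal{im}(h)$. Three points must be checked: that $I$ genuinely satisfies the definition of a left ideal, the action $\lambda\colon A\otimes I\to I$ being obtained by factoring $m\circ(\id_A\otimes\phi_I)$ through the monomorphism $\phi_I$ (an image computation that uses exactness of $A\otimes-$ in a rigid category, plus associativity and unitality of $m$); that $\textnormal{im}(h)\subseteq I$, since $m\circ(u\otimes\iota)=\iota$; and that $\nu\circ\phi_I=0$ --- precomposing with the epimorphism $A\otimes\textnormal{im}(h)\twoheadrightarrow I$ yields $\nu\circ m\circ(\id_A\otimes\iota)$, which vanishes because $\nu\circ m\circ(\id_A\otimes h)=0$, the map $Z\twoheadrightarrow\textnormal{im}(h)$ is epic, and the tensor product is exact. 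The hypothesis then forces $\phi_I=0$; as $\phi_I$ is monic, $I=0$, hence $h=0$. This shows one mate of $\beta$ is monic; the same argument with right ideals shows the other mate is monic. Exactness of the duality functors $\lv(-)$ and $(-)\rv$ together with associativity of $\beta$ then promotes both monomorphisms to isomorphisms --- in an abelian category a morphism that is both monic and epic is invertible --- so $\beta$ is nondegenerate, and the classical characterization produces a Frobenius structure on $A$ with comultiplication built from the copairing dual to $\beta$ and counit $\beta\circ(\id_A\otimes u)=\nu$.

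The step I expect to be the main obstacle is the bookkeeping in the ``if'' implication: carefully constructing the ideal generated by a subobject and verifying every clause of the definition of an ideal, and then upgrading ``the mate of $\beta$ is a monomorphism'' to ``it is an isomorphism'' in an arbitrary abelian rigid monoidal category, where no dimension count is available and one must instead dualize and invoke exactness. The classical Frobenius-form characterization itself --- reconstructing $\Delta$ from a nondegenerate $\beta$ and checking the Frobenius relations --- is routine and can either be cited or derived from the same zig-zag calculus used in the forward direction.
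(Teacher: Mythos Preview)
The paper does not supply a proof of this statement: it is quoted from \cite{waltonfiltered} and closed with a terminal \qed, so there is no in-paper argument to compare against. The relevant question is simply whether your sketch is correct, and it is.

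The forward direction is a clean use of the self-duality of a Frobenius algebra: the zig-zag identities for $(\beta,\gamma)$ hold, and the vanishing of $\beta\circ(\id_A\otimes\phi)$ forces $\phi=0$ via one of them. The reverse direction is the substantive part, and your plan---generate a left ideal from $\textnormal{im}(h)$, show it lies in $\ker(\nu)$, invoke the hypothesis---works; the factoring of $m\circ(\id_A\otimes\phi_I)$ through the monic $\phi_I$ goes through using associativity of $m$ and exactness of $A\otimes(-)$, exactly as you indicate. The bookkeeping you flag as the main obstacle is genuine but routine.

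One small correction in your final step: you attribute the promotion of the two monic mates to isomorphisms to ``associativity of $\beta$''. That is not the operative ingredient. For \emph{any} pairing $\beta\colon A\otimes A\to\unit$ the two mates $\beta^\flat\colon A\to\lv A$ and $\beta^\sharp\colon A\to A\rv$ are exchanged by duality, in the sense that $\lv(\beta^\sharp)\cong\beta^\flat$ under the canonical identification $\lv(A\rv)\cong A$. Since $\lv(-)$ is a contravariant exact equivalence it sends monics to epics, so $\beta^\sharp$ monic implies $\beta^\flat$ epic; combined with $\beta^\flat$ monic (from the right-ideal half of your argument) this makes $\beta^\flat$ an isomorphism in the abelian category $\C$, and likewise $\beta^\sharp$. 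Associativity of $\beta$ is needed later---when you reconstruct the Frobenius comultiplication from the nondegenerate form and verify the Frobenius relations---but not for this monic-to-iso step.
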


Now, let $\C$ be a multitensor category.
Recall that a multitensor category is additive, and hence it admits direct sums of objects. This means for any two objects $X_1,X_2\in \C$, there exists an object $X_1\oplus X_2\in \C$ along with maps $\iota_k:X_k \rightarrow X_1\oplus X_2$ and $p_k:X_1\oplus X_2\rightarrow X_k$ where $k\in \{1,2\}$ such that the following relations hold, 
\begin{equation*}
    \iota_1 p_1+ \iota_2 p_2 = \id_{X_1\oplus X_2}  \;\;\text{and} \;\;  p_j \iota_k = \delta_{j,k} \id_{X_k} \;\text{for}\; j,k\in \{1,2\}. 
\end{equation*}

It is well known that if $(A_i,m_i,u_i)$ $(i\in{1,2})$ are algebras in $\C$, then so is $(A_1\oplus A_2, m_{12},u_{12})$ with 
\begin{equation*}
    m_{12}= \sum_{k\in\{1,2\}} \iota_k m_k (p_k \otimes p_k) \;\; \text{and} \;\; 
    u_{12} = \sum_{k\in\{1,2\}}  \iota_k u_k 
\end{equation*}
Now consider the following result which strengthens \cite[Proposition~3.21]{fuchs2002tft}.

\begin{proposition}\label{prop:frobDirectSum}
Let $\{(A_i,m_i,u_i) \}_{i\in I}$ be a finite set of algebras in a multitensor category $\C$. 
Then, $A=\oplus_{i\in I}A_i$ is a Frobenius algebra in $\C$ if and only if each $A_i$ is a Frobenius algebra in $\C$. 
\end{proposition}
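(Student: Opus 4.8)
The plan is to prove both directions of the equivalence, with the characterization of Frobenius algebras via ideals (Theorem~\ref{thm:walton}) doing most of the work. Write $A=\oplus_{i\in I}A_i$ with structure maps $m_{12},u_{12}$ (extended to the finite index set $I$ in the obvious way) and the canonical injections $\iota_i:A_i\to A$ and projections $p_i:A\to A_i$.

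For the ``only if'' direction, suppose $A$ is Frobenius with counit $\nu:A\to\unit$. I would set $\nu_i:=\nu\circ\iota_i:A_i\to\unit$ and claim this is a Frobenius counit for $A_i$ in the sense of Theorem~\ref{thm:walton}. Given a left ideal $(I,\lambda_I,\phi_I)$ of $A_i$ that factors through $\ker(\nu_i)$, I would produce a left ideal of $A$ from it: take the same object $I$, the action $\lambda':=\iota_i?$ --- more precisely $\lambda':A\otimes I\xrightarrow{p_i\otimes\id}A_i\otimes I\xrightarrow{\lambda_I}I$, and the monic $\phi':=\iota_i\circ\phi_I:I\to A$. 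One checks the left-ideal axioms for $(I,\lambda',\phi')$ over $A$ using the explicit formulas for $m_{12},u_{12}$ together with $p_i\iota_j=\delta_{ij}\id$; and $\phi'$ factors through $\ker(\nu)$ because $\nu\phi' = \nu\iota_i\phi_I = \nu_i\phi_I = 0$. By Theorem~\ref{thm:walton} applied to $A$, $\phi'=\iota_i\phi_I$ is zero; since $\iota_i$ is a split monomorphism ($p_i\iota_i=\id$), $\phi_I=0$. Hence $A_i$ is Frobenius.

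For the ``if'' direction, suppose each $A_i$ is Frobenius with counit $\nu_i:A_i\to\unit$. Define $\nu:=\sum_{i\in I}\nu_i\circ p_i:A\to\unit$ and verify it is a Frobenius counit for $A$ via Theorem~\ref{thm:walton}. Let $(I,\lambda,\phi)$ be a left ideal of $A$ with $\phi$ factoring through $\ker(\nu)$. Set $I_i:=A_i\otimes_?$--- rather, I would \emph{not} try to decompose $I$ itself; instead set $\phi_i:=p_i\circ\phi:I\to A_i$ and equip $I$ with the $A_i$-action $\lambda_i:A_i\otimes I\xrightarrow{\iota_i\otimes\id}A\otimes I\xrightarrow{\lambda}I$. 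These need not make $\phi_i$ monic, so the cleaner route is: consider the image $J_i:=\operatorname{im}(\phi_i)\hookrightarrow A_i$, which inherits a left $A_i$-ideal structure from $(I,\lambda,\phi)$ (using that $\lambda$ is an $A$-action and $p_i m_{12}(\iota_i\otimes\iota_i)=m_i(p_i\otimes p_i)$-type identities); and $J_i$ factors through $\ker(\nu_i)$ since $\nu_i p_i\phi = $ the $i$-th summand of $\nu\phi=0$, and the summands are independent because $\nu_i p_i = \nu\iota_i p_i$ and the $\iota_ip_i$ are orthogonal idempotents summing to $\id_A$. Then Theorem~\ref{thm:walton} for $A_i$ gives $J_i\hookrightarrow A_i$ is zero, i.e.\ $\phi_i=p_i\phi=0$ for every $i$; summing, $\phi=(\sum_i\iota_i p_i)\phi=0$. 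Hence $A$ is Frobenius.

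I expect the main obstacle to be the bookkeeping in the ``if'' direction: transferring an $A$-ideal structure on $I$ to an $A_i$-ideal structure on the image $J_i$ requires checking that $J_i$ is stable under the $A_i$-action and that the monic $J_i\hookrightarrow A_i$ satisfies $\phi\lambda=m(\id\otimes\lambda)$ at the level of $A_i$, which amounts to carefully composing with the idempotents $\iota_ip_i$ and using that $m_{12}$ is ``block diagonal'' on the summands. An alternative that sidesteps images is to note that in a multitensor category one can instead verify the Frobenius property through the coalgebra description directly: if each $A_i$ carries $(\Delta_i,\nu_i)$, then $A$ carries $\Delta:=\sum_i(\iota_i\otimes\iota_i)\Delta_i p_i$ and $\nu:=\sum_i\nu_i p_i$, and the Frobenius compatibility $(m\otimes\id)(\id\otimes\Delta)=\Delta m=(\id\otimes m)(\Delta\otimes\id)$ follows summand-by-summand from $p_j\iota_k=\delta_{jk}\id$ after expanding $m_{12}$ and $\Delta$; conversely, given $(\Delta,\nu)$ on $A$ one sets $\Delta_i:=(p_i\otimes p_i)\Delta\iota_i$ and $\nu_i:=\nu\iota_i$ and checks the same identities restrict. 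I would likely present this coalgebra-based argument as the main proof since it is the most transparent, and remark that it recovers and strengthens \cite[Proposition~3.21]{fuchs2002tft}.
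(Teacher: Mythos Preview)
Your proposal is correct and overlaps substantially with the paper's proof. For the ``only if'' direction your ideal-transfer argument via Theorem~\ref{thm:walton} is exactly what the paper does: set $\nu_i=\nu\iota_i$, push a left ideal $(I,\lambda_I,\phi_I)$ of $A_i$ to the left ideal $(I,\lambda_I(p_i\otimes\id),\iota_i\phi_I)$ of $A$, observe it factors through $\ker(\nu)$, and conclude $\phi_I=0$ from $p_i\iota_i=\id$. For the ``if'' direction the paper simply invokes \cite[Proposition~3.21]{fuchs2002tft}, i.e.\ the explicit coalgebra structure $\Delta=\sum_i(\iota_i\otimes\iota_i)\Delta_i p_i$, $\nu=\sum_i\nu_ip_i$ that you describe at the end; so your preferred route there coincides with the paper as well, and you can drop the image-based argument entirely.

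The one place where your plan genuinely diverges is your remark that one could also run the ``only if'' direction purely on the coalgebra side by setting $\Delta_i=(p_i\otimes p_i)\Delta\iota_i$. This works, but not by a symmetric ``the same identities restrict'' slogan: a priori there is no reason an arbitrary Frobenius comultiplication on $A$ should be block diagonal. The missing step is to show $(p_j\otimes\id)\Delta\iota_i=0$ for $j\neq i$, which follows from the Frobenius relation $\Delta m=(m\otimes\id)(\id\otimes\Delta)$ together with $p_j m(\iota_i\otimes-)=\delta_{ij}\,m_i(\id\otimes p_i-)$; the mirror relation gives the other tensor factor. Once $\Delta\iota_i=(\iota_i\otimes\iota_i)\Delta_i$ is established, counitality and coassociativity of $\Delta_i$ and the Frobenius compatibility for $A_i$ are indeed immediate. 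The paper sidesteps this by using Theorem~\ref{thm:walton} for that direction, which avoids having to argue about the shape of $\Delta$; your coalgebra route is a legitimate alternative but you should make this block-diagonality step explicit if you present it.
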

\begin{proof}
It suffices to prove the result when $|I|=2$, that is, when $A=A_1 \oplus A_2$.

($\Leftarrow$) If $(A_k,m_k,u_k,\Delta_k,\nu_k)\in \Frob(\C)$ for $k\in\{1,2\}$, then by \cite[Proposition~3.21]{fuchs2002tft}, we have that the algebra $(A_1\oplus A_2,m_{12},u_{12},\Delta_{12},\nu_{12})\in \Frob(\C)$ with 
\begin{equation*}
    \Delta_{12} = \sum_{k\in\{1,2\}} (\iota_k \otimes \iota_k)\Delta_k p_k \;\; \text{and} \;\; \nu_{12} = \sum_{k\in\{1,2\}} \nu_k p_k .
\end{equation*}

($\Rightarrow$)
By assumption, $(A=A_1\oplus A_2,m_{12},u_{12})$ is a Frobenius algebra in $\C$. Then, by Theorem~\ref{thm:walton}, there exists a morphism $\nu : A\rightarrow \unit$ such that $\textnormal{ker}(\nu)$ contains no left or right ideal of $A$. Set $\nu_k=\nu\circ \iota_k$ for $k\in\{1,2\}$. In order to prove that $A_1$ and $A_2$ are Frobenius, we will show that $\textnormal{ker}(\nu_k)$ contains no left or right ideal of $A_k$ for $k\in\{1,2\}$.

Suppose that the algebra $A_1$ admits a left ideal $(I, \lambda_1, \phi_1)$ that factors through $\textnormal{ker}(\nu_1)$. Let $\iota:\textnormal{ker}(\nu_1)\rightarrow A_1$ denote the inclusion map. Then, there exists a map $f:I\rightarrow \textnormal{ker}(\nu_1)$ such that the lower triangle ($\dagger$) in the following diagram commutes. 
\begin{equation*}\label{eq:frobdirectsum}
\xymatrix{
    & \textnormal{ker}(\nu) \ar[rd] & & \\
    \textnormal{ker}(\nu_1) \ar@{}[urr]|-(0.4){(\ddagger)} \ar[r]^-{\iota} \ar@{.>}[ru]^{g}  & A_1 \ar[r]^-{\iota_1} & A_1\oplus A_2 \ar[r]^-{\nu} & \unit \\
    I \ar[u]^{f} \ar[ru]_{\phi} \ar@{}[uur]|-(0.25){(\dagger)} & & &  
}
\end{equation*}
As the composition $\nu\,\iota_1\,\iota$ is equal to the zero map, by the universal property of the kernel ker($\nu$), there exists a unique map $g$ making the diagram $(\ddagger)$ commute. 
Now consider the tuple $(I,(\iota_1\otimes \id_I)\lambda, \iota_1\phi)$. As both $\iota_1$ and $\phi$ are monic, their composition $\iota_1\,\phi$ is monic as well. A straightforward check shows that $(I,(\iota_1\otimes \id_I)\lambda, \iota_1\phi)$ is a left ideal of $A_1\oplus A_2$ that factors through ker$(\nu)$. This contradicts the assumption that $A_1\oplus A_2$ is Frobenius by Theorem~\ref{thm:walton}. 

In a similar manner we can show that $A_1$ does not admit a right ideal that factors through $\textnormal{ker}(\nu_1)$. Repeating the same argument for $A_2$ we conclude that both $A_1$ and $A_2$ are Frobenius.
\end{proof}


\subsection{Drinfeld centers}\label{subsec:9drinfeld}
Let $\C$ be a monoidal category. Then the \textit{Drinfeld center} of $\C$, denoted $\Z(\C)$, is defined as the category with objects as pairs $(X,\sigma)$, where $X$ is an object in $\C$, and $\sigma=\{\sigma_Y:Y\otimes X\rightarrow X\otimes Y\}_{Y\in \C}$ is a natural isomorphism (called a \textit{half-braiding}) satisfying 
\[\sigma_{Y\otimes Z} = (\sigma_Y\otimes \id_Z)(\id_Y \otimes  \sigma_Z) ,\hspace{1cm} (Y,Z\in \C).\]
Morphisms $(X,\sigma)\rightarrow (Y,\sigma')$ are given by $f\in \Hom_{\C}(X,Y)$ satisfying $(f\otimes \id_Z)\sigma_Z = \sigma'_Z(\id_Z\otimes f)$. The monoidal product is $(X,\sigma)\otimes (Y,\sigma') = (X\otimes Y, \gamma$) where $\gamma_Z:= (\id_X\otimes \sigma'_Z) (\sigma_Z\otimes\id_Y)$. We have the forgetful functor
\begin{equation*}
    U_{\C}:\Z(\C)\rightarrow \C, \hspace{1cm} (X,\sigma) \mapsto X ,
\end{equation*}
which is strong monoidal. When $\C$ is pivotal, we can equip $\Z(\C)$ with a pivotal structure making $U_{\C}$ a pivotal functor \cite[Exercise~7.13.6]{etingof2016tensor}. Unless stated otherwise, we will consider only this pivotal structure on $\Z(\C)$.

Drinfeld centers are important because they are braided monoidal categories with braiding $c_{(X,\sigma),(Y,\sigma')}:=\sigma'_X$. 
Furthermore, if $\C$ is a (finite) tensor category, then $\Z(\C)$ is a braided (finite) tensor category and $U_{\C}$ is a $\kk$-linear, exact functor.
The map $(X,\sigma)\mapsto (X,\sigma^{-1}): \Z(\C^{\rev})\rightarrow \Z(\C) $ is a canonical equivalence of monoidal categories \cite[Exercise~7.13.5]{etingof2016tensor}. If we take the braidings into account as well, then this is a braided equivalence $ \Z(\C^{\rev}) \rightarrow \Z(\C)^{\mir}$ \cite[Exercise~8.5.2]{etingof2016tensor}.

Let $\C$ be a finite tensor category. Then, the forgetful functor $U_{\C}$ admits a right adjoint $R_{\C}$, which is monoidal. 
The following theorem collects important known results which we will need later. 

\begin{theorem}\label{thm:adjAlg}
Let $\C$ be a finite tensor category. Then, we get the following results.
\begin{enumerate}
    \item[\upshape{(a)}] The algebra $R_{\C}(\unit)$ in $\Z(\C)$ is commutative.
    \item[\upshape{(b)}] The algebra $R_{\C}(\unit)$ is a Frobenius algebra in $\Z(\C)$ if and only if $\C$ is unimodular.
    \item[\upshape{(c)}] If $\C$ is pivotal and unimodular, then $R_{\C}(\unit)$ is a symmetric Frobenius algebra in $\Z(\C)$. 
\end{enumerate}
\end{theorem}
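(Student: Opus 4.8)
The plan is to obtain all three assertions from the adjunction $U_{\C}\dashv R_{\C}$, the coend presentation of $R_{\C}$, and the Radford isomorphism \eqref{eq:Radford}; since the statement collects known facts, the argument really amounts to assembling the relevant pieces. Since $U_{\C}$ is strong monoidal, $R_{\C}$ is lax monoidal, hence carries the trivial algebra $\unit_{\C}$ to an algebra $\mathbb{A}:=R_{\C}(\unit_{\C})$ in $\Z(\C)$. For part (a), I would invoke the standard coend presentation $U_{\C}(\mathbb{A})\cong\int_{X\in\C}X\otimes\lv X$, with half-braiding assembled from the universal dinatural family and multiplication induced by fusing the two $X$-slots, and then verify commutativity $m\circ c_{\mathbb{A},\mathbb{A}}=m$ by precomposing with the universal dinatural transformations and using the half-braiding axiom; this is classical and I would simply cite it, e.g.\ via \cite{shimizu2020further} and the references therein.

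For part (b) the key input is the comparison of the two adjoints of $U_{\C}$. Since $\Z(\C)$ and $\C$ are finite tensor categories and $U_{\C}$ is exact, $U_{\C}$ also admits a left adjoint $L_{\C}$, which is colax monoidal with $U_{\C}(L_{\C}(\unit_{\C}))\cong\int^{X\in\C}\lv X\otimes X$. The distinguished invertible object $D=D_{\C}$ lifts canonically to an invertible object $\widehat{D}\in\Z(\C)$, its half-braiding built from the Radford isomorphism $\fR$ of \eqref{eq:Radford}, and the comparison I would use is the relation $R_{\C}\cong L_{\C}(\widehat{D}\otimes(-))$, the Drinfeld-center incarnation of the relative Serre/Nakayama relation of Theorem~\ref{thm:NakSerre}. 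For $(\Leftarrow)$: if $\C$ is unimodular then $D\cong\unit_{\C}$, so $\widehat{D}\cong\unit_{\Z(\C)}$ and $R_{\C}\cong L_{\C}$; thus $R_{\C}$ is simultaneously the left and the right adjoint of $U_{\C}$, and the lax and colax monoidal structures it inherits from the two (co)units of this ambidextrous adjunction satisfy the Frobenius-monoidal compatibility axioms, forced by the zig-zag identities. Hence $R_{\C}$ is a Frobenius monoidal functor and $\mathbb{A}$ is a Frobenius algebra. For $(\Rightarrow)$: if $\mathbb{A}$ is Frobenius it is self-dual, and dualizing the coend presentations interchanges $R_{\C}(\unit_{\C})$ and $L_{\C}(\unit_{\C})$ up to duals, so $L_{\C}(\unit_{\C})\cong\lv\mathbb{A}\cong\mathbb{A}$; feeding this into $R_{\C}\cong L_{\C}(\widehat{D}\otimes(-))$ forces $\widehat{D}\cong\unit_{\Z(\C)}$, i.e.\ $D\cong\unit_{\C}$. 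I would cite \cite{shimizu2016unimodular} for the substance of this equivalence.

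For part (c), assume additionally that $\C$ is pivotal; then $\Z(\C)$ carries its canonical pivotal structure, $U_{\C}$ is a pivotal functor, and the ambidextrous adjunction of part (b) can be taken to respect the pivotal structures. Consequently the left and right Frobenius forms one could build on $\mathbb{A}$, which a priori differ by the pivotal automorphism $\fp_{\mathbb{A}}$, coincide, and unwinding this is precisely the symmetry condition \eqref{eq:symAlg}. One can also record that $\mathbb{A}$ is connected, since $\Hom_{\Z(\C)}(\unit_{\Z(\C)},\mathbb{A})\cong\Hom_{\C}(\unit_{\C},\unit_{\C})\cong\kk$, and commutative by (a); in a pivotal braided setting the Frobenius counit $\nu$, which is $U_{\C}$-related to a morphism built from $\ev$ and $\coev$, is automatically symmetric once $\widehat{D}\cong\unit$. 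I would again cite the relevant statements in \cite{shimizu2016unimodular,shimizu2019relative}.

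The only step that is not pure bookkeeping is part (b): pinning down the canonical lift $\widehat{D}$ and the relation $R_{\C}\cong L_{\C}(\widehat{D}\otimes(-))$, verifying that $R_{\C}\cong L_{\C}$ genuinely upgrades to a Frobenius monoidal functor (compatibility of the two structures), and, for the converse, promoting the single object isomorphism $\lv\mathbb{A}\cong\mathbb{A}$ to the functorial statement $\widehat{D}\cong\unit$ --- the latter does not follow object-wise outside the fusion case and must be tracked through the natural isomorphism $R_{\C}\cong L_{\C}(\widehat{D}\otimes(-))$. In the final write-up I expect to dispatch these points by invoking the cited results rather than reproving them.
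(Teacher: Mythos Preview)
Your proposal is correct and matches the paper's approach: the paper treats this theorem as a pure collection of known results and dispatches it entirely by citation, namely \cite[Proposition~6.1]{bruguieres2011exact} for (a), \cite[Theorem~5.6(3)]{shimizu2016unimodular} for (b), and \cite[Lemma~7.1]{schweigert2022homotopy} for (c). Your sketch of the underlying arguments is accurate and your stated intent to ``dispatch these points by invoking the cited results rather than reproving them'' is exactly what the paper does; the only minor divergence is in the choice of references for (a) and (c).
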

\begin{proof}
Parts (a) follow from \cite[Proposition~6.1]{bruguieres2011exact} and part (b) from \cite[Theorem~5.6(3)]{shimizu2016unimodular}.
Part (c) is \cite[Lemma~7.1]{schweigert2022homotopy}. 
\end{proof}


\section{Unimodular module categories}\label{sec:unimodular}
This section is devoted to studying unimodular module categories. We  provide some basic properties and collect various characterizations of unimodular module categories in Section~\ref{subsec:uniDefn}. In Section~\ref{subsec:frobUni}, we employ unimodular module categories to construct Frobenius algebras in the Drinfeld center. In Section~\ref{subsec:pivUni}, we construct symmetric Frobenius algebras using pivotal categories.

\subsection{Definition and basic properties}\label{subsec:uniDefn}
Let $\C$ be a finite tensor category and $\M$ an exact left $\C$-module category. Recall that $\dS$ and $\dN$ are left $\C$-module functors with module constraints $\fs$ and $\fn$ given in Theorem~\ref{thm:Serre}(c) and Theorem~\ref{thm:Nak}(a), respectively. 
Thus, $\dS\,\dN$ is left $\C$-module endofunctor of $\M$ with module constraints
\begin{equation*}\label{eq:NSmoduleConstriant}
    \fd_{X,M}:\dS\,\dN(X\tr M) \xrightarrow{\dS(\fn_{X,M})}\dS(X\rv\rv\tr \dN(M)) \xrightarrow{\fs_{X\rv\rv,\dN(M)}} X\tr \dS\,\dN(M) 
\end{equation*}
Similarly, $\overline{\dN}\, \overline{\dS} \in \Rex_{\C}(\M)$. Using Theorems~\ref{thm:Nak}(b) and \ref{thm:Serre}(b), we get that $\dS\,\dN = (\overline{\dN}\,\overline{\dS})^{-1}$ in $\Rex_{\C}(\M)$. It was shown in \cite[Proposition~4.13]{spherical2022} (see also \cite[Corollary~6.13]{shimizu2022Nakayama}) that $D_{\Rex_{\C}(\M)} \, \cong_{\C} \,  \overline{\dN}\,\overline{\dS}$. Thus, 
\begin{equation}\label{eq:uniModCat}
    \Rex_{\C}(\M) \text{ is unimodular } \iff  \overline{\dN}\,\overline{\dS} \, \cong_{\C}\, \id_{\M} \iff \dS\,\dN\, \cong_{\C}\, \id_{\M}.
\end{equation}  
The above discussion motivates the following definition.

\begin{definition}\label{defn:uniModCat}
A \textit{unimodular structure} on an exact left $\C$-module category $\M$ is a $\C$-module natural isomorphism $\fu: \id_{\M} \rightarrow \dS\;\dN$, namely an isomorphism $\fu:\id_{\M} \rightarrow \dS\;\dN$ of functors such that (\ref{eq:unimodularModule}) commutes for all $X\in \C$ and $M\in \M$.
\begin{equation}\label{eq:unimodularModule}
\begin{gathered}
\xymatrix{
    X\tr M \ar[d]_{\fu_{X\tr M}} \ar[r]^-{\id_X\tr \fu_M} & X\tr \dS\;\dN(M) \\
    \dS\;\dN(X\tr M) \ar[r]_-{\dS(\fn_{X,M})} & \dS(X\rv\rv \tr \dN(M)) \ar[u]_{\fs_{\textnormal{$X\rv\rv, \dN(M)$}}} 
}
\end{gathered}
\end{equation}
An exact, left $\C$-module category is called \textit{unimodular} if it admits a unimodular structure $\fu$. 
\end{definition}

\begin{lemma}\label{lem:uniDefn}
Definitions~\ref{defn:unimodular} and \ref{defn:uniModCat} of a unimodular module category are equivalent.   
\end{lemma}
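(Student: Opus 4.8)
The plan is to unwind both definitions and show they assert the same thing. Definition~\ref{defn:unimodular} says $\M$ is unimodular when $\Rex_{\C}(\M)$ is a unimodular multitensor category, i.e.\ $D_{\Rex_{\C}(\M)} \cong \id_{\M}$ in $\Rex_{\C}(\M)$. Definition~\ref{defn:uniModCat} says $\M$ is unimodular when it admits a $\C$-module natural isomorphism $\fu:\id_{\M}\to \dS\,\dN$ compatible with the module constraints, i.e.\ $\id_{\M} \cong_{\C} \dS\,\dN$ in $\Rex_{\C}(\M)$. So what I really need is the chain of equivalences already recorded in \eqref{eq:uniModCat}, namely that $\Rex_{\C}(\M)$ is unimodular if and only if $\dS\,\dN \cong_{\C} \id_{\M}$, together with the observation that an isomorphism in the category $\Rex_{\C}(\M)$ is exactly a $\C$-module natural isomorphism, and that such an isomorphism $\fu:\id_\M \to \dS\,\dN$ being a morphism in $\Rex_{\C}(\M)$ is precisely the commutativity of the hexagon \eqref{eq:unimodularModule} (with the module constraints on $\id_{\M}$ being trivial and on $\dS\,\dN$ being the composite $\fd_{X,M}$ built from $\fn$ and $\fs$).

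Concretely, I would proceed as follows. First, recall that by \cite[Proposition~4.13]{spherical2022} (cited in the excerpt), $D_{\Rex_{\C}(\M)} \cong_{\C} \overline{\dN}\,\overline{\dS}$, and by Theorems~\ref{thm:Nak}(b) and \ref{thm:Serre}(b), $\overline{\dN}\,\overline{\dS}$ is quasi-inverse to $\dS\,\dN$ in $\Rex_{\C}(\M)$; hence $D_{\Rex_{\C}(\M)} \cong_{\C} \id_{\M}$ if and only if $\dS\,\dN \cong_{\C} \id_{\M}$, which is exactly \eqref{eq:uniModCat}. Second, observe that the definition of the multitensor category $\Rex_{\C}(\M)$ has as its objects right exact $\C$-module endofunctors and as its morphisms $\C$-module natural transformations; therefore ``$\dS\,\dN \cong_{\C} \id_{\M}$'' means precisely ``there exists a $\C$-module natural isomorphism $\fu$ between $\id_{\M}$ and $\dS\,\dN$.'' Third, spell out what it means for a natural isomorphism $\fu:\id_{\M}\to\dS\,\dN$ of the underlying functors to be $\C$-module: by the definition of a $\C$-module natural transformation, using that the module constraint of $\id_{\M}$ is the identity and that of $\dS\,\dN$ is $\fd_{X,M} = \fs_{X\rv\rv,\dN(M)}\circ \dS(\fn_{X,M})$, the module condition reads $\fd_{X,M}\circ \fu_{X\tr M} = (\id_X \tr \fu_M)$, which is exactly the commutativity of diagram \eqref{eq:unimodularModule}. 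Combining the three steps, Definition~\ref{defn:uniModCat}'s ``$\M$ admits a unimodular structure'' is equivalent to ``$\dS\,\dN\cong_{\C}\id_{\M}$,'' which by \eqref{eq:uniModCat} is equivalent to ``$\Rex_{\C}(\M)$ is unimodular,'' which is Definition~\ref{defn:unimodular}.

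I do not expect any genuine obstacle here; the lemma is essentially a bookkeeping translation between two phrasings, and the substantive input — the identification $D_{\Rex_{\C}(\M)}\cong_{\C}\overline{\dN}\,\overline{\dS}$ and the resulting chain \eqref{eq:uniModCat} — has already been established in the excerpt. The one point requiring a little care is making sure the module constraint on the composite functor $\dS\,\dN$ is correctly identified as $\fd_{X,M}$ (built by first applying $\dS$ to $\fn_{X,M}$ and then applying $\fs$ at the object $X\rv\rv \tr \dN(M)$), so that the $\C$-module-naturality equation for $\fu$ literally becomes \eqref{eq:unimodularModule}; this is exactly how $\fd$ was defined just before Definition~\ref{defn:uniModCat}, so it is immediate. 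Thus the proof is just the assembly of these observations.
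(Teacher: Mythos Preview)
Your proposal is correct and follows essentially the same approach as the paper's proof, which simply invokes Definition~\ref{defn:unimodular} and equation~\eqref{eq:uniModCat} to conclude. You have additionally spelled out explicitly why the $\C$-module naturality condition for $\fu$ coincides with the commutativity of diagram~\eqref{eq:unimodularModule}, which the paper leaves implicit; this extra care is fine and does not constitute a different method.
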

\begin{proof}
According to Definition~\ref{defn:unimodular}, $\M$ is unimodular if and only if $\Rex_{\C}(\M)$ is unimodular.
By equation (\ref{eq:uniModCat}), $\Rex_{\C}(\M)$ is unimodular if and only if there exists a $\C$-module natural isomorphism $\fu:\id_{\M}\rightarrow \dS\,\dN$, that is a unimodular structure on $\M$. Thus, the claim follows.
\end{proof}

\begin{remark}\label{rem:uniDefn}
Since $\dN$ is an equivalence with quasi-inverse $\overline{\dN}$ by Theorem~\ref{thm:Nak}(b), $\M$ is unimodular if and only if there exists a natural isomorphism $\overline{\dN}\cong \dS$ of $\C$-module functors. Thus, Definition~\ref{defn:uniModCat} is the same as the one suggested in \cite[Remark~4.27]{fuchs2020eilenberg}.
\end{remark}

Next, we present some examples of unimodular module categories.
\begin{example}\label{ex:uniModCat}
\upshape{(i)} Let $\C$ be a finite tensor category. Then, $\M=\C$ considered as a left $\C$-module category is unimodular if and only if $\C$ is a unimodular tensor category. 
For instance, if $\C$ is a semisimple finite tensor category, it is unimodular by \cite[Corollary~6.4]{etingof2004analogue}.
\smallskip

\noindent
\upshape{(ii)} Let $\C$ be a nondegenerate (\textit{i.e.} global dimension of $\C$ is nonzero) fusion category. For instance, the category $\Rep(H)$ for $H$ a semisimple and cosemisimple Hopf algebra is a nondegenerate fusion category. 
If $\M$ is a semisimple left $\C$-module category, then by \cite[Theorem~2.18]{etingof2005fusion}, $\Rex_{\C}(\M)$ is multifusion. Hence, by Definition~\ref{defn:unimodular}, $\M$ is a unimodular left $\C$-module category.

\smallskip

\noindent
\upshape{(iii)} Let $H$ be a finite dimensional Hopf algebra. Then the category $\C=\Rep(H)$ is a finite tensor category. The forgetful functor $F:\Rep(H)\rightarrow\Vect$ turns $\Vect$ into a left $\Rep(H)$-module category. By \cite[Example~7.12.26]{etingof2016tensor}, we have that $\Rex_{\C}(\Vect) \cong \Rep(H^*)$, where $H^*$ is the dual Hopf algebra of $H$. Therefore, $\Vect$ is a unimodular $\Rep(H)$-module category if and only if $H^*$ is a unimodular Hopf algebra. This happens if and only if the distinguished grouplike element $g_H$ of $H$ is equal to its unit $1_H$. 
\end{example}

In Section~\ref{sec:unimodular}, we generalize Example~\ref{ex:uniModCat}(iii) and classify unimodular $\Rep(H)$-module categories over for any finite dimensional Hopf algebra $H$. 

\smallskip

The next remark shows that tensor categories that are not unimodular can admit unimodular module categories.

\begin{remark}
Let $\C$ be a finite tensor category that is not unimodular ($D_{\C}\ncong \unit$). For instance, one can take $\C$ to be the category of representation of the Taft algebra. Now, consider the category $\D=\C\boxtimes \C^{\rev}$. Then, we get that
\begin{equation*}
    D_{\C\boxtimes \C^{\rev}} = \odN_{\C\boxtimes \C^{\rev}}(\unit_{\C\boxtimes \C^{\rev}}) 
    \stackrel{(\dagger)}{\cong}  
    (\odN_{\C}\boxtimes \odN_{\C^{\rev}})(\unit\boxtimes \unit) 
    \stackrel{(\ddagger)}{\cong} 
    \odN_{\C}(\unit)\boxtimes \odN_{\C}(\unit) 
    = 
    D_{\C}\boxtimes D_{\C} 
    \stackrel{}{\ncong}
    \unit\boxtimes\unit.
\end{equation*}
Here, the isomorphism $(\dagger)$ follows from \cite[Proposition~3.20]{fuchs2020eilenberg}. Since, $\C=\C^{\rev}$ as categories, we have that $\odN_{\C}=\odN_{\C^{\rev}}$. Thus, the isomorphism $(\ddagger)$ holds.
Consequently, we get that $\D$ is not unimodular. 
Now, consider the left $\D$-module category $\M:=\C$ with actions defined as 
\[(X\boxtimes Y) \tr M:= X\otimes M\otimes Y.\] 
Then, we claim that the $\D$-module category $\M$ is unimodular. Indeed, by \cite[7.13.8]{etingof2016tensor}, we have that $\Rex_{\D}(\M)\cong \Z(\C)$. Further, by \cite[Proposition~8.6.3]{etingof2016tensor}, $\Z(\C)$ is factorizable and by \cite[8.10.10]{etingof2016tensor}, factorizable finite tensor categories are unimodular. Thus, $\Z(\C)$ is unimodular. Equivalently, $\M$ is a unimodular $\D$-module category. 
\end{remark}


Next, we collect a few basic results about unimodular module categories. The following result shows that unimodular module categories are closed under direct sums. 

\begin{lemma}\label{lem:uniDirectSum}
    Suppose that $\M=\oplus_{i\in I} \M_i$ where the set $I$ is finite and each $\M_i$ is an indecomposable, exact left $\C$-module category. Then $\M$ is unimodular if and only each $\M_i$ is unimodular.
\end{lemma}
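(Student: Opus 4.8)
The plan is to reduce the statement to an analogous fact about multitensor categories, namely Lemma~\ref{lem:uniMulti}, by identifying $\Rex_{\C}(\M)$ with a direct sum of the categories $\Rex_{\C}(\M_i)$ plus extra ``off-diagonal'' component subcategories. First I would observe that since $\M = \oplus_{i\in I}\M_i$, a right exact $\C$-module endofunctor $F$ of $\M$ decomposes as a matrix $(F_{ij})_{i,j\in I}$ with $F_{ij} := p_i\circ F\circ \iota_j \in \Rex_{\C}(\M_j,\M_i)$, where $\iota_j,p_i$ are the inclusion and projection $\C$-module functors; composition of endofunctors corresponds to matrix multiplication. This exhibits a monoidal equivalence $\Rex_{\C}(\M) \cong \bigoplus_{i,j\in I}\Rex_{\C}(\M_j,\M_i)$ as a finite multitensor category, whose component subcategories (in the sense of Section~\ref{subsec:5inthom}, after writing $\unit = \id_{\M} = \oplus_i \id_{\M_i}$ with each $\id_{\M_i}$ simple since $\M_i$ is indecomposable) are exactly $\Rex_{\C}(\M_j,\M_i)$, and the diagonal components $\C_{ii}$ are precisely the finite tensor categories $\Rex_{\C}(\M_i)$.

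Next I would apply Lemma~\ref{lem:uniMulti}: $\Rex_{\C}(\M)$ is unimodular if and only if every diagonal component $\Rex_{\C}(\M_i)$ is unimodular. By Definition~\ref{defn:unimodular} (equivalently Definition~\ref{defn:uniModCat}, via Lemma~\ref{lem:uniDefn}), $\Rex_{\C}(\M_i)$ being unimodular is exactly the statement that $\M_i$ is a unimodular module category. Combining these two equivalences with the observation that $\M$ is unimodular iff $\Rex_{\C}(\M)$ is unimodular yields the claim. Alternatively, and perhaps more cleanly, I could argue directly via Nakayama and Serre functors: for a finite abelian category $\M = \oplus_i \M_i$ one has $\dN_{\M} = \oplus_i \dN_{\M_i}$ and $\overline{\dS}_{\M} = \oplus_i \overline{\dS}_{\M_i}$ (using \cite[Proposition~3.20]{fuchs2020eilenberg} for the Nakayama functor and the analogous additivity of relative Serre functors, which follows from the additivity of internal Homs across a module-category direct sum), so that $\overline{\dN}_{\M}\,\overline{\dS}_{\M} \cong_{\C}\id_{\M}$ holds iff $\overline{\dN}_{\M_i}\,\overline{\dS}_{\M_i}\cong_{\C}\id_{\M_i}$ for each $i$, which by \eqref{eq:uniModCat} is equivalent to unimodularity of each $\M_i$; here one uses that a $\C$-module natural isomorphism between functors respecting the decomposition must be ``block diagonal,'' since $\Hom$ between functors landing in different summands vanishes.

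The main obstacle I anticipate is the bookkeeping needed to justify that the component subcategories of $\Rex_{\C}(\M)$ are genuinely the $\Rex_{\C}(\M_j,\M_i)$ with the correct monoidal/matrix structure, and in particular that $\id_{\M_i}$ is simple in $\Rex_{\C}(\M_i)$ --- this uses that $\M_i$ indecomposable and exact forces $\End_{\C}(\id_{\M_i}) \cong \kk$, i.e. $\Rex_{\C}(\M_i)$ is a finite tensor category (not merely multitensor). This is standard (it is essentially \cite[Example~7.12.26]{etingof2016tensor} together with the matrix description of module functor categories) but should be stated carefully. Given the availability of Lemma~\ref{lem:uniMulti}, I expect the cleanest writeup to be the first route: set up the multitensor decomposition of $\Rex_{\C}(\M)$, identify diagonal components with $\Rex_{\C}(\M_i)$, and invoke Lemma~\ref{lem:uniMulti}; the direct Nakayama/Serre computation can be mentioned as an alternative. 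Everything else is routine, so I would not belabor the coherence checks for the matrix monoidal structure.
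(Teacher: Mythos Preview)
Your proposal is correct and follows essentially the same route as the paper: identify the component subcategories of the multitensor category $\Rex_{\C}(\M)$ as $\Rex_{\C}(\M_i,\M_j)$ (the paper invokes \cite[Lemma~7.12.6]{etingof2016tensor} for this rather than spelling out the matrix decomposition), then apply Lemma~\ref{lem:uniMulti} to reduce unimodularity of $\Rex_{\C}(\M)$ to unimodularity of each diagonal $\Rex_{\C}(\M_i)$. Your alternative direct argument via additivity of $\dN$ and $\dS$ is also sound but is not the path taken in the paper.
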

\begin{proof}
Since $\M$ is decomposable, $\D:=\Rex_{\C}(\M)$ is a multitensor category. Thus, we can write $\D$ as a direct sum of its component subcategories $\D_{ij}:=\Rex_{\C}(\M)_{ij}$.
But $\D_{ij}=\Rex_{\C}(\M_i,\M_j)$ by Lemma~\cite[Lemma~7.12.6]{etingof2016tensor}. Now, by the following equivalences, the claim follows.
\begin{align*}
    \M\; \text{unimodular} \;\; 
    \iff \; D_{\D}\cong_{\C}\id_{\M} \; \;
    & \stackrel{(\textnormal{\ref{lem:uniMulti}})}{\iff} \;\; D_{\D_{ii}}\cong_{\C} \id_{\M_i} \; \forall i\in I \\
    &\iff \;\; \M_i \;\text{unimodular}\; \forall i\in I.
    \qedhere
\end{align*}
\end{proof}

\begin{lemma}
Let $\M$ be an indecomposable, exact, left $\C$-module category. Then, a unimodular structure on $\M$, if it exists, is unique up to a scalar multiple.
\end{lemma}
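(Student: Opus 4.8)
The plan is to reduce the uniqueness claim to a computation of the endomorphism ring of the identity functor in the category $\Rex_{\C}(\M)$. Recall from Definition~\ref{defn:uniModCat} that a unimodular structure is precisely a $\C$-module natural isomorphism $\fu:\id_{\M}\rightarrow \dS\,\dN$. Suppose $\fu,\fu':\id_{\M}\rightarrow \dS\,\dN$ are two such isomorphisms. First I would form the composite $\fu^{-1}\circ\fu':\id_{\M}\rightarrow\id_{\M}$, which is an automorphism of $\id_{\M}$ in the category $\Rex_{\C}(\M)$; here I use that the diagram (\ref{eq:unimodularModule}) being satisfied by both $\fu$ and $\fu'$ forces the composite to be a $\C$-module natural transformation, and that $\fu$ is invertible as a morphism of functors (so $\fu^{-1}$ makes sense and is again a $\C$-module natural transformation, since the inverse of a module natural isomorphism is one). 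Thus it suffices to show $\End_{\Rex_{\C}(\M)}(\id_{\M})\cong\kk$.

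The key point is that $\id_{\M}$ is the unit object of the monoidal category $\Rex_{\C}(\M)$, and that when $\M$ is indecomposable and exact, $\Rex_{\C}(\M)$ is a finite tensor category (not merely a multitensor category) — this is exactly the indecomposability hypothesis, via \cite[Lemma~7.12.6]{etingof2016tensor} and the surrounding discussion, which identifies $\Rex_{\C}(\M)$ with the dual category $\C^*_{\M}$. In a finite tensor category over an algebraically closed field $\kk$, the unit object is simple, and a finite abelian category over $\kk$ has $\End$ of every simple object equal to $\kk$ (Schur's lemma, using that $\kk$ is algebraically closed). Hence $\End_{\Rex_{\C}(\M)}(\id_{\M})\cong\kk$, so any $\C$-module automorphism of $\id_{\M}$ is a scalar, and therefore $\fu'=\lambda\,\fu$ for some $\lambda\in\kk$; since $\fu'$ is an isomorphism, $\lambda\in\kkx$. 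This gives the uniqueness up to a scalar multiple.

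I expect the only real subtlety to be confirming that indecomposability of $\M$ (together with exactness) indeed yields simplicity of $\unit_{\Rex_{\C}(\M)}=\id_{\M}$; that is, that $\Rex_{\C}(\M)$ is a genuine finite \emph{tensor} category rather than a multitensor category with a decomposable unit. This is standard — it is the module-category analogue of the statement that the dual of a finite tensor category with respect to an indecomposable exact module category is again a finite tensor category, cf.\ \cite[Theorem~7.12.11]{etingof2016tensor} — so I would simply cite it. Everything else (invertibility of module natural isomorphisms, the passage from "automorphism of the unit" to "scalar") is routine. One could alternatively phrase the argument entirely in terms of the equivalent reformulation of Remark~\ref{rem:uniDefn}: a unimodular structure amounts to an isomorphism $\overline{\dN}\cong_{\C}\dS$, and two such differ by an element of $\End_{\Rex_{\C}(\M)}(\overline{\dN})\cong\End_{\Rex_{\C}(\M)}(\id_{\M})\cong\kk$, since $\overline{\dN}$ is invertible in $\Rex_{\C}(\M)$ by Theorem~\ref{thm:Nak}(b) and conjugation by an invertible object is an algebra isomorphism on endomorphism rings; but the argument via the unit object is cleanest.
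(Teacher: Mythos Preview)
Your proposal is correct and follows essentially the same route as the paper: the paper also observes that indecomposability of $\M$ makes $\id_{\M}$ simple in $\Rex_{\C}(\M)$, so by Schur's Lemma the composite $\fu'\circ\fu^{-1}$ (your $\fu^{-1}\circ\fu'$) is a scalar. The paper's proof is just a terser version of yours, omitting the explicit justification (which you supply via \cite[Theorem~7.12.11]{etingof2016tensor}) that $\Rex_{\C}(\M)$ is a finite tensor category with simple unit.
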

\begin{proof}
Since $\M$ is indecomposable, $\id_{\M}$ is a simple object in $\Rex_{\C}(\M)$. Now, given two unimodular structures $\fu$ and $\fu'$ on $\M$, $\fu'\circ \fu^{-1}$ is an endomorphism of $\id_{\M}$. Hence, by Schur's Lemma,
\begin{equation*}
    \fu'\circ \fu^{-1} = k\;  \id_{\M} \text{ for some $k\in \kk$} \hspace{0.5cm} \Rightarrow \hspace{0.5cm} \fu'=k\; \fu . \qedhere
\end{equation*} 
\end{proof}

\begin{proposition}
Let $\M$ be a unimodular left $\C$-module category satisfying $\dN\cong\id_{\M}$. Then, for any $\M\in\M$, the internal End object $\uHom(M,M)$, is a Frobenius algebra in $\C$.
\end{proposition}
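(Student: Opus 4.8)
The plan is to exploit the characterization of Frobenius algebras via relative Serre functors together with the hypothesis $\dN_{\M}\cong\id_{\M}$. Recall from the discussion preceding Definition~\ref{defn:uniModCat} that $\M$ being unimodular means $\dS_{\M}\,\dN_{\M}\cong_{\C}\id_{\M}$; combining this with $\dN_{\M}\cong\id_{\M}$ gives $\dS_{\M}\cong_{\C}\id_{\M}$, i.e.\ $\M$ has trivial relative Serre functor as a $\C$-module functor. (One should be slightly careful here: the isomorphism $\dS\dN\cong\id$ is a $\C$-module isomorphism, the isomorphism $\dN\cong\id$ needs to be one too, or at least compatible enough; but $\dN$ being a $\C$-module functor $\M\to{}_{(-)\rv\rv}\M$ means an isomorphism $\dN\cong\id_{\M}$ of $\C$-module functors forces the twist $(-)\rv\rv$ to be trivial on the relevant objects — this is a point to check, but it should follow since $\dN\cong\id$ implies $\overline{\dN}\cong\id$ and then $\dS\cong\dS\dN\overline{\dN}\cong\overline{\dN}\cong\id$.)

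With $\dS_{\M}\cong_{\C}\id_{\M}$ in hand, the key input is the defining natural isomorphism of the relative Serre functor, $\phi^r_{M,M'}\colon \lv\uHom(M,M')\to\uHom(M',\dS(M))$. Specializing to $M'=M$ and using $\dS(M)\cong M$, this yields a natural isomorphism $\lv\uHom(M,M)\cong\uHom(M,M)$, which says precisely that the algebra object $A:=\uHom(M,M)$ is isomorphic to its own left dual as an object of $\C$. More precisely, the plan is to produce from $\phi^r$ (composed with the isomorphism $\dS\cong\id$) a nondegenerate pairing $A\otimes A\to\unit$, or equivalently a morphism $\nu\colon A\to\unit$, and then verify the Frobenius condition. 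The cleanest route is: first recall that $\uHom(M,M)$ is always an algebra in $\C$ via the composition maps $\ucomp\colon\uHom(M',M'')\otimes\uHom(M,M')\to\uHom(M,M'')$ with unit $\coev\colon\unit\to\uHom(M,M)$; then use that the isomorphism $A\cong\lv A$ coming from $\phi^r$ is compatible with this algebra structure in the sense required to make $A$ Frobenius — this is exactly the kind of statement assembled in \cite{shimizu2019relative,fuchs2020eilenberg} relating Serre functors to Frobenius structures on internal Homs.

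Concretely I would proceed via Theorem~\ref{thm:walton}: to show $A=\uHom(M,M)$ is Frobenius it suffices to exhibit $\nu\colon A\to\unit$ such that no nonzero left or right ideal of $A$ factors through $\ker(\nu)$. Take $\nu$ to be the composite $\uHom(M,M)\xrightarrow{\sim}\uHom(M,\dS M)\xrightarrow{(\phi^r)^{-1}}\lv\uHom(M,M)=\lv A$, postcomposed with $\ev$ against the unit of $A$ — more directly, $\nu$ is the image of $\id_M$ under the isomorphism $\Hom_{\M}(\unit\tr M\otimes\text{(something)},M)\cong\Hom_{\C}(-,\unit)$ that the trivial Serre functor provides. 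The left ideals of $A$ correspond, via the internal-Hom adjunction, to subobjects of $M$ in $\M$ (cf.\ the dictionary between submodules of $\uHom(M,M)$ and subobjects of $M$), and $\ker(\nu)$ failing to contain a nonzero such subobject reduces to the nondegeneracy of $\phi^r$, which holds because $\phi^r$ is an \emph{isomorphism}. The symmetric argument with the left relative Serre functor $\overline{\dS}$ (also trivial, since $\overline{\dS}$ is quasi-inverse to $\dS$) handles right ideals. The main obstacle I anticipate is bookkeeping: matching the abstract natural isomorphism $\phi^r$ — which involves the left dual $\lv(-)$ and the twist $X\rv\rv$ in the module constraints — against the concrete algebra structure maps on $\uHom(M,M)$, and checking that the $\C$-module naturality of the isomorphism $\dS\cong\id$ is precisely what forces $\nu$ to interact correctly with the multiplication; the unimodularity hypothesis is used exactly at this compatibility step, not merely to get $\dS\cong\id$ as bare functors.
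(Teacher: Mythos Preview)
Your core idea---combine $\dS\dN\cong\id$ with $\dN\cong\id$ to get $\dS(M)\cong M$, then use the relative Serre isomorphism $\phi^r$ to conclude that $\uHom(M,M)$ is Frobenius---matches the paper's. The paper, however, stops one step earlier than you do: once it has an isomorphism $p_M\colon M\to\dS(M)$ for each $M\in\M$, it simply invokes \cite[Theorem~3.14]{shimizu2019relative}, which says exactly that $\uHom(M,M)$ is Frobenius in $\C$ whenever $M\cong\dS(M)$. You correctly identify that such a statement lives in \cite{shimizu2019relative,fuchs2020eilenberg}, so you could just cite it and be done.

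Two comments on where your longer route gets shaky. First, the $\C$-module compatibility worry is a red herring: Shimizu's theorem only needs an isomorphism $M\cong\dS(M)$ \emph{as objects of $\M$}, not an isomorphism $\id_{\M}\cong_{\C}\dS$ of $\C$-module functors. So the hypothesis $\dN\cong\id_{\M}$ as bare functors suffices, and the paper does not bother with the twist bookkeeping you outline. Second, your attempt to rederive Shimizu's result via Theorem~\ref{thm:walton} has a gap: the claim that left ideals of $A=\uHom(M,M)$ correspond to subobjects of $M$ is not correct in general (left $A$-modules in $\C$ correspond to objects of the $\C$-module category generated by $M$, but left ideals---i.e.\ $A$-submodules of $A$---do not correspond simply to subobjects of $M$). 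The nondegeneracy of $\phi^r$ does give you an isomorphism $A\cong\lv A$, but turning this into the ideal criterion of Theorem~\ref{thm:walton} requires showing the induced form is compatible with multiplication, which is precisely the nontrivial content of \cite[Theorem~3.14]{shimizu2019relative}; you have not sketched that step.
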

\begin{proof}
Since $\M$ is unimodular, we have a natural isomorphism $\fu:\id_{\M}\rightarrow \dS\; \dN$. We also have a natural isomorphism $\tau : \dN \rightarrow \id_{\M}$. By combining these two isomorphisms, we get a natural isomorphism $p:\id_{\M}\xrightarrow{\fu} \dS\;\dN \xrightarrow{ \dS\; \tau} \dS$, thereby providing an isomorphism $p_M:M\rightarrow\dS(M)$ for all $M\in \M$. Now, by \cite[Theorem~3.14]{shimizu2019relative}, it follows that $\uHom(M,M)$ is a Frobenius algebra in~$\C$. 
\end{proof}

\begin{remark}
A finite linear category $\M$ is said to be \textit{symmetric Frobenius} if $\M$ is equivalent to the category of modules over a symmetric Frobenius algebra $A$ \cite[Definition~3.23]{fuchs2020eilenberg}. This definition is justified because the property of being a symmetric Frobenius algebra is Morita invariant. By \cite[Proposition~3.24]{fuchs2020eilenberg}, $\M$ is symmetric Frobenius if and only if $\dN_{\M}\cong \id_{\M}$. Thus, symmetric Frobenius module categories provide natural candidates to which the above proposition can be applied.    
\end{remark}

\subsection{Frobenius algebras from unimodular module categories}\label{subsec:frobUni}
In this section, we use unimodular module categories to provide a construction of (separable, special) Frobenius algebras in the Drinfeld center $\Z(\C)$. We accomplish this by constructing appropriate Frobenius monoidal functors with target $\Z(\C)$. To construct such functors, we employ the strategy outlined in \cite{yadav2022frobenius}. Namely, we consider the right adjoint of the functor $\Psi$ defined below.

\begin{definition}\cite{shimizu2020further}\label{defn:Psi}
Let $\C$ be a finite tensor category and $\M$ a left $\C$-module category. Then, we define a functor $\Psi$ as 
\begin{equation*}\label{eq:4Psi}
\Psi:=\Psi_{\M}:\Z(\C) \rightarrow \Rex_{\C}(\M), \hspace{1cm} (X,\sigma) \mapsto  (X\tr -,s^{\sigma}),
\end{equation*} 
where the left $\C$-module structure of the functor  $X\tr -$ is given by
\[s^{\sigma}_{Y,M}: Y\tr (X\tr M) = (Y\otimes X)\tr M \xrightarrow{\sigma_Y\tr \id_M} (X\otimes Y)\tr M = X\tr (Y\tr M).\]
\end{definition}

Next, we recall from \cite[Lemma~4.7]{yadav2022frobenius} that we can write $\Psi = U'\circ \Omega \circ \Theta$ where,
\begin{itemize}
    \item $\Theta: \Z(\C)\rightarrow \Z(\Rex_{\C}(\M)^{\rev})$ is Schauenburg's equivalence of braided categories \cite[Theorem~3.3]{schauenburg2001monoidal},
    \smallskip
    \item $\Omega:\Z(\Rex_{\C}(\M)^{\rev}) \rightarrow \Z(\Rex_{\C}(\M))^{\mir}$ is the equivalence of braided categories provided by \cite[Exercise~8.5.2]{etingof2016tensor}, and
    \smallskip
    \item $U':\Z(\Rex_{\C}(\M))^{\mir} \rightarrow \Rex_{\C} (\M)$ is the functor that forgets the half-braiding. Let $R$ denote the right adjoint of $U'$. 
\end{itemize}

\smallskip

In the following discussion, the algebra object $\Psi^{\ra}(\id_{\M})\in\Z(\C)$ will be very important. By Theorem~\ref{thm:adjAlg}(a), $\Psi^{\ra}(\id_{\M})$ is a commutative algebra in $\Z(\C)$. To start, consider the following result, which follows from the work in \cite{shimizu2020further}.

\begin{theorem}\label{thm:PsiRaCommFrob}
Let $\C$ be a finite tensor category and $\M$ a finite left $\C$-module category. Then, $\M$ is unimodular if and only if $\Psi^{\ra}(\id_\M)$ is a Frobenius algebra in $\Z(\C)$. 
\end{theorem}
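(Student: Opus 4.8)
The plan is to deduce this from the factorization $\Psi = U'\circ\Omega\circ\Theta$ recalled just above, together with Theorem~\ref{thm:adjAlg}(b) applied to the finite tensor category $\Rex_{\C}(\M)$. Taking right adjoints in the factorization gives $\Psi^{\ra}\cong \Theta^{-1}\circ\Omega^{-1}\circ R$, where $R$ is the right adjoint of the forgetful functor $U'\colon \Z(\Rex_{\C}(\M))^{\mir}\to\Rex_{\C}(\M)$; since $\Theta$ and $\Omega$ are (braided) equivalences, they are in particular strong monoidal, so they are Frobenius monoidal and preserve Frobenius algebras in both directions. Hence $\Psi^{\ra}(\id_{\M})$ is a Frobenius algebra in $\Z(\C)$ if and only if $R(\id_{\M})$ is a Frobenius algebra in $\Z(\Rex_{\C}(\M))$ (the $\mir$-ing does not affect the underlying monoidal structure, so it does not affect whether an object is a Frobenius algebra). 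Now $\id_{\M}$ is the unit object of $\Rex_{\C}(\M)$, and $R$ is precisely the right adjoint $R_{\Rex_{\C}(\M)}$ of the forgetful functor out of the Drinfeld center of $\Rex_{\C}(\M)$; so $R(\id_{\M}) = R_{\Rex_{\C}(\M)}(\unit)$.

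Applying Theorem~\ref{thm:adjAlg}(b) to the finite tensor category $\Rex_{\C}(\M)$, the algebra $R_{\Rex_{\C}(\M)}(\unit)$ is Frobenius in $\Z(\Rex_{\C}(\M))$ if and only if $\Rex_{\C}(\M)$ is unimodular, which by Definition~\ref{defn:unimodular} is exactly the statement that $\M$ is a unimodular module category. Chaining the equivalences established in the previous paragraph then yields: $\M$ unimodular $\iff$ $\Rex_{\C}(\M)$ unimodular $\iff$ $R_{\Rex_{\C}(\M)}(\unit)$ Frobenius $\iff$ $R(\id_{\M})$ Frobenius $\iff$ $\Psi^{\ra}(\id_{\M})$ Frobenius, which is the claim.

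There is one point requiring care, which I expect to be the main (if modest) obstacle: I must be sure that the right adjoint appearing in the factorization of $\Psi^{\ra}$ genuinely matches $R_{\Rex_{\C}(\M)}$ up to the braided equivalences $\Theta,\Omega$, i.e.\ that forming $\Psi^{\ra}$ really is ``take the right adjoint of the last arrow, precompose with the inverses of the equivalences,'' and that the resulting identification of algebra objects is monoidal. This is exactly the content of the bullet-point decomposition quoted from \cite[Lemma~4.7]{yadav2022frobenius} (and $R$ is named there as the right adjoint of $U'$), so it amounts to invoking that lemma correctly rather than proving something new; the only subtlety is tracking that the $\mathrm{mir}$ decoration on $\Z(\Rex_{\C}(\M))$ is irrelevant for the Frobenius-algebra property, which is immediate since a Frobenius algebra structure refers only to the tensor product, unit, and associativity/Frobenius constraints, none of which are altered by replacing a braiding with its mirror. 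Everything else is a direct citation of Theorem~\ref{thm:adjAlg}(b) and Definition~\ref{defn:unimodular}.
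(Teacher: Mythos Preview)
Your argument is essentially the paper's proof in the indecomposable case, and that part is fine. The gap is that you apply Theorem~\ref{thm:adjAlg}(b) to ``the finite tensor category $\Rex_{\C}(\M)$,'' but $\Rex_{\C}(\M)$ is a finite \emph{tensor} category (i.e.\ has simple unit) only when $\M$ is indecomposable; in general it is merely a finite multitensor category, and Theorem~\ref{thm:adjAlg}(b) as stated is for tensor categories. The statement of Theorem~\ref{thm:PsiRaCommFrob} does not assume $\M$ indecomposable, so your proof as written does not cover the general case.

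The paper closes this gap by a separate reduction: if $\M=\bigoplus_{i\in I}\M_i$ with each $\M_i$ indecomposable, then $\Psi^{\ra}_{\M}(\id_{\M})\cong\bigoplus_{i}\Psi^{\ra}_{\M_i}(\id_{\M_i})$, and one invokes Proposition~\ref{prop:frobDirectSum} (a direct sum of algebras is Frobenius iff each summand is) together with Lemma~\ref{lem:uniDirectSum} ($\M$ is unimodular iff each $\M_i$ is) to reduce to the indecomposable case already handled. Adding this paragraph would make your proof complete and identical in strategy to the paper's.
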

\begin{proof} 
Let $\Theta^{-1}, \Omega^{-1}$, respectively, denote the quasi-inverse of the equivalences $\Theta, \Omega$. 
Then, 
\begin{equation}\label{eq:PsiRa}
    \Psi^{\ra}\cong  \Theta^{\ra} \circ \Omega^{\ra} \circ U'^{\ra} \cong \Theta^{-1} \circ \Omega^{-1} \circ R
\end{equation}
Suppose that $\M$ is indecomposable. Then, we get the following equivalences.
\begin{align*}
    \Psi^{\ra}(\id_{\M})\in\Frob(\Z(\C)) 
    \stackrel{(\diamondsuit)}{\iff} 
    R(\id_{\M})\in \Frob(\Z(\Rex_{\C}(\M))) 
    & \stackrel{(\spadesuit)}{\iff} 
    \Rex_{\C}(\M)\; \textnormal{unimodular} \\[-0.13cm]
    & \stackrel{(\textnormal{\ref{lem:uniDefn}})}{\iff} 
    \M \;\text{unimodular}.  
\end{align*}
The equivalence $(\diamondsuit)$ holds because $\Theta^{-1}$ and $\Omega^{-1}$ are monoidal equivalences, and thereby they preserve Frobenius algebras. The equivalence $(\spadesuit)$ follows from Theorem~\ref{thm:adjAlg}(b) because $\Rex_{\C}(\M)$ is a finite tensor category.

Now suppose that $\M$ is decomposable and $\M=\oplus_{i\in I}\M_i$ where $\M_i$ are indecomposable $\C$-module categories and the set $I$ is finite. First observe that 
\begin{equation*}
    \Psi^{\ra}_{\M}(\id_{\M}) = \Psi^{\ra}_{\M}(\oplus_{i\in I} \id_{\M_i}) = \oplus_{i\in I}\Psi^{\ra}_{\M_i}(\id_{\M_i}) 
\end{equation*}
Now by the following equivalences, the claim follows. Below, we write $\Frob$ to denote $\Frob(\Z(\C))$.
\begin{align*}
    \Psi^{\ra}_{\M}(\id_{\M})\in\Frob  
    \stackrel{(\textnormal{\ref{prop:frobDirectSum}})}{\iff}  
    \Psi^{\ra}_{\M_i}(\id_{\M_i}) \in \Frob\; \forall i\in I  
    & \iff 
    \M_i \;\text{unimodular}\; \forall  i\in I  \\[-0.16cm]
    &  \stackrel{(\textnormal{\ref{lem:uniDirectSum}})}{\iff} 
    \M \; \text{unimodular} .  \qedhere
\end{align*}

\end{proof}

In the following theorem, we collect many  characterizations of unimodular module categories.
This result also highlights the importance of the functor $\Psi^{\ra}$ to the problem of constructing commutative Frobenius algebras in the Drinfeld center. 

\begin{theorem}\label{thm:equivalentUni}
Let $\C$ be a finite tensor category and $\M$ an exact, left $\C$-module category. Then, the following are equivalent.
\begin{enumerate}
    \item[\upshape{(a)}] $\M$ is a unimodular module category.

    \item[\upshape{(b)}] $\Rex_{\C}(\M)$ is a unimodular multitensor category.

    \item[\upshape{(c)}] $\dS\;\dN\cong \id_{\M}$ as left $\C$-module functors.

    \item[\upshape{(d)}] $\Psi^{\ra}(\id_{\M})$ is a Frobenius algebra in $\Z(\C)$.
\end{enumerate}
If furthermore, $\M$ is indecomposable, then above conditions are equivalent to the following.
\begin{enumerate}
    \item[\upshape{(e)}] $\Psi^{\ra}$ is a Frobenius monoidal functor.
\end{enumerate}
\end{theorem}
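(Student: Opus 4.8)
The plan is to establish the chain of equivalences by first assembling the background facts already stated. Equivalence of (a) and (b) is immediate from Definition~\ref{defn:unimodular} (together with Lemma~\ref{lem:uniDefn}), once we note that $\Rex_{\C}(\M)$ is a finite multitensor category by \cite[Theorem~2.18]{etingof2005fusion}-type results (and a finite tensor category when $\M$ is indecomposable, by \cite[Lemma~7.12.6]{etingof2016tensor}). Equivalence of (b) and (c) is exactly the content of the displayed computation~(\ref{eq:uniModCat}): one has $D_{\Rex_{\C}(\M)} \cong_{\C} \overline{\dN}\,\overline{\dS}$ by \cite[Proposition~4.13]{spherical2022}, and since $\dS\,\dN = (\overline{\dN}\,\overline{\dS})^{-1}$ in $\Rex_{\C}(\M)$ (using Theorems~\ref{thm:Serre}(b) and~\ref{thm:Nak}(b)), unimodularity of $\Rex_{\C}(\M)$ is equivalent to $\dS\,\dN \cong_{\C} \id_{\M}$.

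Next I would deduce the equivalence of (a) and (d). This is essentially Theorem~\ref{thm:PsiRaCommFrob}, which has already been proved in the excerpt for $\M$ a finite (not necessarily exact) left $\C$-module category; since an exact module category is in particular finite, (a)$\iff$(d) follows directly. It is worth recording in the proof that, via the factorization $\Psi \cong U' \circ \Omega \circ \Theta$ and the resulting formula~(\ref{eq:PsiRa}) for $\Psi^{\ra}$, the algebra $\Psi^{\ra}(\id_{\M})$ corresponds under the monoidal equivalences $\Theta^{-1}, \Omega^{-1}$ to $R(\id_{\M})$, the image of the unit under the right adjoint of the forgetful functor $\Z(\Rex_{\C}(\M)) \to \Rex_{\C}(\M)$; then Theorem~\ref{thm:adjAlg}(b) applied to the finite tensor category $\Rex_{\C}(\M)$ (in the indecomposable case, extended via Proposition~\ref{prop:frobDirectSum} and Lemma~\ref{lem:uniDirectSum} in the decomposable case) closes the loop.

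Finally, for the equivalence with (e) under the additional hypothesis that $\M$ is indecomposable: the implication (e)$\Rightarrow$(d) is trivial, since a Frobenius monoidal functor sends the unit Frobenius algebra $\unit_{\Z(\C)}$ — or more to the point, sends $\id_{\M}$ viewed appropriately — to a Frobenius algebra; more carefully, $\Psi^{\ra}(\id_{\M})$ is a Frobenius algebra whenever $\Psi^{\ra}$ is a Frobenius monoidal functor because Frobenius monoidal functors preserve Frobenius algebras and $\id_{\M}$ is the trivial Frobenius algebra in $\Rex_{\C}(\M)$. The substantive direction is (d)$\Rightarrow$(e): I would use the factorization~(\ref{eq:PsiRa}), namely $\Psi^{\ra} \cong \Theta^{-1} \circ \Omega^{-1} \circ R$. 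Since $\Theta^{-1}$ and $\Omega^{-1}$ are monoidal equivalences (hence trivially Frobenius monoidal), by Lemma~\ref{lem:pivShimizuB}-style composability of Frobenius monoidal functors it suffices to show that $R$, the right adjoint of the forgetful functor $U': \Z(\Rex_{\C}(\M)) \to \Rex_{\C}(\M)$, is Frobenius monoidal. Here I would invoke the general principle — available once $\Rex_{\C}(\M)$ is known to be unimodular by the already-established (d)$\Rightarrow$(b) — that for a unimodular finite tensor category $\D$ the right adjoint $R_{\D}: \D \to \Z(\D)$ of the forgetful functor is a Frobenius monoidal functor; this is the functorial enhancement of Theorem~\ref{thm:adjAlg}(b) and is the key input. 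The main obstacle is precisely this last point: upgrading the statement ``$R(\unit)$ is a Frobenius \emph{algebra}'' to ``$R$ is a Frobenius \emph{monoidal functor}'' requires producing the comonoidal structure on $R$ compatible with the Frobenius algebra structure on $R(\unit)$ and verifying the two Frobenius compatibility axioms; I expect this to rest on the explicit description of $R$ as $R(X) = \int_{Y \in \D} Y \otimes X \otimes {}^{*}Y$ (a coend), the fact that in the unimodular case the Radford isomorphism trivializes so that this coend simultaneously computes the relevant end, and a diagram chase identifying the induced multiplication and comultiplication, which I would carry out in the full proof rather than here.
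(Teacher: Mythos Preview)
Your handling of the equivalences (a)$\Leftrightarrow$(b)$\Leftrightarrow$(c)$\Leftrightarrow$(d) matches the paper exactly: these are disposed of via Definition~\ref{defn:unimodular}, Lemma~\ref{lem:uniDefn}, equation~(\ref{eq:uniModCat}), and Theorem~\ref{thm:PsiRaCommFrob}, just as you say.

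The difference is in (d)$\Leftrightarrow$(e). The paper does not unwind the factorization $\Psi^{\ra}\cong \Theta^{-1}\circ\Omega^{-1}\circ R$ here at all; it simply invokes \cite[Theorem~1.2(i)]{yadav2022frobenius} applied directly to the tensor functor $\Psi$. That result says, in this setting, that the right adjoint of such a functor is Frobenius monoidal if and only if the image of the unit is a Frobenius algebra, so (d)$\Leftrightarrow$(e) is a one-line citation. Your route---reduce via the factorization to showing that $R:\Rex_{\C}(\M)\to\Z(\Rex_{\C}(\M))$ is Frobenius monoidal once $\Rex_{\C}(\M)$ is unimodular, and then establish that from the end description and the trivialization of the Radford isomorphism---is a correct strategy, but it amounts to reproving the special case of the cited theorem rather than using it. What your approach buys is self-containment; what the paper's buys is that the genuinely nontrivial step (upgrading ``$R(\unit)$ is Frobenius'' to ``$R$ is Frobenius monoidal'', which you rightly flag as the main obstacle) is outsourced to a result already in the literature and applicable to $\Psi$ itself without any factoring.
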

\begin{proof}
(a)$\Leftrightarrow$(c) and (a) $\Leftrightarrow$(b) are clear from Definition~\ref{defn:uniModCat} and Lemma~\ref{lem:uniDefn}, respectively. Also, (a)$\Leftrightarrow$(d) is known by Theorem~\ref{thm:PsiRaCommFrob}. When $\M$ is indecomposable, (d)$\Leftrightarrow$(e) follows from \cite[Theorem~1.2(i)]{yadav2022frobenius}. 
\end{proof}

Consequently, by applying \cite[Theorem~1.2(i)]{yadav2022frobenius} to the functor $\Psi$, we get the following result.

\begin{corollary}\label{cor:PsiRaSS}
Let $\C$ be a finite tensor category and $\M$ be an indecomposable, unimodular left $\C$-module category. Then, $\Psi^{\ra}$ is a separable (resp. special) Frobenius monoidal functor if and only if the Frobenius algebra $\Psi^{\ra}(\id_{\M})$ in $\Z(\C)$ is separable (resp. special).  \qed
\end{corollary}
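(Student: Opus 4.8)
The plan is to deduce this corollary directly from the cited result \cite[Theorem~1.2(i)]{yadav2022frobenius}, which — as already invoked in the proof of Theorem~\ref{thm:equivalentUni} — characterizes when the right adjoint of a functor of the form $\Psi$ is a (separable, special) Frobenius monoidal functor in terms of properties of the object $\Psi^{\ra}(\unit)=\Psi^{\ra}(\id_{\M})$. Since $\M$ is assumed indecomposable and unimodular, Theorem~\ref{thm:equivalentUni}(e) already tells us that $\Psi^{\ra}$ is a Frobenius monoidal functor, so the content to be extracted is only the refinement to the separable and special cases.

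First I would recall the decomposition $\Psi^{\ra}\cong \Theta^{-1}\circ\Omega^{-1}\circ R$ from \eqref{eq:PsiRa}, where $\Theta^{-1}$ and $\Omega^{-1}$ are monoidal equivalences and $R$ is the right adjoint of the forgetful functor $U':\Z(\Rex_{\C}(\M))^{\mir}\to\Rex_{\C}(\M)$. Because monoidal equivalences preserve all the structure in sight — the Frobenius monoidal structure maps $F_2,F_0,F^2,F^0$ and the scalars $\beta_2,\beta_0$ witnessing separability/speciality — the functor $\Psi^{\ra}$ is separable (resp.\ special) Frobenius monoidal if and only if $R$ is. Likewise, the Frobenius algebra $\Psi^{\ra}(\id_{\M})$ in $\Z(\C)$ is separable (resp.\ special) if and only if its image $R(\id_{\M})$ under the (inverse) monoidal equivalence is separable (resp.\ special) as a Frobenius algebra in $\Z(\Rex_{\C}(\M))$, since these notions are defined by the vanishing/invertibility of scalars $m\circ\Delta=\beta_A\id_A$ and $\nu\circ u=\beta_{\unit}\id_{\unit}$, which are transported along monoidal equivalences. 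Thus the claim for $\Psi^{\ra}$ reduces to the corresponding statement for the pair $(R, R(\id_{\M}))$.

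Next I would invoke \cite[Theorem~1.2(i)]{yadav2022frobenius} in exactly the form already used in this paper: for the forgetful–right-adjoint pair, $R$ is a separable (resp.\ special) Frobenius monoidal functor if and only if $R(\unit)=R(\id_{\M})$ is a separable (resp.\ special) Frobenius algebra. (The hypothesis that $\Rex_{\C}(\M)$ is a finite tensor category — valid since $\M$ is indecomposable and exact — and that it is unimodular — valid by Theorem~\ref{thm:equivalentUni}(b) — is what makes $R(\id_{\M})$ Frobenius in the first place, so the refined statement is the natural next layer of that theorem.) Chaining this with the transport along $\Theta^{-1}\circ\Omega^{-1}$ from the previous paragraph gives precisely: $\Psi^{\ra}$ is separable (resp.\ special) Frobenius monoidal $\iff$ $\Psi^{\ra}(\id_{\M})$ is a separable (resp.\ special) Frobenius algebra in $\Z(\C)$.

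The only mild subtlety — and the step I would be most careful about — is checking that the notions of \emph{separable} and \emph{special} Frobenius monoidal functor, and \emph{separable} and \emph{special} Frobenius algebra, really are preserved in both directions under the monoidal equivalences $\Theta^{-1}$ and $\Omega^{-1}$; but this is routine, since a monoidal equivalence $(E,E_2,E_0)$ with $E_2,E_0$ isomorphisms sends a Frobenius monoidal functor $F$ to $E\circ F$ with structure maps conjugated by the $E_2,E_0$, leaving the defining scalar identities intact (up to the fixed invertible scalars coming from $E_0$, which do not affect invertibility or non-vanishing), and similarly on algebra objects. No genuine obstacle arises; the corollary is essentially a packaging of \cite[Theorem~1.2(i)]{yadav2022frobenius} together with the already-established equivalences, which is why it can be stated with a \qed in the excerpt.
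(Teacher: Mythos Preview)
Your proof is correct, but it takes a longer route than the paper. The paper simply records that the corollary follows by applying \cite[Theorem~1.2(i)]{yadav2022frobenius} \emph{directly} to the functor $\Psi$ --- the same theorem already invoked for the equivalence (d)$\Leftrightarrow$(e) in Theorem~\ref{thm:equivalentUni}, whose full statement also covers the separable and special refinements. You instead pass through the decomposition $\Psi^{\ra}\cong\Theta^{-1}\circ\Omega^{-1}\circ R$ from \eqref{eq:PsiRa}, reduce to the analogous statement for $R$, and then transport the conclusion back along the monoidal equivalences $\Theta^{-1},\Omega^{-1}$. This is sound, and your care in verifying that separability and speciality of both Frobenius monoidal functors and Frobenius algebras are preserved under monoidal equivalences is well placed. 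But the detour is unnecessary here: the cited theorem from \cite{yadav2022frobenius} is stated in enough generality to apply to $\Psi$ itself (it is a statement about right adjoints of suitable tensor functors, not specifically about forgetful functors from Drinfeld centers), so no reduction step is needed. Your approach would be the natural one if the cited result were only available for $U'$; once one knows its scope, the paper's one-line application is cleaner.
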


For future use, we also record the following result.

\begin{lemma}\label{lem:connected}
Let $\M$ be an indecomposable left $\C$-module category. Then, $\Psi^{\ra}(\id_{\M})$ is connected. 
\end{lemma}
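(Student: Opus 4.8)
The plan is to show that $\Hom_{\Z(\C)}(\unit_{\Z(\C)}, \Psi^{\ra}(\id_{\M})) \cong \kk$ by passing through the adjunction and reducing the question to an endomorphism space in $\Rex_{\C}(\M)$. First I would use the adjunction to rewrite
\[
\Hom_{\Z(\C)}\!\big(\unit_{\Z(\C)},\, \Psi^{\ra}(\id_{\M})\big) \;\cong\; \Hom_{\Rex_{\C}(\M)}\!\big(\Psi(\unit_{\Z(\C)}),\, \id_{\M}\big).
\]
Next I would identify $\Psi(\unit_{\Z(\C)})$: by Definition~\ref{defn:Psi}, the unit object of $\Z(\C)$ is $(\unit_{\C},\,\text{triv})$, and $\Psi$ sends it to the functor $\unit_{\C}\tr - = \id_{\M}$ (with its canonical module structure, which is the identity). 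Hence the space above is just $\End_{\Rex_{\C}(\M)}(\id_{\M})$.

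The final step is to observe that since $\M$ is indecomposable, $\Rex_{\C}(\M)$ is a finite tensor category (not merely a multitensor category), so its unit object $\id_{\M}$ is simple; therefore $\End_{\Rex_{\C}(\M)}(\id_{\M}) \cong \kk$ by Schur's lemma (using that $\kk$ is algebraically closed). Combining the two isomorphisms gives $\Hom_{\Z(\C)}(\unit, \Psi^{\ra}(\id_{\M})) \cong \kk$, which is exactly the statement that $\Psi^{\ra}(\id_{\M})$ is connected.

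I do not anticipate a serious obstacle here; the only point requiring a little care is making sure the adjunction isomorphism is being applied correctly and that $\Psi(\unit_{\Z(\C)})$ is genuinely $\id_{\M}$ as a $\C$-module functor (including matching module constraints), rather than merely isomorphic to it as a plain functor. This is immediate from the formula for $s^{\sigma}$ in Definition~\ref{defn:Psi} when $\sigma$ is the trivial half-braiding. One could alternatively phrase the whole argument without adjunctions by noting $\Psi$ is (op)monoidal so $\Psi^{\ra}$ is monoidal, hence $\Psi^{\ra}(\id_{\M})$ is an algebra with unit $\unit_{\Z(\C)} \to \Psi^{\ra}(\id_{\M})$, and then computing $\Hom(\unit,\Psi^{\ra}(\id_{\M}))$ via the adjunction as above; but the direct computation is cleaner.
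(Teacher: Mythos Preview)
Your proposal is correct and follows exactly the same route as the paper: apply the $\Psi\dashv\Psi^{\ra}$ adjunction, identify $\Psi(\unit_{\Z(\C)})$ with $\id_{\M}$, and use indecomposability of $\M$ to conclude $\End_{\Rex_{\C}(\M)}(\id_{\M})\cong\kk$. The paper compresses this into a single displayed line, but the content is identical.
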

\begin{proof}
The proof follows from the following computation. 
\begin{equation*}
    \Hom_{\Z(\C)}(\unit_{\Z(\C)},\Psi^{\ra}(\id_{\M}))\cong \Hom_{\Rex_{\C}(\M)}(\Psi(\unit_{\Z(\C)}),\id_{\M}) = \Hom_{\Rex_{\C}(\M)}(\id_{\M},\id_{\M}) \cong \kk. \qedhere
\end{equation*} 
\end{proof}


\subsection{Pivotal case}\label{subsec:pivUni}
Next, we consider the pivotal case when $\C$ is a pivotal finite tensor category. This assumption is needed in order to construct symmetric Frobenius algebras in $\Z(\C)$.

Let $\C$ be a pivotal tensor category with pivotal structure $\mathfrak{p}:\id_{\C}\xrightarrow{\cong} \lv\lv(-)$. Recall that, by Theorem~\ref{thm:Serre}(a), if $\M$ is an exact left $\C$-module category, then the (right) relative Serre functor $\dS$ of $\M$ exists. Then, $\dS$ is a left $\C$-module functor with module constraint given by 
\[\dS(X\tr M) \xrightarrow{\fs_{X,M}} \lv \lv X\tr \dS(M) \xrightarrow{\mathfrak{p}_X^{-1} \tr \id_{M} }  X\tr \dS(M) .\]  
This allows one to define a pivotal structure on an exact $\C$-module category.

\begin{definition}\cite[Definition~3.11]{shimizu2019relative}\label{defn:pivotalModule}
Let $\C$ be a pivotal tensor category. A \textit{pivotal structure} on an exact left $\C$-module category $\M$ is a left $\C$-module natural isomorphism $\widetilde{\fp} :\id_{\M}\rightarrow \dS$. A \textit{pivotal left $\C$-module category} is an exact left $\C$-module category equipped with a pivotal structure.
\end{definition}

Shimizu proved the following interesting property of pivotal module categories.

\begin{theorem}\cite[Theorem~3.13]{shimizu2019relative}\label{thm:ShiRexPivotal}
If $\C$ is a pivotal finite tensor category and $\M$ is a pivotal left $\C$-module category, then $(\Rex_{\C}(\M))^{\rev}$ is a pivotal finite multitensor category.    \qed
\end{theorem}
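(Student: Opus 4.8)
Since $\Rex_{\C}(\M)$ is a finite multitensor category and reversing the tensor product preserves rigidity, finiteness and $\kk$-bilinear exactness, $(\Rex_{\C}(\M))^{\rev}$ is again a finite multitensor category. It remains to equip it with a pivotal structure, i.e.\ a monoidal natural isomorphism from the identity to its double left dual functor. Take the tensor product on $\Rex_{\C}(\M)$ to be composition of functors. Because $\M$ is exact, every object of $\Rex_{\C}(\M)$ is an exact $\C$-module functor, hence admits a left and a right adjoint; since $\C$ is rigid these adjoints are again $\C$-module functors, they are exact, hence lie in $\Rex_{\C}(\M)$, and they realize the left and right duals $\leftdual F = F^{\la}$, $F\rv = F^{\ra}$. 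Consequently the double left dual functor of $(\Rex_{\C}(\M))^{\rev}$ is the double right adjoint functor $F\mapsto F^{\rra}$ of $\Rex_{\C}(\M)$, and it suffices to construct a monoidal natural isomorphism $\mathfrak{q}\colon \id \xrightarrow{\ \sim\ } (-)^{\rra}$.

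The central step is to identify $F^{\rra}$ with conjugation by the relative Serre functor $\dS := \dS_{\M}$. In the pivotal situation the constraint $\fs$ of Theorem~\ref{thm:Serre}(c), composed with the twist by $\fp^{-1}$, exhibits $\dS$ as a genuine $\C$-module equivalence $\M \xrightarrow{\sim} \M$ with quasi-inverse $\overline{\dS}$, so $F \mapsto \dS\circ F\circ \dS^{-1}$ is a strong monoidal endofunctor of $\Rex_{\C}(\M)$. I claim there is a monoidal natural isomorphism $F^{\rra} \cong \dS\circ F\circ \dS^{-1}$. At the level of underlying functors this is the familiar fact that the Nakayama functor of a finite abelian category computes double right adjoints; the $\C$-module-functorial refinement is governed by the identification $\dS \cong_{\C} D\tr \dN$ of Theorem~\ref{thm:NakSerre}, the module structure $\fn$ of $\dN$ from Theorem~\ref{thm:Nak}(a), and the Radford isomorphism $\fR$ of (\ref{eq:Radford}) — the twist by $D$ together with $\fR$ being precisely what converts the naive conjugation isomorphism into one of $\C$-module functors, and what forces the reversal of the tensor product. (As a consistency check, for $\M=\C$ one has $\dS^{\C}_{\C}\cong\lv\lv(-)$ and the claim specializes to $(-\otimes c)^{\rra}\cong -\otimes\lv\lv c$ in $\Rex_{\C}(\C)$.)

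Granting the claim, pivotality of $\M$ finishes the argument. Its pivotal structure $\tfp\colon \id_{\M}\xrightarrow{\sim}\dS$ is a $\C$-module natural isomorphism (Definition~\ref{defn:pivotalModule}), so whiskering it produces, for each $F\in\Rex_{\C}(\M)$, a $\C$-module natural isomorphism $F = \id_{\M}\circ F\circ \id_{\M}\xrightarrow{\ \sim\ }\dS\circ F\circ \dS^{-1}$, natural in $F$; composing with the isomorphism $\dS\circ F\circ \dS^{-1}\cong F^{\rra}$ of the previous paragraph gives $\mathfrak{q}_F\colon F\xrightarrow{\sim} F^{\rra}$. Since $\tfp$ is a module natural transformation, whiskering is compatible with composition of functors; since the isomorphism $\dS(-)\dS^{-1}\cong(-)^{\rra}$ is monoidal and the double left dual functor carries its canonical monoidal structure, one checks that $\mathfrak{q}$ is a monoidal natural transformation on $(\Rex_{\C}(\M))^{\rev}$. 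Hence $\mathfrak{q}$ is the desired pivotal structure.

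The main obstacle is the middle paragraph: establishing the precise $\C$-module-functorial and monoidal isomorphism $F^{\rra}\cong \dS\circ F\circ \dS^{-1}$. This reduces to a careful (co)end computation with internal Homs and the relative Serre functor, drawing on Theorems~\ref{thm:Serre}, \ref{thm:Nak} and \ref{thm:NakSerre}, and it is exactly here that the Radford isomorphism and the reversal $(-)^{\rev}$ of the tensor product on $\Rex_{\C}(\M)$ become unavoidable; the remaining steps are routine manipulations with adjunctions and whiskering.
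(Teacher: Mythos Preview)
The paper does not prove this result; it is quoted from \cite{shimizu2019relative} with a \qed. What the paper does supply, immediately after the statement, is the explicit pivotal structure on $\Rex_{\C}(\M)$:
\[
\fp^{\Rex_{\C}(\M)}_F \;=\; \bigl(F \xrightarrow{F\circ\tfp} F\circ\dS \xrightarrow{\omega_{F^{\lla}}^{-1}} \dS\circ F^{\lla} \xrightarrow{\tfp^{-1}\circ F^{\lla}} F^{\lla}\bigr),
\]
where $\omega_G:\dS\circ G\to G^{\rra}\circ\dS$ is taken from \cite[Theorem~3.10]{shimizu2019relative}. Your outline is the same strategy in disguise: whisker with $\tfp$ and use an identification of the double adjoint with Serre conjugation. (Your map $F\to F^{\rra}$ on the $\rev$ side is the mirror of the paper's $F\to F^{\lla}$; either determines the other.)

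The substantive divergence is in how you propose to obtain $\omega_F$. You route through $\dS\cong_{\C} D\tr\dN$ (Theorem~\ref{thm:NakSerre}), the Nakayama double-adjoint property, and the Radford isomorphism. Shimizu's argument is more direct and avoids $\dN$, $D$, and $\fR$ altogether: it derives $\omega_F$ straight from the defining isomorphism $\lv\uHom(M,M')\cong\uHom(M',\dS(M))$ of the relative Serre functor by unpacking how internal Homs interact with a $\C$-module functor and its adjoints. Your route is not wrong, but it is a detour, and you yourself flag that the monoidality and $\C$-module compatibility of your conjugation isomorphism remain to be checked; with Shimizu's $\omega_F$ those verifications are already packaged in \cite[Theorem~3.10]{shimizu2019relative}. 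In short: right skeleton, but the ``main obstacle'' you name is precisely the content being cited, and the cited proof handles it more cleanly than the Nakayama/Radford approach you sketch.
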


We use the notation $F^{\lla}:=(F^{\la})^{\la}$ and $F^{\rra}=(F^{\ra})^{\ra}$. Under the assumptions of Theorem~\ref{thm:ShiRexPivotal}, $\Rex_{\C}(\M)$ is also a pivotal monoidal category. In particular, its pivotal structure is given by:
\[ \fp^{\Rex_{\C}(\M)}_F:=(F \xrightarrow{F\circ \widetilde{\fp}}  F \circ \dS  \xrightarrow{\omega_{F^{\lla}}^{-1}} {\dS \circ F^{\lla}} \xrightarrow{\widetilde{\fp}^{-1} \circ F^{\lla}}  F^{\lla}), \] 
where $\omega_F:\dS_{\N}\circ F\rightarrow F^{\rra}\circ \dS_{\M}$ for $F\in \Rex_{\C}(\M)$ is the natural isomorphism of functors from \cite[Theorem~3.10]{shimizu2019relative}.
Next, we prove the two main result of this section.

\begin{theorem}\label{thm:PsiPivotal}
Let $\C$ be a pivotal finite tensor category and $\M$ be a indecomposable, pivotal, unimodular left $\C$-module category. Then $\Psi^{\ra}$ is a pivotal Frobenius monoidal functor.
\end{theorem}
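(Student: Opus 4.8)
The strategy is to realize $\Psi^{\ra}$ as a composition of pivotal Frobenius monoidal functors and then invoke Lemma~\ref{lem:pivShimizuB}. Recall from \eqref{eq:PsiRa} that $\Psi^{\ra}\cong \Theta^{-1}\circ\Omega^{-1}\circ R$, where $\Theta$ and $\Omega$ are braided monoidal equivalences and $R$ is the right adjoint of the forgetful functor $U':\Z(\Rex_{\C}(\M))^{\mir}\to\Rex_{\C}(\M)$. Since $\M$ is indecomposable, unimodular, and pivotal, Theorems~\ref{thm:ShiRexPivotal} and~\ref{thm:PsiRaCommFrob} apply: $\Rex_{\C}(\M)$ is a pivotal, unimodular finite tensor category, and $R(\id_{\M})=U'^{\ra}(\id_{\M})$ is a Frobenius algebra there, so $R$ is a Frobenius monoidal functor (it is the right adjoint of a strong monoidal functor from a unimodular tensor category, i.e.\ $R$ here plays the role of $R_{\mathcal{D}}$ for $\mathcal{D}=\Rex_{\C}(\M)$). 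First I would establish that each of the three functors in the composition is a \emph{pivotal} Frobenius monoidal functor.

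\textbf{Step 1: the equivalences $\Theta^{-1}$ and $\Omega^{-1}$ are pivotal.} These are monoidal equivalences, hence Frobenius monoidal in a trivial way ($F^2=F_2^{-1}$, $F^0=F_0^{-1}$). What needs checking is that they intertwine the chosen pivotal structures. The pivotal structure on $\Z(\C)$ is the canonical one making $U_{\C}$ pivotal; the pivotal structure on $\Z(\Rex_{\C}(\M)^{\rev})$ and on $\Z(\Rex_{\C}(\M))^{\mir}$ is likewise induced from the (Shimizu) pivotal structure on $\Rex_{\C}(\M)^{\rev}$ via the respective forgetful functors. Since Schauenburg's equivalence $\Theta$ and the mirror equivalence $\Omega$ are compatible with the forgetful functors to the underlying monoidal categories, and pivotal structures on Drinfeld centers are uniquely determined by compatibility with the forgetful functor together with the pivotal structure downstairs, $\Theta^{-1}$ and $\Omega^{-1}$ are pivotal functors. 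I would spell this out by transporting the defining identity $\lv(\zeta^F)\circ\fp^{\D}_{F(-)}=\zeta^F_{\lv(-)}\circ F(\fp^{\C}_{-})$ across the commuting squares of forgetful functors; for a strong monoidal equivalence between categories whose pivotal structures are "the same" this identity is automatic.

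\textbf{Step 2: the functor $R$ is a pivotal Frobenius monoidal functor.} Here I would appeal to Theorem~\ref{thm:adjAlg}(c): for a pivotal, unimodular finite tensor category $\D$, the right adjoint $R_{\D}$ of the forgetful functor $U_{\D}:\Z(\D)\to\D$ sends $\unit$ to a \emph{symmetric} Frobenius algebra, and more to the point the corresponding Frobenius monoidal functor $R_{\D}$ is pivotal (this is implicit in \cite{schweigert2022homotopy}; a symmetric Frobenius structure on $R_{\D}(\unit)$ is exactly the statement that $R_{\D}$ respects pivotal structures, by the dictionary between symmetric Frobenius algebras and pivotal Frobenius monoidal functors into a pivotal category). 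Taking $\D=\Rex_{\C}(\M)$ — which is pivotal by Theorem~\ref{thm:ShiRexPivotal} and unimodular by hypothesis — and keeping track of the mirror $(-)^{\mir}$, which does not affect pivotality, gives that $R:\Z(\Rex_{\C}(\M))^{\mir}\to\Rex_{\C}(\M)$ is a pivotal Frobenius monoidal functor.

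\textbf{Step 3: conclude.} With Steps 1 and 2 in hand, $\Psi^{\ra}\cong\Theta^{-1}\circ\Omega^{-1}\circ R$ is a composition of pivotal Frobenius monoidal functors between pivotal categories, so by Lemma~\ref{lem:pivShimizuB} the composition $\Psi^{\ra}$ is itself a pivotal Frobenius monoidal functor. (That $\Psi^{\ra}$ is a Frobenius monoidal functor at all is already Theorem~\ref{thm:equivalentUni}(e); the new content is the pivotality.) \textbf{The main obstacle} I anticipate is Step 2: carefully identifying the pivotal structure on $\Rex_{\C}(\M)$ (rather than on its reverse, where Shimizu states Theorem~\ref{thm:ShiRexPivotal}) and verifying that the right adjoint $R$ of the forgetful functor intertwines it with the canonical pivotal structure on the center — equivalently, unwinding that the symmetric Frobenius algebra $R(\unit)$ of Theorem~\ref{thm:adjAlg}(c) encodes precisely the pivotal-functor compatibility $\lv(\zeta^R)\circ\fp^{\mathrm{mir}}=\zeta^R_{\lv(-)}\circ R(\fp^{\Z})$. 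This requires matching the natural isomorphism $\omega_F$ from \cite[Theorem~3.10]{shimizu2019relative} (which defines $\fp^{\Rex_{\C}(\M)}$) against the half-braiding data used to build $R$, a bookkeeping-heavy but conceptually routine computation.
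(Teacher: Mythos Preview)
Your approach is correct and uses the same ingredients as the paper---that $(U')^{\ra}(\id_{\M})$ is symmetric Frobenius (Theorem~\ref{thm:adjAlg}(c)), that $\Theta$ and $\Omega$ are pivotal equivalences, and the dictionary between symmetric Frobenius algebras and pivotal Frobenius monoidal functors---but the paper organizes them differently and thereby sidesteps the obstacle you single out in Step~2. Rather than proving $R$ is a pivotal Frobenius monoidal functor and then composing via Lemma~\ref{lem:pivShimizuB}, the paper applies the criterion from \cite[Theorem~1.2(ii)]{yadav2022frobenius} directly to $\Psi^{\ra}$: it is enough to show that the algebra $\Psi^{\ra}(\id_{\M})$ is symmetric Frobenius in $\Z(\C)$. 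Since $\Psi^{\ra}(\id_{\M})\cong\Theta^{-1}\Omega^{-1}R(\id_{\M})$ and $\Theta^{-1}\Omega^{-1}$ is a pivotal \emph{strong} monoidal functor (Lemma~\ref{lem:pivShimizuB} applied to equivalences, with pivotality of $\Theta$ supplied by \cite[Theorem~5.16]{spherical2022}), it transports the symmetric Frobenius algebra $R(\id_{\M})$ to a symmetric Frobenius algebra, and one is done. In other words, the paper uses $\Theta^{-1},\Omega^{-1}$ only to move an \emph{algebra}, not to compose \emph{functors}; this avoids ever having to verify that $R$ itself is pivotal. Your route works too, but the bookkeeping you anticipate in Step~2 is precisely what the paper's reorganization eliminates. (Minor slip: in your Step~2 you wrote $R:\Z(\Rex_{\C}(\M))^{\mir}\to\Rex_{\C}(\M)$; the arrow should go the other way.)
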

\begin{proof}
By \cite[Theorem~1.2(ii)]{yadav2022frobenius}, $\Psi^{\ra}$ is a pivotal functor if $\Psi^{\ra}(\id_{\M})$ is a symmetric Frobenius algebra in $\Z(\C)$. 
First, recall that $\Psi= U'\circ \Omega \circ \Theta$. Thus, by (\ref{eq:PsiRa}),
\[ \Psi^{\ra}(\id_{\M}) \cong \Theta^{-1}\circ \Omega^{-1}\circ (U')^{\ra}(\id_{\M}). \]
As $\Rex_{\C}(\M)$ is pivotal finite tensor category, by Theorem~\ref{thm:adjAlg}(c), $(U')^{\ra}(\id_{\M})$ is a symmetric Frobenius algebra.  
It is straightforward that $\Omega$ is a pivotal equivalence and by \cite[Theorem~5.16]{spherical2022}, $\Theta$ is a pivotal equivalence as well. Hence, $\Omega^{-1}, \;\Theta^{-1}$ are also pivotal functors, and, by Lemma~\ref{lem:pivShimizuB}, so is $\Omega^{-1}\circ \Theta^{-1}$. As pivotal functors preserve symmetric Frobenius algebras, we conclude that  $\Psi^{\ra}(\id_{\M})$ is a symmetric Frobenius algebra in $\Z(\C)$. So, we are done by \cite[Theorem~1.2(ii)]{yadav2022frobenius}.
\end{proof}

\begin{theorem}\label{thm:PsiSpecial}
Let $\C$ be a pivotal finite tensor category and $\M$ be an indecomposable, pivotal, unimodular left $\C$-module category. Then, $\Psi^{\ra}$ is a special, pivotal Frobenius monoidal functor if and only if $\dim(\Psi^{\ra}(\id_{\M}))\neq 0$.
\end{theorem}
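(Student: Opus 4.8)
The plan is to reduce the ``special'' condition on the Frobenius monoidal functor $\Psi^{\ra}$ to a nonvanishing-dimension condition on the algebra $\Psi^{\ra}(\id_{\M})$, and then to recognize the latter as the stated dimension condition. First I would invoke Corollary~\ref{cor:PsiRaSS}: since $\M$ is indecomposable and unimodular, $\Psi^{\ra}$ is a special Frobenius monoidal functor if and only if the Frobenius algebra $A:=\Psi^{\ra}(\id_{\M})$ in $\Z(\C)$ is special. By Theorem~\ref{thm:PsiPivotal}, $\Psi^{\ra}$ is already a pivotal Frobenius monoidal functor under our hypotheses, so the word ``pivotal'' in the statement carries no extra content beyond what is automatic; the whole content is the equivalence ``$\Psi^{\ra}$ special $\iff$ $\dim(A)\neq 0$''. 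Hence it suffices to prove: \emph{$A$ is a special Frobenius algebra in $\Z(\C)$ if and only if $\dim(A)\neq 0$}.

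For the forward direction, suppose $A$ is special, with $m\circ\Delta=\beta_2\,\id_A$ and $\nu\circ u=\beta_{\unit}\,\id_{\unit}$ for $\beta_2,\beta_{\unit}\in\kkx$. Since Frobenius algebras are self-dual, $\dim(A)=\varepsilon\, m\,\Delta\, u$ (here I am reading the counit as $\nu$ and using the standard identification of the dimension of a self-dual Frobenius algebra, exactly as in the proof of Lemma~\ref{lem:speFrob}). Then $\dim(A)=\beta_2\,(\nu\circ u)=\beta_2\beta_{\unit}\,\id_{\unit}\neq 0$, using that $\End(\unit_{\Z(\C)})\cong\kk$ (which holds because $\Z(\C)$ is a finite tensor category, its unit being simple). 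For the converse, suppose $\dim(A)\neq 0$. By Lemma~\ref{lem:connected}, $A=\Psi^{\ra}(\id_{\M})$ is connected since $\M$ is indecomposable. Now apply Lemma~\ref{lem:speFrob} to the connected Frobenius algebra $A$ in the $\kk$-linear pivotal category $\Z(\C)$: since $\dim(A)\neq 0$, $A$ is a special Frobenius algebra. Then Corollary~\ref{cor:PsiRaSS} gives that $\Psi^{\ra}$ is a special Frobenius monoidal functor, and combined with Theorem~\ref{thm:PsiPivotal} it is a special, pivotal Frobenius monoidal functor.

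I do not expect a genuine obstacle here; the argument is essentially an assembly of Lemma~\ref{lem:speFrob}, Lemma~\ref{lem:connected}, Corollary~\ref{cor:PsiRaSS}, and Theorem~\ref{thm:PsiPivotal}. The one point that requires a moment's care is the forward implication: one must check that ``special Frobenius monoidal functor'' does force $\dim(\Psi^{\ra}(\id_{\M}))\neq 0$, which is why I route it through the algebra $A$ rather than directly through the functor constants $\beta_0,\beta_2$ of $\Psi^{\ra}$. An alternative, perhaps cleaner, route for the forward direction is to use Lemma~\ref{lem:dimPiv}: applying it to the special pivotal Frobenius monoidal functor $\Psi^{\ra}:\Z(\C)\to\Rex_{\C}(\M)$ — valid since $\End(\unit_{\Z(\C)})\cong\kk$ — gives $\dim(\Psi^{\ra}(\id_{\M}))=\beta_2\beta_0\,\dim_{\Rex_{\C}(\M)}(\id_{\M})\,\id_{\unit}$, and one separately checks $\dim(\id_{\M})\neq 0$ (it equals the global dimension of $\M$, nonzero because the base field is algebraically closed of characteristic zero is \emph{not} assumed — so instead I would argue $\dim(\id_{\M})\neq0$ via the pivotal structure identifying $\id_{\M}\cong\dS_{\M}$ and a separability-type computation, or simply avoid this route). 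To keep the proof self-contained and avoid any characteristic subtleties, I would present the first route, closing with a one-line remark that $A$ is automatically commutative by Theorem~\ref{thm:adjAlg}(a) and symmetric by the proof of Theorem~\ref{thm:PsiPivotal}, so in fact $\Psi^{\ra}(\id_{\M})$ is a commutative, special, symmetric Frobenius algebra in $\Z(\C)$ whenever $\dim(\Psi^{\ra}(\id_{\M}))\neq0$.
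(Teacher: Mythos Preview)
Your proof is correct and follows essentially the same approach as the paper: the converse direction is identical (Lemma~\ref{lem:connected} plus Lemma~\ref{lem:speFrob} plus Corollary~\ref{cor:PsiRaSS}), and for the forward direction the paper uses precisely your ``alternative route'' via Lemma~\ref{lem:dimPiv}. Your worry there is misplaced: $\id_{\M}$ is the \emph{unit object} of $\Rex_{\C}(\M)$, and the quantum dimension of the unit in any pivotal category is $\id_{\unit}=1$, so $\dim_{\Rex_{\C}(\M)}(\id_{\M})=1$ automatically and no characteristic or global-dimension issue arises; the paper simply writes $\beta_0\beta_2\,\dim(\id_{\M})=\beta_0\beta_2$.
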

\begin{proof}
By Theorem~\ref{thm:PsiPivotal}, we know that $\Psi^{\ra}$ is a pivotal Frobenius pivotal functor. As such functors preserve symmetric Frobenius algebras, and $\id_{\M}$ is a symmetric Frobenius algebra in $\Rex_{\C}(\M)$, it follows that $\Psi^{\ra}(\id_{\M})$ is a symmetric Frobenius algebra in $\Z(\C)$.

$(\Rightarrow)$ 
Suppose that $\Psi^{\ra}$ is special with nonzero constants $\beta_0,\beta_2$. Then, by Lemma~\ref{lem:dimPiv},
\begin{equation*}
    \dim(\Psi^{\ra}(\id_{\M})) = \beta_0\beta_2 \dim(\id_{\M}) = \beta_0\beta_2 \neq 0.
\end{equation*}

$(\Leftarrow)$
We know that $\Psi^{\ra}(\id_{\M})$ is a connected (Lemma~\ref{lem:connected}) Frobenius algebra of nonzero dimension. Thus, by Lemma~\ref{lem:speFrob}, $\Psi^{\ra}(\id_{\M})$ is special Frobenius. Hence, by Corollary~\ref{cor:PsiRaSS}, $\Psi^{\ra}$ is a special, pivotal Frobenius monoidal functor.
\end{proof}


\section{Unimodular exact comodule algebras}\label{sec:comoduleAlg}
In this section, we classify unimodular module categories over the finite tensor category $\C=\Rep(H)$ for $H$ a finite dimensional Hopf algebra. By \cite{andruskiewitsch2007module}, every left $\C$-module category is of the form $\M=\Rep(A)$ for $A$ a left $H$-comodule algebra. In this setting, $A$ is called \textit{exact} (resp., \textit{indecomposable}) if the $\C$-module category $\M$ is exact (resp., indecomposable). We call an indecomposable, exact $H$-comodule algebra $A$ \textit{unimodular} if $\Rep(A)$ is a unimodular $\Rep(H)$-module category. The main result of this section, Theorem~\ref{thm:uniclassification}, provides an explicit classification of unimodular $H$-comodule algebras. To accomplish this, we describe in detail the functor $\dS_{\M} \dN_{\M}$. However, by Theorem~\ref{thm:NakSerre}, $\dS_{\M}\cong_{\C} D_{\C}\tr \dN_{\M}$. Thus, we need to describe $D_{\C}$ and $\dN_{\M}$.

\smallskip

We start by providing background material on exact comodule algebras $A$ and explicitly describing the twisted $\C$-module structure of the Nakayama functor, $\dN_{\Rep(A)}$, of $\Rep(A)$ in Section~\ref{subsec:hopfNakayama}. Using this, we obtain an explicit description of the distinguished invertible object $D_{\Rep(H)}$, the Radford isomorphism and the Serre functor $\dS_{\Rep(A)}$ in Section~\ref{subsec:hopfRadford}. In Section~\ref{subsec:hopfUnimodular}, we describe the functor $\dS_{\Rep(A)} \dN_{\Rep(A)}$ and its left $\C$-module structure. Using this, we provide a criteria for unimodularity of $\M$, or equivalently the unimodularity of $A$, in terms of certain invertible elements in $A$ which we call as \textit{unimodular elements}, see Definition~\ref{defn:uniElement}. Finally, in Section~\ref{subsec:Taft}, we discuss in detail the example of Taft algebras.

\smallskip

\begin{notation}
Throughout this section, we work with vector spaces over a field $\kk$. We use the notation $X^*$ to denote the vector space $\Hom_{\kk}(X,\kk)$. For any finite dimensional vector space $X$, we let $\phi_X:X\rightarrow X^{**}$ denote the canonical isomorphism map. Let $\Delta,\varepsilon, S$ denote the comultiplication, counit and antipode of $H$ respectively. We will frequently use the Sweedler notation for calculations. For $X\in \Rep(H)$, we have that $\lv X=X\rv=X^*$ as vector space with $H$-actions as described below,
\begin{equation*}
    (h\cdot f)(x):= f(S(h)\cdot x),\;\;\;\;\;\;\;\ (h\cdot f')(x):= f'(S^{-1}(h)\cdot x)
\end{equation*}
for all $h\in H,f \in\lv X, f'\in X\rv$ and $x\in X$. 
Similarly, for $\phi_X(x)\in \lv\lv X$, $h\cdot \phi_X(x):=\phi_X(S^{2}(h)\cdot x)$ and for $\phi_X(x)\in X\rv\rv$, $h\cdot \phi_X(x):=\phi_X(S^{-2}(h)\cdot x)$.
\end{notation}


\subsection{Exact comodule algebras and the Nakayama functor}\label{subsec:hopfNakayama}
A \textit{left $H$-comodule algebra} is a left $H$-comodule $(A,\rho:A\rightarrow H\otimes A)$ with an algebra structure such that the multiplication and unit maps are $H$-comodule maps, that is,
\begin{equation*}
    \rho(a a') = a_{(-1)}a'_{(-1)} \ok a_{(0)} a'_{(0)}, \;\;\;\;\;\; \rho(1_A) = 1_H\ok 1_A \;\;\;\;\;\;\; (\forall \; a,a'\in A)
\end{equation*}
where $\rho(a)$ is denoted as $a_{(-1)}\otimes a_{(0)}\in H\otimes A$.
Then $\Rep(A)$ is a left $\Rep(H)$-module category via the action given by $\tr :\Rep(H) \times \Rep(A) \rightarrow \Rep(A)$ where $X\tr M=X\ok M$ as vector space, and the $A$-action on $X\tr M$ is defined as 
\[ a\cdot(x\otimes m)=a_{(-1)}\cdot x\otimes a_{(0)}\cdot m \hspace{1cm} (a\in A,\; x\in X,\; m\in M).  \]
Above, the first $\cdot$ is the action of $H$ on $X$ and the second $\cdot$ is the action of $A$ on $M$. 

\smallskip

By \cite[Lemma~4.5]{shimizu2019relative} (see also \cite{skryabin2007projectivity}), every exact left $H$-comodule algebra $A$ is a Frobenius algebra. 
Thus, we can endow $A$ with a \textit{Frobenius system}, that is, a triple $(\lambda_A,\{a^i \}, \{ b_i\})$ with $1\leq i\leq r=\text{dim}(A)$. Here $\lambda_A:A\rightarrow \kk$ is a linear map and $\{a^i\},$ $\{ b_i\}$ are two bases of $A$ such that $\langle \lambda_A, a^i b_j\rangle = \delta_{i,j}$ for all $i,j=1,\ldots, r$. 
The \textit{Nakayama automorphism} of $A$ (with respect to $\lambda_A$) is the unique algebra automorphism $\nu:= \nu_A:A\rightarrow A$ characterized by 
\begin{equation}\label{eq:NakayamaAuto}
    \langle \lambda_A,ab\rangle = \langle \lambda_A,\nu_A(b)a\rangle \hspace{1cm} (a,b\in A).
\end{equation}
By \cite[Lemma~4.2]{shimizu2019relative} , the following equalities hold for all $c\in A$:
\begin{equation}\label{eq:FrobSystem}
\langle\lambda_A,a^i\rangle b_i = 1_A = \langle\lambda_A,b_i\rangle a^i, \hspace{0.5cm} a^i c \otimes b_i = a^i\otimes cb_i, \hspace{0.5cm} \nu_A(c)  a^i\otimes b_i = a^i\otimes b_i c.   
\end{equation}
The formulae above omit the summations over $i$, and we will continue to do so.

\begin{notation}
We will use the following notations going forward.
\begin{enumerate}
    \item For a vector space $M$, we will denote the basis of $M$ by $m_i$ and the dual basis of $M^*$ by $m^i$. These satisfy $\langle m^i, m \rangle m_i = m$ for all $m\in M$.
    
    \item If $(X,\cdot)$ is a left $H$-module and $f:H'\rightarrow H$ is an algebra map. We use the notation $ ({}_{f}X, \cdot_{f})$ to denote the $H'$-module $X$ with action given by $h'\cdot_{f} x:= f(h')\cdot x$ for $h'\in H',x\in X$.

    \item The map $\phi_M^N:M^*\oA N \xrightarrow{\sim} \Hom_A(M,N)$ is an isomorphism given by $m^*\oA n\mapsto \langle m^*,?\rangle n$ with inverse $f\mapsto m^i\oA \langle f,m_i\rangle$.
\end{enumerate}
\end{notation}

\subsubsection{Nakayama functor of $\Rep(A)$}\label{subsubsec:hopfNakayama}
Let $A$ be a Frobenius algebra. We first provide a description of the Nakayama functor of $\Rep(A)$.

\begin{theorem}\label{thm:RepANak}
    Let $A$ be a Frobenius algebra with Nakayama automorphism $\nu$. Then, we have that 
    \begin{equation}\label{eq:nu}
        \dN_{\Rep(A)}(M)= \int^{N\in \Rep(A)} \HomA(M,N)^* \; \btr N = {}_{\nu}M \;\;\;\;\;\;\; (M\in\Rep(A)). 
    \end{equation}
    The projection maps  $\bi_{M,N}:\HomA(M,N)^* \; \btr N \rightarrow {}_{\nu}M$ of the coend are given by 
    \begin{equation}\label{eq:bi}
        \bi_{M,N}(\xi\ok n) = \langle \xi, \phi_M^N(m^i \oA a^j\cdot n)  \rangle \nu(b_j )\cdot m_i
    \end{equation}
    for all $\xi\in\HomA(M,N)^*$ and $n\in N$.
\end{theorem}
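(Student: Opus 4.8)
The plan is to compute the coend defining $\dN_{\Rep(A)}(M)$ explicitly and identify it with the twisted module ${}_\nu M$, using the Frobenius structure on $A$ to make the coend concrete. First I would recall that $\dN$ is defined as a coend over all of $\Rep(A)$, but since $A$ is a Frobenius algebra, $A$ is projective as a left $A$-module and is a generator, so the coend can be computed by restricting to the single object $N = A$ (or rather, using the fact that $\Hom_A(-,A)$ already detects everything). Concretely, the universal dinatural family $\bi_{M,N}$ must satisfy the coend relations, so the main task is: (i) write down a candidate map $\bi_{M,N}\colon \Hom_A(M,N)^*\btr N \to {}_\nu M$, (ii) check it is dinatural in $N$, and (iii) check the universal property, i.e., that any other dinatural family factors uniquely through it.

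For step (i), I would use the isomorphism $\phi_M^N\colon M^*\oA N \xrightarrow{\sim} \Hom_A(M,N)$ from the notation, so that $\Hom_A(M,N)^* \cong (M^*\oA N)^*$, and then pair against the Frobenius system $(\lambda_A,\{a^i\},\{b_i\})$. The formula \eqref{eq:bi} is essentially forced: given $\xi\in\Hom_A(M,N)^*$ and $n\in N$, one evaluates $\xi$ on the elementary morphisms $\phi_M^N(m^i\oA a^j\cdot n)$ and reassembles using the dual-basis identity $\langle m^i,m\rangle m_i = m$ together with the Frobenius relation $\langle\lambda_A,a^j\rangle b_j = 1_A$; the Nakayama automorphism $\nu$ enters precisely because we want the output to land in the $A$-module ${}_\nu M$ rather than $M$, and the twist is dictated by the relation $\nu(c)a^i\otimes b_i = a^i\otimes b_i c$ in \eqref{eq:FrobSystem}. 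I would verify that $\bi_{M,N}$ so defined is a morphism of $A$-modules (with target ${}_\nu M$) — this is the computation where the third identity in \eqref{eq:FrobSystem} gets used — and that it is natural/dinatural in $N$: given $g\colon N\to N'$ in $\Rep(A)$, the two ways of going around the dinaturality square agree because precomposition by $g$ on $\Hom_A(M,-)$ is adjoint to postcomposition, and the $a^j, b_j$ bookkeeping is symmetric.

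For step (iii), the universal property, I would exhibit, for an arbitrary dinatural family $\theta_N\colon \Hom_A(M,N)^*\btr N \to T$, the unique factoring map $\widehat\theta\colon {}_\nu M \to T$, defining it by evaluating $\theta$ at $N = A$ on a carefully chosen element (using that $\id_A$-type morphisms generate), and then checking that dinaturality of $\theta$ forces $\widehat\theta\circ\bi_{M,N} = \theta_N$ for all $N$. Alternatively — and this may be cleaner — I would invoke the general fact (e.g.\ from \cite{fuchs2020eilenberg}) that the right Nakayama functor of $\Rep(A)$ is computed by $N \mapsto \Hom_A(A,N)$-type formulas and is known abstractly to be $A^*\otimes_A -$, then identify $A^* \cong {}_\nu A$ as an $(A,A)$-bimodule via the Frobenius form $\lambda_A$ and the Nakayama automorphism, so that $A^*\oA M \cong {}_\nu A \oA M \cong {}_\nu M$; the content is then just unwinding this identification to extract the explicit projection formula \eqref{eq:bi}.

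The main obstacle I anticipate is bookkeeping rather than conceptual: getting the placement of $\nu$ correct (on which side, and whether it is $\nu$ or $\nu^{-1}$) and making sure the projection formula \eqref{eq:bi} is genuinely well-defined — i.e.\ independent of the choice of basis $\{m_i\}$ of $M$ and compatible with the tensor-over-$A$ relations inside $\Hom_A(M,N)^*\btr N$ — and then checking it really is $A$-linear into the \emph{twisted} module. All of this is a matter of carefully chasing the Frobenius relations in \eqref{eq:FrobSystem} and the dual-basis identities, with no genuine difficulty once the identification $A^*\cong {}_\nu A$ is in hand; the verification of the universal property is the step most likely to require care in writing out.
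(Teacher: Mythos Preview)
Your main approach (i)--(iii) is essentially the paper's own proof: the paper first checks that the proposed $\bi_{M,N}$ are $A$-linear and dinatural, then establishes the universal property by producing an explicit section $\omega\colon {}_\nu M \to \Hom_A(M,A)^*\btr A$, $m\mapsto \langle\lambda_A,?(m)\rangle\ok 1_A$, and showing that for any dinatural family $\mathbbm{j}$ the factorization is $\kappa = \mathbbm{j}_{M,A}\circ\omega$ --- exactly your ``evaluate at $N=A$ on a carefully chosen element.'' Your alternative route via $A^*\otimes_A(-)\cong {}_\nu(-)$ is also recorded in the paper (Lemma~\ref{lem:NakLemma}(d)), but there it is used for the twisted module structure rather than for this coend identification, since that route does not by itself hand you the explicit projection maps~\eqref{eq:bi}.
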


While this result is well-known to the experts, we could not find a direct proof of it in the literature. So, for the reader's convenience, we provide a proof in Appendix~\ref{app:ARepANak}.

\smallskip

Now suppose that $A$ is an exact left $H$-comodule algebra. Then, by Theorem~\ref{thm:Nak}(a), $\dN_{\Rep(A)}$ is a twisted left $\Rep(H)$-module functor. The following result explicitly describes this structure.

\begin{theorem}\label{thm:RepAtwistedLeft}
Let $A$ be an exact left $H$-comodule algebra.
The twisted left $\Rep(H)$-module structure $\fnl_{X,M}:{}_{\nu}(X\tr M) \rightarrow X\rv\rv \tr {}_{\nu}M $ of the Nakayama functor $\dN$ of $\Rep(A)$ is given by
\begin{equation}\label{eq:fnl}
    \fnl_{X,M}(x \ok m)= \langle \lambda_A, a^i_{(0)}\rangle \phi_X(S^{-1}(a^i_{-1})\cdot x) \ok \nu(b_i)\cdot m.  
\end{equation} 
The inverse of $\fnl_{X,M}$ is given by 
\begin{equation}\label{eq:ofnl}
    \ofn^l_{X,M}(\phi_X(x) \ok m) = \langle \lambda_A,a^i_{(0)} \rangle \nu(b_i)\cdot\big( S^{-2}(a^i_{(-1)}) \cdot x \ok m \big).
\end{equation}
\end{theorem}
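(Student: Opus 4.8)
The plan is to compute the twisted left $\Rep(H)$-module structure $\fnl_{X,M}$ of $\dN_{\Rep(A)}$ directly from its definition as a coend, using the explicit description of $\dN_{\Rep(A)}(M) = {}_{\nu}M$ together with the projection maps $\bi_{M,N}$ of Theorem~\ref{thm:RepANak}. Recall from Theorem~\ref{thm:Nak}(a) that the module constraint $\fn_{X,M}$ is the unique natural isomorphism making $\dN$ a $\C$-module functor $\M \to {}_{(-)\rv\rv}\M$; so the strategy is to \emph{verify} that the formula \eqref{eq:fnl} does define an $A$-linear map ${}_\nu(X\tr M)\to X\rv\rv \tr {}_\nu M$ that is compatible with the coend projections, and is natural in $X$ and $M$, and then conclude it must coincide with the canonical constraint. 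Concretely, the constraint is determined by a naturality/dinaturality diagram relating $\bi_{X\tr M, N}$ and $\bi_{M, N'}$ for suitable functors, and one reads off a formula; I would instead just write down \eqref{eq:fnl}, check it satisfies the defining property, and invoke uniqueness.

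First I would recall/set up the isomorphism $\Hom_A(X\tr M, N) \cong \Hom_A(M, \lv X \tr N)$ coming from the fact that $\lv X = X^*$ is left dual to $X$ in $\Rep(H)$ and $X\tr -$ has the expected adjoint on $\Rep(A)$, write it explicitly in Sweedler notation (using the $H$-action on $X^*$ from the Notation block: $(h\cdot f)(x) = f(S(h)x)$), and then feed this through the formula \eqref{eq:bi} for $\bi$. The appearance of $\langle \lambda_A, a^i_{(0)}\rangle$ and $S^{-1}(a^i_{(-1)})$ in \eqref{eq:fnl} should emerge precisely from combining: (i) the Frobenius system identities \eqref{eq:FrobSystem}, (ii) the $H$-comodule-algebra compatibility $\rho(aa') = a_{(-1)}a'_{(-1)} \ok a_{(0)}a'_{(0)}$ applied to the basis elements $a^i, b_i$, and (iii) the way the $A$-action on $X\tr M$ entangles the $H$-action on $X$ with the $A$-action on $M$. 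The double dual $X\rv\rv$ and the power $S^{-2}$ in \eqref{eq:ofnl} are forced by Theorem~\ref{thm:Nak}(a) stating the target is ${}_{(-)\rv\rv}\M$, consistent with the stated $H$-action $h\cdot \phi_X(x) = \phi_X(S^{-2}(h)x)$ on $X\rv\rv$.

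For the inverse formula \eqref{eq:ofnl}, the cleanest route is to write down the candidate $\ofn^l_{X,M}$ and check $\fnl_{X,M}\circ \ofn^l_{X,M} = \id$ and $\ofn^l_{X,M}\circ \fnl_{X,M} = \id$ by direct Sweedler-notation computation, repeatedly using the three identities in \eqref{eq:FrobSystem} (especially $\langle \lambda_A, a^i\rangle b_i = 1_A$ and $\nu_A(c)a^i \ok b_i = a^i \ok b_i c$) to collapse the double sums over the Frobenius bases, together with the antipode axioms $S(h_{(1)})h_{(2)} = \varepsilon(h)1_H$ and the coassociativity of $\rho$. One must also confirm $A$-linearity of both maps, where the subtlety is that the source carries the twisted action ${}_\nu(-)$: the Nakayama automorphism $\nu$ is exactly what makes \eqref{eq:bi} and hence \eqref{eq:fnl} $A$-linear, via \eqref{eq:NakayamaAuto}.

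\textbf{Main obstacle.} The conceptual content is light — everything is forced by Theorem~\ref{thm:Nak}(a) plus the explicit coend description in Theorem~\ref{thm:RepANak} — so the real difficulty is bookkeeping: carefully tracking the interplay of the three ``coordinates'' (the $H$-degree from the comodule structure, the two Frobenius bases $\{a^i\}, \{b_i\}$, and the $S$-powers from iterated dualization on $X$) without sign/summation errors, and being scrupulous about which actions are twisted by $\nu$ or $S^{\pm 1}, S^{\pm 2}$. I would organize this as: (1) state the defining property of $\fn$ from Theorem~\ref{thm:Nak}(a); (2) derive \eqref{eq:fnl} by transporting \eqref{eq:bi} across the adjunction $\Hom_A(X\tr M, N)\cong \Hom_A(M,\lv X\tr N)$; (3) verify \eqref{eq:fnl} is well-defined, $A$-linear and natural; (4) verify \eqref{eq:ofnl} is a two-sided inverse. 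Given the length, steps (2)–(4) are routine but lengthy Sweedler computations, and — as with Theorem~\ref{thm:RepANak} — it may be cleanest to relegate the verification to an appendix, stating here only the setup and the key identities from \eqref{eq:FrobSystem} that drive the collapse of the sums.
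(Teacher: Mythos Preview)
Your proposal is sound in principle but takes a different route from the paper. The paper does \emph{not} compute $\fnl_{X,M}$ directly from the coend description. Instead it first proves the \emph{right}-comodule-algebra analogue, Theorem~\ref{thm:RepAtwistedRight}, by importing from \cite[Theorem~7.3]{shibata2021modified} the already-known twisted module structure on the functor $A^*\otimes_A(-)$ and then conjugating by the explicit isomorphisms $\alpha_M,\beta_M$ of Lemma~\ref{lem:NakLemma}(d) between $A^*\otimes_A M$ and ${}_{\nu}M$. Theorem~\ref{thm:RepAtwistedLeft} is then obtained in one line by applying Theorem~\ref{thm:RepAtwistedRight} to the exact \emph{right} $H$-comodule algebra $A^{\op}$.

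What each approach buys: the paper's route is short and essentially computation-free, since the Sweedler work is outsourced to \cite{shibata2021modified} and the passage $A\rightsquigarrow A^{\op}$ is formal; the cost is an external dependence and the need to track how the left/right conventions and the antipode powers change under $(-)^{\op}$. Your route is self-contained and conceptually transparent --- it explains \emph{why} the particular $S^{-1}$ appears, namely from the adjunction $\Hom_A(X\tr M,N)\cong\Hom_A(M,\lv X\tr N)$ fed through the coend projections $\bi$ --- but the bookkeeping you flag in your ``main obstacle'' is real, and you would effectively be reproving a special case of \cite[Theorem~7.3]{shibata2021modified}. If you pursue your approach, you should state precisely which universal property from \cite{fuchs2020eilenberg} pins down $\fn$ (it is the canonical isomorphism $\dN\circ(X\tr-)\cong (X\tr-)^{\rra}\circ\dN$ induced by the coend), since ``compatible with the coend projections'' needs to be made exact before uniqueness can be invoked.
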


Now, let $B$ be a right $H$-comodule algebra. Then $\Rep(B)$ is a right $\Rep(H)$-module category. In this case, the Nakayama functor $\dN_{\Rep(B)}$ is a twisted right $\Rep(H)$-module functor and the following result describes this structure.

\begin{theorem}\label{thm:RepAtwistedRight}
Let $B$ be an exact right $H$-comodule algebra.
The twisted right $\C$-module structure $\fnr_{X,M}:  {}_{\nu}(M\tl X) \rightarrow  {}_{\nu}M \tl \lv\lv X$ of the Nakayama functor $\dN$ of $\Rep(B)$ is given by
\begin{equation}\label{eq:fnr}
    \fnr_{X,M}(m \ok x)= \langle \lambda_B, a^i_{(0)}\rangle \nu(b_i)\cdot m \ok \phi_X(S(a^i_{(1)}) \cdot x).
\end{equation}
The inverse of $\fnr_{X,M}$ is given by 
\begin{equation}\label{eq:ofnr}
    \ofn^r_{X,M}(m\ok \phi_X(x)) = \langle\lambda_B,a^i_{(0)} \rangle \nu(b_i) \cdot \big( m \ok S^2(a^i_{(1)}) \cdot x \big).
\end{equation}
\end{theorem}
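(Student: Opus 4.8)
The plan is to deduce this result from Theorem~\ref{thm:RepAtwistedLeft} by a ``mirror'' argument, rather than repeating the computation from scratch. The key observation is that a right $H$-comodule algebra $B$ is the same thing as a left $H^{\textnormal{cop}}$-comodule algebra (where $H^{\textnormal{cop}}$ denotes $H$ with the opposite coproduct, which is again a Hopf algebra with antipode $S^{-1}$), and that a right $\Rep(H)$-module category is the same as a left $\Rep(H)^{\rev}$-module category. Moreover, $\Rep(H)^{\rev} \simeq \Rep(H^{\textnormal{cop}})$ as finite tensor categories, and under this identification the right action $M\tl X$ of $\Rep(H)$ becomes the left action $X\tr M$ of $\Rep(H^{\textnormal{cop}})$. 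Thus $\dN_{\Rep(B)}$, viewed as a twisted left $\Rep(H^{\textnormal{cop}})$-module functor, has its module constraint given by formula \eqref{eq:fnl} applied with $H$ replaced by $H^{\textnormal{cop}}$ and $A$ replaced by $B$, and then one translates back.

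Concretely, I would carry out the following steps. First, record the translation dictionary: in $H^{\textnormal{cop}}$ the antipode is $S^{-1}$, so $S^{-1}_{H^{\textnormal{cop}}} = S$; the coaction $\rho_B\colon B\to H\otimes B$ of $B$ as a \emph{left} $H^{\textnormal{cop}}$-comodule algebra is obtained from the right $H$-coaction $b\mapsto b_{(0)}\otimes b_{(1)}$ by swapping tensor factors, i.e.\ $b_{(-1)}\otimes b_{(0)}$ (in $H^{\textnormal{cop}}\otimes B$ notation) corresponds to $b_{(1)}\otimes b_{(0)}$ (in the right-comodule notation). Second, note the Frobenius system $(\lambda_B,\{a^i\},\{b_i\})$ and the Nakayama automorphism $\nu_B$ are intrinsic to the algebra $B$ and do not change. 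Third, substitute into \eqref{eq:fnl}: the term $\phi_X(S^{-1}(a^i_{(-1)})\cdot x)$ becomes $\phi_X\big(S^{-1}_{H^{\textnormal{cop}}}(a^i_{(1)})\cdot x\big) = \phi_X(S(a^i_{(1)})\cdot x)$, while $\nu(b_i)\cdot m$ is unchanged; reorganizing the tensor factors into the right-module order $M\tl X$ yields exactly \eqref{eq:fnr}. Fourth, apply the same substitution to \eqref{eq:ofnl} to obtain \eqref{eq:ofnr}, using that $S^{-2}_{H^{\textnormal{cop}}} = S^2$. Finally, one should verify that the left-dual-of-left-dual appearing in the codomain of \eqref{eq:fnl} becomes the right-dual-of-right-dual $\lv\lv X$ needed in \eqref{eq:fnr} (equivalently $X\rv\rv$ for $\Rep(H^{\textnormal{cop}})$ matches $\lv\lv X$ for $\Rep(H)$), which is consistent with the $H$-action conventions fixed in the Notation block of this section, where $h\cdot\phi_X(x)$ on $\lv\lv X$ uses $S^2$.

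The main obstacle I anticipate is bookkeeping the conventions precisely: the paper works with strictified duals and a fixed choice of left/right evaluation and coevaluation, and one must make sure the ``mirror'' identification $\Rep(H)^{\rev}\simeq\Rep(H^{\textnormal{cop}})$ is compatible with \emph{those} choices (including the direction of the module constraint and the placement of $S$ versus $S^{-1}$ in the $H$-action on duals). An alternative, if the mirror identification turns out to be delicate, is to give a direct proof parallel to that of Theorem~\ref{thm:RepAtwistedLeft} (presumably located in the appendix): namely, realize $\dN_{\Rep(B)}(M) = {}_{\nu}M$ via Theorem~\ref{thm:RepANak}, write down the right $\Rep(H)$-module coherence isomorphism $\fn^r$ abstractly from Theorem~\ref{thm:Nak}(a), and then unwind it using the Frobenius-system identities \eqref{eq:FrobSystem} and the right-comodule-algebra axioms, checking \eqref{eq:ofnr} is a two-sided inverse of \eqref{eq:fnr} by a direct Sweedler-notation computation. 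Either way the computation is routine once the conventions are pinned down; no new ideas beyond those already used for Theorem~\ref{thm:RepAtwistedLeft} are required.
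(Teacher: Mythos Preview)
Your mirror argument via $H^{\textnormal{cop}}$ is mathematically sound, but in this paper the logical dependency runs the other way: Theorem~\ref{thm:RepAtwistedRight} is proved \emph{first}, directly, and Theorem~\ref{thm:RepAtwistedLeft} is then deduced from it by applying Theorem~\ref{thm:RepAtwistedRight} to the right $H$-comodule algebra $A^{\op}$ (see the sentence immediately following the statement of Theorem~\ref{thm:RepAtwistedRight}). So your primary plan---deduce \ref{thm:RepAtwistedRight} from \ref{thm:RepAtwistedLeft}---would be circular as the paper is organized. The $H^{\textnormal{cop}}$ trick and the $A^{\op}$ trick are two sides of the same coin; the paper simply chooses the right-comodule case as the base case because that is the setting of the external input \cite[Theorem~7.3]{shibata2021modified}.

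Your ``alternative'' direct approach is essentially what the paper does, though with one extra ingredient you did not name: the paper does not unwind the abstract coherence isomorphism of Theorem~\ref{thm:Nak}(a) by hand. Instead it imports from \cite[Theorem~7.3]{shibata2021modified} an explicit formula for the twisted right $\Rep(H)$-module constraint of the functor $B^{*}\otimes_{B}(-)$, and then transports it along the natural isomorphisms $\alpha_M\colon B^{*}\otimes_{B} M \xrightarrow{\sim} {}_{\nu}M$ and $\beta_M = \alpha_M^{-1}$ of Lemma~\ref{lem:NakLemma}(d). Concretely, $\fn^r_{X,M} = (\alpha_M\tl\id)\circ \Psi_{M,X}^{-1}\circ \beta_{M\tl X}$ and $\ofn^r_{X,M} = \alpha_{M\tl X}\circ \Psi_{M,X}\circ(\beta_M\tl\id)$, where $\Psi_{M,X}$ is the constraint from \cite{shibata2021modified}; the dual-comodule formula \eqref{eq:dualComodule} for $\lambda_{(0)}\otimes\lambda_{(1)}$ converts the $B^{*}$-side expression into the Frobenius-system form \eqref{eq:fnr}, \eqref{eq:ofnr}. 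If you want a self-contained proof you would need to either reprove the Shibata--Shimizu formula or, as you suggest, unwind Theorem~\ref{thm:Nak}(a) directly; the paper takes the shortcut of citing it.
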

\noindent
Building upon \cite[Theorem~7.3]{shibata2021modified}, a proof of Theorem~\ref{thm:RepAtwistedRight} is provided in Appendix~\ref{app:ARepAtwisted}. Then, Theorem~~\ref{thm:RepAtwistedLeft} is proved by applying Theorem~\ref{thm:RepAtwistedRight} to the exact right $H$-comodule algebra $A^{\op}$.


\subsection{Radford isomorphism} \label{subsec:hopfRadford}
In this section, we calculate the distinguished invertible objects (in Section~\ref{subsubsec:distinguished}) and the Radford isomorphism (in Section \ref{subsubsec:Radford}) for the finite tensor category $\Rep(H)$. This will allow us to explicitly describe the Serre functor of the module category $\Rep(A)$ (in Section~\ref{subsubsec:hopfSerre}).
We note the Radford isomorphism for $\Rep(H)$ and a Serre functor of $\Rep(A)$ are known by prior work of Shimizu \cite{shimizu2019relative}. Our purpose of calculating them again below is to ensure that our conventions are consistent throughout for future use here and in subsequent works.

\subsubsection{(Co)integrals} \label{subsubsec:cointegrals}
To start, we set notations for (co)integrals from \cite[Chapter~10]{radford2011hopf}. A \textit{left integral} is a nonzero element $\Lambda\in H$ satisfying $h \Lambda = \Lambda \varepsilon(h)$ for all $h\in H$. Then there exists a unique algebra map $\alpha_H:H\rightarrow \kkx$, called the \textit{distinguished character}, such that $\Lambda h= \langle \alpha_H,h\rangle \Lambda$ holds for all $h\in H$. Thus, we have that
\begin{equation}\label{eq:alphaGrouplike}
    \langle \alpha_H, h h'\rangle = \langle\alpha_H, h\rangle \langle\alpha_H,h' \rangle, \;\;\;\;\;\;\; \langle \alpha_H, 1_H\rangle =1 \;\;\;\;\;\;\;\;\; \text{for all} \; h,h'\in H.
\end{equation}
A Hopf algebra $H$ is called \textit{unimodular} if $\alpha_H=\varepsilon$.  
A \textit{right cointegral} of $H$ is a nonzero element $\lambda_H\in H^*$ satisfying
\begin{equation}\label{eq:rightCointegral1}
    \langle\lambda_H,h_{(1)}\rangle h_{(2)} = \langle \lambda_H,h\rangle 1_H \; \text{for all}\; h\in H.
\end{equation} 
Then there is a unique grouplike element $g_H\in H$ satisfying 
\begin{equation}\label{eq:rightCointegral2}
    h_{(1)} \langle\lambda_H,h_{(2)} \rangle = \langle \lambda_H,h\rangle g_H \; \text{for all} \; h\in H.
\end{equation} 
From here on, we fix a left integral $\Lambda$ and right cointegral $\lambda_H$ satisfying $\langle \lambda_H,\Lambda \rangle = 1$. Also, set $\overline{\alpha}_H :=\alpha_H\circ S$ and $\ogH := g_H^{-1}$.


\subsubsection{Distinguished invertible object of $\Rep(H)$}\label{subsubsec:distinguished}
It is well known that finite dimensional Hopf algebras are Frobenius with the Frobenius form given by any right cointegral $\lambda_H$. By \cite[Theorem~3(a,b)]{radford1994trace}, we get that the Nakayama automorphism of $H$ is given by 
\begin{equation*}\label{eq:nuH}
    \nu_H(h) = \langle\alpha_H ,h_1\rangle S^2(h_2)
\end{equation*} 
and its inverse $\onu$ is given by
\begin{equation*}
    \onu_H(h) = S^2(\ogH h_1 \gH) \langle \alpha_H, \gH S(h_2) \ogH\rangle \stackrel{(\textnormal{\ref{eq:alphaGrouplike}})}{=} S^2(\ogH h_1 \gH) \langle \alpha_H, S(h_2) \rangle .
\end{equation*}

\begin{lemma}\label{lem:D}
    We have the following results for $\unit_{\Rep(H)}=\kk$.
\begin{enumerate}
    \item[\upshape{(a)}] $\dN_{\Rep(H)}(\kk)=\kk$ as a vector space with $H$-action given by $h\star c:= \langle \alpha_H,h\rangle c$. 
    \item[\upshape{(b)}] $D_{\Rep(H)}=\overline{\dN}_{\Rep(H)}(\kk)=\kk$ as vector space with $H$-action given by $h \ostar c:= \langle \alpha_H, S(h) \rangle c$.
\end{enumerate}
\end{lemma}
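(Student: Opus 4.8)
The statement to prove is Lemma~\ref{lem:D}, describing $\dN_{\Rep(H)}(\kk)$ and $D_{\Rep(H)} = \overline{\dN}_{\Rep(H)}(\kk)$ as the one-dimensional $H$-modules with actions twisted by the distinguished character. The natural approach is to recognize that $\Rep(H)$ is itself the module category $\Rep(A)$ for the $H$-comodule algebra $A = H$ (with comodule structure $\rho = \Delta$), and then specialize the general formulas already established earlier in the excerpt to the unit object $M = \kk = \unit_{\Rep(H)}$. In particular, Theorem~\ref{thm:RepANak} gives $\dN_{\Rep(A)}(M) = {}_{\nu}M$ for $A$ a Frobenius algebra with Nakayama automorphism $\nu$, and the preceding computation records $\nu_H(h) = \langle \alpha_H, h_{(1)}\rangle S^2(h_{(2)})$.

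For part (a): I would apply Theorem~\ref{thm:RepANak} with $A = H$, so $\dN_{\Rep(H)}(\kk) = {}_{\nu_H}\kk$ as a vector space. On the trivial module $\kk$, the $H$-action factors through $\varepsilon$, i.e.\ $h \cdot c = \varepsilon(h)c$; twisting by $\nu_H$ gives $h \star c = \varepsilon(\nu_H(h)) c$. Then compute $\varepsilon(\nu_H(h)) = \varepsilon(\langle \alpha_H, h_{(1)}\rangle S^2(h_{(2)})) = \langle \alpha_H, h_{(1)}\rangle \varepsilon(S^2(h_{(2)})) = \langle \alpha_H, h_{(1)}\rangle \varepsilon(h_{(2)}) = \langle \alpha_H, h\rangle$, using that $\varepsilon \circ S = \varepsilon$ (hence $\varepsilon \circ S^2 = \varepsilon$) and the counit axiom. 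This yields $h \star c = \langle \alpha_H, h\rangle c$, as claimed.

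For part (b): since $\M = \Rep(H)$ is exact over itself, Theorem~\ref{thm:Nak}(b) says $\overline{\dN}$ is the quasi-inverse of $\dN$, so $D_{\Rep(H)} = \overline{\dN}_{\Rep(H)}(\kk)$ is the one-dimensional $H$-module that $\dN$ sends to $\kk$ with its standard (trivial) action. Alternatively, and more directly, one can use $\overline{\dN}_{\Rep(A)}(M) = {}_{\nu^{-1}}M$ (the inverse Nakayama twist), together with the recorded formula $\onu_H(h) = S^2(\ogH h_{(1)} \gH)\langle \alpha_H, S(h_{(2)})\rangle$. Applying $\varepsilon$: $\varepsilon(\onu_H(h)) = \varepsilon(S^2(\ogH h_{(1)}\gH))\langle \alpha_H, S(h_{(2)})\rangle = \varepsilon(\ogH)\varepsilon(h_{(1)})\varepsilon(\gH)\langle \alpha_H, S(h_{(2)})\rangle = \langle \alpha_H, S(h)\rangle$, since $\varepsilon(\gH) = \varepsilon(\ogH) = 1$ for grouplike elements and $\varepsilon \circ S^2 = \varepsilon$. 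Hence $h \ostar c = \langle \alpha_H, S(h)\rangle c = \langle \overline{\alpha}_H, h\rangle c$, matching the asserted action.

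\textbf{Main obstacle.} The computations themselves are routine Sweedler-notation manipulations; the only genuine point requiring care is making sure the identification $\Rep(H) = \Rep(A)$ for $A = H$ as a comodule algebra over itself is set up so that the module-category action of $\Rep(H)$ on $\Rep(H)$ agrees with the left-tensor action used in Theorem~\ref{thm:RepANak}, and that the Frobenius form on $H$ (a right cointegral $\lambda_H$) is the one producing precisely the Nakayama automorphism $\nu_H$ quoted from \cite{radford1994trace}. Once these normalization/convention matches are verified, both parts follow immediately by evaluating $\varepsilon$ on $\nu_H$ and $\onu_H$ respectively. I would therefore state the proof as: ``Apply Theorem~\ref{thm:RepANak} with $A = H$ and evaluate the counit on the Nakayama automorphism,'' and include the one-line counit computations above.
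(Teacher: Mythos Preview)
Your proposal is correct and follows essentially the same approach as the paper: apply Theorem~\ref{thm:RepANak} with $A=H$ to identify $\dN_{\Rep(H)}(\kk)={}_{\nu_H}\kk$ and $\overline{\dN}_{\Rep(H)}(\kk)={}_{\overline{\nu}_H}\kk$, then evaluate $\varepsilon\circ\nu_H$ and $\varepsilon\circ\overline{\nu}_H$ using the counit axiom and $\varepsilon\circ S=\varepsilon$. The paper's proof is slightly terser for part~(b) (it just says ``using the same argument as above''), but your explicit computation with $\overline{\nu}_H$ is exactly what is intended.
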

\begin{proof}
By Theorem~\ref{thm:RepANak}, $\dN(\kk)={}_{\nu}\kk$. Since, the $H$-action on $\kk$ is given by $h\cdot c=\langle \varepsilon_H,h\rangle c$, we get that that the $H$-action $\star$ on ${}_{\nu_H}\kk$ is given by 
\begin{equation*}
    h\star c 
    =\langle\varepsilon_H,\nu_H(h)\rangle c 
    = \langle\alpha_H,h_1\rangle \langle\varepsilon_H,S^2(h_2) \rangle c 
    = \langle \alpha_H,h_1\rangle \langle  \varepsilon_H , h_2\rangle 
    = \langle\alpha_H, h\rangle c.
\end{equation*}
As the functor $\odN$ is the quasi-inverse of $\dN$, it clear that $\odN(M)={}_{\onu}M$. Now using the same argument as above, the claim follows.
\end{proof}

\subsubsection{Radford isomorphism of $\Rep(H)$}\label{subsubsec:Radford}
The following material is based on the discussion in \cite[\S6.3]{shibata2021modified}. The Hopf algebra $H$ is a left and right $H$-comodule algebra. Thus, the category $\Rep(H)$ is $ \Rep(H)$-bimodule category. By using Theorems~\ref{thm:RepAtwistedLeft} and \ref{thm:RepAtwistedRight} with $A=B=H$, one obtains the following map.
\begin{equation}\label{eq:gX}
    g_X:\dN(\unit)\tl \lv\lv X \xrightarrow{\ofn^r_{X,\unit }} \dN(\unit \tl X) \xrightarrow{\textnormal{flip}} \dN(X\tr \unit) \xrightarrow{\fn^l_{X,\unit}} X\rv\rv \tr \dN(\unit) 
\end{equation}
As explained in \cite[Remark~4.11]{shimizu2017ribbon}, the Radford isomorphism $\fR_X$ can be described using the map $g_X$ as follows. 
First recall that by \cite[Lemma~4.11]{fuchs2020eilenberg}, $D =\overline{\dN}(\unit)$ is the right dual of $\dN(\unit)$. The evaluation and coevaluation maps are trivial identity maps. Then, $\fR_X$ is equal to the following composition.
\begin{equation}\label{eq:radIso1}
    \lv\lv X\otimes D 
    \xrightarrow{\coev_D\otimes \id}   
    D\otimes \dN(\unit) \otimes \lv\lv X \otimes D  
    \xrightarrow{\id\otimes g_X\otimes \id} 
    D\otimes X\rv\rv \otimes  \dN(\unit) \otimes D \xrightarrow{\id\otimes \ev_D}
    D\otimes X\rv\rv
\end{equation}
Thus, we first calculate the map $g_X$ below.

\begin{lemma}
The map $g_X$ in $(\ref{eq:gX})$ is given by $c\ok \phi_X(x) \mapsto \phi_X(\ogH\cdot x) \ok c$ for all $x\in X$ and $ X\in \Rep(A)$.
\end{lemma}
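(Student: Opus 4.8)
The plan is to compute the composite map $g_X$ from \eqref{eq:gX} directly using the explicit formulas for the twisted module structures of the Nakayama functor established in Theorems~\ref{thm:RepAtwistedLeft} and \ref{thm:RepAtwistedRight}, applied with $A = B = H$. Recall from Lemma~\ref{lem:D}(a) that $\dN(\unit) = \dN_{\Rep(H)}(\kk) = \kk$ with $H$-action $h \star c = \langle \alpha_H, h\rangle c$, so elements of $\dN(\unit)$ are just scalars $c \in \kk$. The strategy is to trace a general element $c \ok \phi_X(x) \in \dN(\unit) \tl \lv\lv X$ through the three maps $\ofn^r_{X,\unit}$, the flip, and $\fn^l_{X,\unit}$ in turn.

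First I would apply $\ofn^r_{X,\unit}$ using \eqref{eq:ofnr} with $B = H$, noting that for $H$ the Frobenius form is the right cointegral $\lambda_H$ and the relevant Frobenius-system identities \eqref{eq:FrobSystem} together with the cointegral relations \eqref{eq:rightCointegral1} and \eqref{eq:rightCointegral2} let one collapse the sum $\langle \lambda_H, a^i_{(0)}\rangle \cdots$; here $\unit = \kk$ carries the trivial $H$-action, so the $\nu(b_i)\cdot$ term acts by $\varepsilon$ on the $\kk$-factor. One expects the grouplike element $g_H$ (or its inverse) to appear precisely through the relation \eqref{eq:rightCointegral2} governing how $\lambda_H$ interacts with the coproduct, together with the formula for $S^2$ acting on $X\rv\rv$. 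Next, the flip swaps the two tensor factors (turning a right action into a left action since $\unit \tl X \cong X \tr \unit$ via the trivial $H$-action on $\unit$). Finally I would apply $\fn^l_{X,\unit}$ using \eqref{eq:fnl}, again with $A = H$, collapsing the Frobenius sum in the same manner; this second pass should contribute the $S^{-1}$-twist that, combined with the $S$-twist from the first pass, leaves only the grouplike $\ogH = g_H^{-1}$ acting on $x$, with the scalar $c$ carried along untouched.

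The bookkeeping obstacle is keeping the various powers of the antipode straight: $\ofn^r$ involves $S^2$ acting on $X\rv\rv$, $\fn^l$ involves $S^{-1}$ acting on $X$, and the target objects $X\rv\rv$ and $\lv\lv X$ carry their own $S^{\pm 2}$-twisted actions as spelled out in the Notation at the start of Section~\ref{sec:comoduleAlg}. The key simplification is that on $\unit = \kk$ the comodule structure is $\rho(1) = 1_H \ok 1$, so the Frobenius-system sums over $H$ reduce via $\langle \lambda_H, h_{(1)}\rangle h_{(2)} = \langle \lambda_H, h\rangle 1_H$ and its companion, and the only surviving group-like datum is $g_H$. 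After carefully composing, one reads off
\begin{equation*}
    g_X(c \ok \phi_X(x)) = \phi_X(\ogH \cdot x) \ok c,
\end{equation*}
as claimed. I expect the main obstacle to be purely notational — correctly matching the conventions for the twisted actions on double-dual objects with the antipode powers coming out of \eqref{eq:ofnr} and \eqref{eq:fnl} — rather than anything conceptually deep; once the Frobenius-system identities are invoked, the computation is essentially forced.
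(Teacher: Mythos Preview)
Your proposal is correct and follows the same approach as the paper: apply \eqref{eq:ofnr}, flip, then \eqref{eq:fnl} with $A=B=H$, and simplify using the cointegral relations \eqref{eq:rightCointegral1}--\eqref{eq:rightCointegral2} together with the Frobenius-system identity $\langle \lambda_H, a^i\rangle b_i = 1_H$ from \eqref{eq:FrobSystem}. The one imprecision worth noting is that the simplification is driven not by the comodule structure of $\unit$ (which does not enter the formulas) but by the fact that the coaction on $A=B=H$ is the comultiplication $\Delta$ itself, so the cointegral identities apply directly to the terms $\langle\lambda_H,a^i_{(1)}\rangle a^i_{(2)}$ and $a^j_{(1)}\langle\lambda_H,a^j_{(2)}\rangle$; this is what produces the $S^{-1}(g_H)=\ogH$ factor.
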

\begin{proof}
Fix any Frobenius system $(\lambda_H, \{a^i\}, \{b_i \})$ of $H$ with $\lambda_H$ is a right integral of $H$. 
\begin{align*}
c\ok \phi_X(x) & \;\;\;\stackrel{(\textnormal{\ref{eq:gX}})}{\mapsto} \;\; \fn^l_{X,\unit}\circ \textnormal{flip}\circ \ofn^r_{X,\unit}(c\ok \phi_X(x)) 
\\[-0.3em]
& \;\;\;\stackrel{(\textnormal{\ref{eq:ofnr}})}{=}  \;\;\fn^l_{X,\unit} \circ \textnormal{flip} \big[\langle\lambda_H,a^i_{(1)}  \rangle \nu(b_i)\cdot \big(c \ok S^{2}(a^i_{1})\cdot x\big) \big] 
\\[-0.1em]
& \;\;\;\;\,\,=  \;\;\;\; \fn^l_{X,\unit}\big[\langle\lambda_H,a^i_{(1)}  \rangle \big(  \nu(b_i)_{(2)} S^{2}(a^i_{2})\cdot x  \ok \nu(b_i)_{(1)}\cdot c \big) \big] 
\\[-0.3em]
& \;\;\;\;\stackrel{(\textnormal{\ref{eq:fnl}})}{=}  \;\;\langle \lambda_H,a^j_{(2)}\rangle  \langle\lambda_H,a^i_{(1)}  \rangle  \phi_X(S^{-1}(a^j_{1}) \nu(b_i)_{(2)} S^{2}(a^i_{2}) \cdot x) \ok \nu(b_j) \nu(b_i)_{(1)}\cdot c
\\[-0.3em]
& \,\stackrel{(\textnormal{\ref{eq:rightCointegral1}},\textnormal{\ref{eq:rightCointegral2}})}{=} 
\langle \lambda_H,a^j\rangle  \langle\lambda_H,a^i\rangle 
\phi_X(S^{-1}(g_H) \nu(b_i)_{(2)} S^{2}(1_H) \cdot x) \ok \nu(b_j) \nu(b_i)_{(1)}\cdot c 
\\[-0.35em]
& \;\;\;\;\stackrel{(\textnormal{\ref{eq:FrobSystem}})}{=} \;\;\;  \phi_X(\ogH \cdot x) \ok c.   \qedhere
\end{align*}
\end{proof}

\begin{remark}
For every finite dimensional Hopf algebra $H$, Radford \cite{radford1976order} proved the following formula between the fourth power of antipode.
\begin{equation}\label{eq:S4}
    S^4(h) = g_H \; [\alpha_H(h_1)\, h_2 \, \overline{\alpha}_H(h_3)]\, \ogH.
\end{equation}
The fact that $g_X$ is a morphism of left $H$-modules is equivalent to above mentioned formula for the fourth power of antipode after plugging $h=\alpha_H\rightharpoonup S^{-2}(h)$ in (\ref{eq:S4}).
\end{remark}

We are now ready to provide a formula for the Radford isomorphism.
\begin{proposition}
The Radford isomorphism (\ref{eq:Radford}) for the category $\C=\Rep(H)$ is given by
\begin{align}\label{eq:radfordIsomorphism}
    \fR_X: \lv\lv X \otimes D \rightarrow  D \otimes  X \rv\rv, \hspace{0.4cm}    \phi_X(x) \otimes c  \mapsto c \otimes \phi_X(\ogH\cdot x) \hspace{0.6cm} \text{for}\; c\in D, \, x\in X.
\end{align}
\end{proposition}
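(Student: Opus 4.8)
The plan is to obtain the formula for $\fR_X$ by unwinding the composition (\ref{eq:radIso1}) and plugging in the description of $g_X$ just computed. Since $D = \overline{\dN}(\unit)$ is the right dual of $\dN(\unit)$ with evaluation and coevaluation given by trivial identity maps (as recalled before (\ref{eq:radIso1})), the maps $\coev_D$ and $\ev_D$ act essentially as bookkeeping that inserts and removes the pair $D \otimes \dN(\unit)$; concretely, on underlying vector spaces $\dN(\unit) = \kk$ and $D = \kk$, so these are all identifications of $\kk$ with $\kk \otimes \kk$.

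\medskip

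First I would fix the underlying vector space identifications: by Lemma~\ref{lem:D}, both $\dN(\unit)$ and $D$ are the one-dimensional space $\kk$ (with twisted $H$-actions), and $\lv\lv X = X\rv\rv = X^*{}^* \cong X$ as vector spaces via $\phi_X$. Then I would trace an element $\phi_X(x) \otimes c \in \lv\lv X \otimes D$ through the three arrows of (\ref{eq:radIso1}):
\begin{itemize}
\item Under $\coev_D \otimes \id$ it maps to (an element corresponding to) $1_D \otimes 1_{\dN(\unit)} \otimes \phi_X(x) \otimes c$ in $D \otimes \dN(\unit) \otimes \lv\lv X \otimes D$, where $1_D, 1_{\dN(\unit)}$ denote the chosen basis vectors of the one-dimensional spaces.
\item Under $\id \otimes g_X \otimes \id$, the middle factor $\dN(\unit) \otimes \lv\lv X = \dN(\unit)\tl \lv\lv X$ is sent via $g_X$; by the preceding lemma $g_X(1_{\dN(\unit)} \otimes \phi_X(x)) = \phi_X(\ogH \cdot x) \otimes 1_{\dN(\unit)}$, so we land on $1_D \otimes \phi_X(\ogH\cdot x) \otimes 1_{\dN(\unit)} \otimes c$ in $D \otimes X\rv\rv \otimes \dN(\unit) \otimes D$.
\item Under $\id \otimes \ev_D$, the pair $\dN(\unit) \otimes D$ is contracted by the trivial evaluation, leaving $1_D \otimes \phi_X(\ogH\cdot x)$; absorbing the scalar $c$ that was carried along gives $c \otimes \phi_X(\ogH \cdot x) \in D \otimes X\rv\rv$.
\end{itemize}
This yields exactly (\ref{eq:radfordIsomorphism}).

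\medskip

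The only real care needed is in the first and last steps: one must check that the evaluation/coevaluation of the dual pair $(\dN(\unit), D)$ really are the identity maps under the chosen identifications (this is the content of \cite[Lemma~4.11]{fuchs2020eilenberg} cited in the text, together with the fact that both objects are literally $\kk$ as vector spaces), so that no extra scalar is introduced. I expect the main obstacle to be purely notational: keeping straight which copy of $\kk$ is $\dN(\unit)$ versus $D$, and making sure the scalar $c$ propagates correctly through the braided-style zig-zag of (\ref{eq:radIso1}) rather than getting multiplied or dropped. Once the bookkeeping is organized, the computation is immediate from the formula for $g_X$, and it remains only to remark that $\fR_X$ is manifestly a natural isomorphism since $g_X$ is (being built from the module constraints $\fn^l, \ofn^r$, which are natural isomorphisms by Theorems~\ref{thm:RepAtwistedLeft} and \ref{thm:RepAtwistedRight}).
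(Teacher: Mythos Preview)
Your proposal is correct and follows essentially the same approach as the paper: both trace an element $\phi_X(x)\otimes c$ through the three maps in (\ref{eq:radIso1}) using the formula for $g_X$ from the preceding lemma, with the only difference being that you add more commentary about the bookkeeping of the one-dimensional spaces.
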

\begin{proof}
Plugging the formula for $g_X$ into $(\ref{eq:radIso1})$, we get that
\begin{equation*}
    \phi_X(x)\ok c \xmapsto{\coev_D\otimes \id}
    1\ok 1\ok \phi_X(c)\ok c \xmapsto{\id\otimes g_X\otimes \id}
    1\ok \phi_X(\ogH\cdot x) \ok 1\ok c \xmapsto{\id\otimes \ev_D} 
    c\ok \phi_X(\ogH\cdot x).
\end{equation*}
Hence, the claim follows.
\end{proof}

In the following, we will often use the identification $\lv\lv X\otimes D \cong \lv\lv X$ and $D\otimes X\rv\rv\cong X\rv\rv$ because $D\cong\kk$ as a vector space. After these identifications, the Radford isomorphism (\ref{eq:radfordIsomorphism}) becomes $\fR_X(\phi_X(x)) = \phi_X(\ogH\cdot x)$. Its inverse is given by $\fR^{-1}_X(\phi_X(x)) = \phi_X(g_H\cdot x)$.


\subsubsection{Serre functor}\label{subsubsec:hopfSerre}
By Theorem~\ref{thm:NakSerre}, the relative Serre functor satisfies that $\dS \cong D\tr \dN$ as a left $\C$-module functor. Since, the Serre functor is unique up to isomorphism, we take the above as the definition of it. Then we get the following result.

\begin{theorem}
A relative Serre functor of $\Rep(A)$ is given by $\dS(M)={}_{\nu'}(M)$ where 
\begin{equation}\label{eq:nu'}
    \nu'(a)= \langle \alpha_H,S(a_{(-1)}) \rangle \nu(a_{(0)}).
\end{equation} 
The twisted left $\Rep(H)$-module structure $\fs^l_{X,M}: {}_{\nu'}(X\tr M) \xrightarrow{\sim} \lv\lv X\tr {}_{\nu'}M $ of $\dS$ is given by
\begin{equation}\label{eq:fsl}
    \fs(x\ok m) = \langle\lambda, a^i_{(0)} \rangle \phi_X(\gH S^{-1}(a^i_{(-1)})\cdot x) \ok \nu(b_i)\cdot m
\end{equation}  
\end{theorem}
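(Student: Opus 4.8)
The plan is to combine the two preceding results. By Theorem~\ref{thm:NakSerre}, we know $\dS_{\Rep(A)} \cong_{\C} D_{\Rep(H)} \tr \dN_{\Rep(A)}$ as a twisted $\C$-module functor, with module constraints assembled from $\fn^l_{X,M}$ (Theorem~\ref{thm:RepAtwistedLeft}) and the inverse Radford isomorphism $\fR^{-1}_X$ computed in Section~\ref{subsubsec:Radford}. First I would establish the formula for $\nu'$: by Lemma~\ref{lem:D}(b), $D_{\Rep(H)} = \kk$ with $H$-action $h \ostar c = \langle \alpha_H, S(h)\rangle c$, and by Theorem~\ref{thm:RepANak}, $\dN_{\Rep(A)}(M) = {}_{\nu}M$. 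The tensor product $D \tr {}_\nu M$ is $\kk \otimes_{\kk} M \cong M$ as a vector space, but the $A$-action is twisted: for $a \in A$, the action on $x \otimes m$ (with $x \in D = \kk$) uses the comodule structure $\rho(a) = a_{(-1)} \otimes a_{(0)}$, so $a$ acts by $a_{(-1)} \ostar x \otimes \nu(a_{(0)}) \cdot m = \langle \alpha_H, S(a_{(-1)})\rangle\, x \otimes \nu(a_{(0)}) \cdot m$. Identifying $\kk \otimes m$ with $m$, this says $a$ acts as $\langle \alpha_H, S(a_{(-1)})\rangle\, \nu(a_{(0)})$, which is exactly $\nu'(a)$ as defined in \eqref{eq:nu'}. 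Hence $\dS(M) = {}_{\nu'}M$, and one checks directly from \eqref{eq:alphaGrouplike} and the fact that $\nu$ is an algebra automorphism that $\nu'$ is an algebra automorphism of $A$, so ${}_{\nu'}M$ is a well-defined $A$-module.

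Next I would compute the twisted module structure $\fs^l_{X,M}$. By Theorem~\ref{thm:NakSerre}, the module constraint of $\dS = D \tr \dN$ is the composite
\begin{equation*}
    D \tr \dN(X \tr M) \xrightarrow{\id \tr \fn^l_{X,M}} D \tr (X\rv\rv \tr \dN(M)) \xrightarrow{\fR_X^{-1} \tr \id} \lv\lv X \tr (D \tr \dN(M)),
\end{equation*}
where I use the left-module versions of the maps. After the identifications $D \otimes (-) \cong (-)$ coming from $D \cong \kk$, the map $\fR_X^{-1}$ becomes $\phi_X(x) \mapsto \phi_X(\gH \cdot x)$, as recorded at the end of Section~\ref{subsubsec:Radford}. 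Plugging in the formula \eqref{eq:fnl} for $\fn^l_{X,M}$,
\begin{equation*}
    \fn^l_{X,M}(x \otimes m) = \langle \lambda_A, a^i_{(0)}\rangle\, \phi_X(S^{-1}(a^i_{(-1)}) \cdot x) \otimes \nu(b_i) \cdot m,
\end{equation*}
and postcomposing with $\fR^{-1}_X$ acting on the $\lv\lv X$-factor, i.e. applying $\gH \cdot (-)$ inside $\phi_X$, yields
\begin{equation*}
    x \otimes m \mapsto \langle \lambda_A, a^i_{(0)}\rangle\, \phi_X(\gH S^{-1}(a^i_{(-1)}) \cdot x) \otimes \nu(b_i) \cdot m,
\end{equation*}
which is precisely \eqref{eq:fsl}.

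The bookkeeping to watch is the order of composition and the handling of the identifications $D \cong \kk$: one must be careful that the Radford isomorphism $\fR_X$ from \eqref{eq:radfordIsomorphism} is $\phi_X(x) \otimes c \mapsto c \otimes \phi_X(\ogH \cdot x)$, so its \emph{inverse} introduces $\gH$ (not $\ogH$), and that this $\gH$ lands to the left of $S^{-1}(a^i_{(-1)})$ inside $\phi_X$, matching \eqref{eq:fsl}. I expect the main obstacle to be purely notational: reconciling the left-module conventions used here with the right-module formulas of Theorems~\ref{thm:RepAtwistedRight} and the sign/antipode conventions fixed in the Notation block, together with verifying that the composite map $\fs^l_{X,M}$ indeed satisfies the twisted $\C$-module compatibility axioms — but the latter is automatic since it is a composite of two maps (namely $\id \tr \fn^l$ and $\fR^{-1}_X \tr \id$) each of which is already known to be a module natural transformation by Theorem~\ref{thm:Nak}(a) and the naturality of $\fR$. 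No genuinely new input beyond Theorems~\ref{thm:NakSerre}, \ref{thm:RepANak}, \ref{thm:RepAtwistedLeft}, Lemma~\ref{lem:D} and the Radford formula \eqref{eq:radfordIsomorphism} is needed.
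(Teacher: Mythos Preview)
Your proposal is correct and follows exactly the paper's own approach: derive $\nu'$ from Lemma~\ref{lem:D}(b) and Theorem~\ref{thm:RepANak}, then obtain $\fs^l_{X,M}$ by composing $\fn^l_{X,M}$ from~\eqref{eq:fnl} with $\fR_X^{-1}$ from~\eqref{eq:radfordIsomorphism}, all packaged via Theorem~\ref{thm:NakSerre}. Your write-up is in fact more detailed than the paper's, which simply cites these ingredients without spelling out the computation.
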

\begin{proof}
The formula $(\ref{eq:nu'})$ follows from the formula of the Nakayama functor $(\ref{eq:nu})$ and the description of $D$ in Lemma~\ref{lem:D}(b). The formula $(\ref{eq:fsl})$ follows by composing the twisted left $\Rep(H)$-module structure $\fn^l_{X,M}$ of $\dN_{\Rep(A)}$ given in $(\ref{eq:fnl})$ and the inverse $\fR_X^{-1}$ of the Radford isomorphism $(\ref{eq:radfordIsomorphism})$.
\end{proof}

\begin{remark}
A different formula for the relative Serre functor was also provided in \cite{shimizu2019relative}. 
\end{remark}


\subsection{Unimodular structures on $\Rep(A)$}\label{subsec:hopfUnimodular}
In this section, we use the description of the Nakayama functor and the relative Serre functor of $\Rep(A)$ provided in Sections~\ref{subsubsec:hopfNakayama} and \ref{subsubsec:hopfSerre}, respectively, to describe the $\C$-module functor $\dS_{\Rep(A)}\dN_{\Rep(A)}$ in Section~\ref{subsubsec:hopfUniFunctor}. Using this, we characterize unimodular structures on $\Rep(A)$ in terms of unimodular elements of $A$ in Section~\ref{subsubsec:uniElement}. As the definition of unimodular elements is involved, in Section~\ref{subsubsec:grouplikeCoint} we discuss the simpler case when $A$ admits a grouplike cointegral. In this case, the definition of unimodular elements is much simpler. 

\subsubsection{Description of the functor $\dS_{\Rep(A)}\dN_{\Rep(A)}$}\label{subsubsec:hopfUniFunctor}

\begin{theorem}\label{thm:uniFunctor}
For $M\in \Rep(A)$, we have that $\dS \, \dN(M) = {}_{\tnu}M \in \Rep(A)$ where
\begin{equation}\label{eq:uniFunctor}
    \tnu(a)=\langle\alpha_H,S(a_{-1}) \rangle \nu^2(a_0).
\end{equation}  
The left $\C$-module structure of $\dS \, \dN$ is given by
\begin{equation}\label{eq:UniModuleStrH}
    \fd_{X,M}(x\tr m) = (\Im_H\cdot x)  \tr (\Im_L\cdot m), \; \text{where}
\end{equation}  
\begin{equation}\label{eq:NSmoduleStrucutre}
    \Im = \Im_H\otimes \Im_A:= \langle \lambda_A,a^i_0\rangle \langle \lambda_A, a^j_0\rangle \;g_H S^{-3}(a^j_{-1})S^{-1}(a^i_{-1})
    \ok
    \nu(b_j b_i)
    \in H\ok A.
\end{equation}
\end{theorem}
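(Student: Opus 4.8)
The plan is to compute the composite functor $\dS\,\dN$ and its $\C$-module structure by stringing together the explicit descriptions we already have, namely the formulas for $\dN_{\Rep(A)}$ (Theorem~\ref{thm:RepANak}, equation~(\ref{eq:nu})), its twisted module structure $\fn^l$ (equation~(\ref{eq:fnl})), the relative Serre functor $\dS_{\Rep(A)}$ (equation~(\ref{eq:nu'})), and its twisted module structure $\fs^l$ (equation~(\ref{eq:fsl})). On objects, $\dN(M) = {}_{\nu}M$, so $\dS\,\dN(M) = {}_{\nu'}({}_{\nu}M)$, which as an $A$-module is $M$ with action twisted by $\nu'\circ\nu$. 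First I would expand $\nu'(\nu(a))$: plug $\nu(a)$ into formula~(\ref{eq:nu'}), using that $\nu=\nu_A$ is a comodule map in an appropriate sense so that $(\nu(a))_{(-1)}\otimes(\nu(a))_{(0)}$ relates to $a_{(-1)}\otimes\nu(a_{(0)})$; this requires knowing how the Nakayama automorphism interacts with the comodule structure, which should follow from the formula for $\nu_A$ coming from its Frobenius system together with $H$-comodule compatibility. The upshot should be $\tnu(a) = \langle\alpha_H, S(a_{(-1)})\rangle\,\nu^2(a_{(0)})$ as claimed in~(\ref{eq:uniFunctor}).

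Next, for the module structure, I would use that $\dS\,\dN$ carries the module constraint $\fd_{X,M} = \fs^l_{X\rv\rv, \dN(M)}\circ \dS(\fn^l_{X,M})$, exactly as displayed just before Definition~\ref{defn:uniModCat}, except now with the twisted (left) versions $\fn^l$ and $\fs^l$ that were computed in Section~\ref{sec:comoduleAlg}. So the task is to substitute~(\ref{eq:fnl}) and~(\ref{eq:fsl}) into this composite. Applying $\fn^l_{X,M}$ to $x\otimes m$ produces a sum $\langle\lambda_A, a^i_{(0)}\rangle\,\phi_X(S^{-1}(a^i_{(-1)})\cdot x)\otimes \nu(b_i)\cdot m$; then applying $\dS(-)$ does nothing to the underlying vector space; then applying $\fs^l$ at the object $X\rv\rv$ (whose double-left-dual identification introduces the $S^{-2}$ appearing in the action on $X\rv\rv$, per the Notation block) together with the extra $g_H$-factor from the Radford twist produces a second sum indexed by $j$. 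Collecting the $H$-part and the $A$-part of the two accumulated Frobenius-system elements should yield $\Im_H = \langle\lambda_A, a^i_{(0)}\rangle\langle\lambda_A, a^j_{(0)}\rangle\, g_H S^{-3}(a^j_{(-1)})S^{-1}(a^i_{(-1)})$ and $\Im_A = \nu(b_j b_i)$, matching~(\ref{eq:NSmoduleStrucutre}); I would double-check the placement of $S^{-3}$ versus $S^{-1}$ by tracking which antipode power acts on $X$ in $\fs^l$ evaluated at $X\rv\rv$.

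The main obstacle I anticipate is bookkeeping the interaction between the Sweedler indices of the two Frobenius systems and the comodule coactions: in the second step one is applying $\fs^l$ at the object $X\rv\rv$, so the coaction $a^i_{(-1)}\otimes a^i_{(0)}$ from the first application gets further acted on, and one must carefully use the Frobenius-system identities~(\ref{eq:FrobSystem}) (in particular $a^i c\otimes b_i = a^i\otimes c b_i$ and $\nu(c)a^i\otimes b_i = a^i\otimes b_i c$) to merge $\nu(b_i)$ with the action of the second system $a^j, b_j$ and to get the product $\nu(b_j b_i)$ to come out cleanly rather than as $\nu(b_j)\nu(b_i)$ — these agree since $\nu$ is an algebra map, but the order matters. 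A secondary subtlety is confirming that the resulting $\fd_{X,M}$ genuinely satisfies the module-functor coherence, i.e.\ that it really is the constraint of $\dS\,\dN$ and not merely an isomorphism of functors; this is automatic from functoriality of composing module structures, so I would only remark on it. Finally, I would note that $\Im \in H\otimes A$ is independent of the choice of Frobenius system for $A$, which is needed for the formula to be well-posed; this follows because $\fn^l, \fs^l$ themselves are intrinsically defined and only the intermediate expressions depend on the system.
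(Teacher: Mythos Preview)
Your overall plan matches the paper's proof: compute $\dS\,\dN$ on objects by composing the two twists, and compute $\fd_{X,M}$ as the displayed composite $\fs^l_{X\rv\rv,\dN(M)}\circ\dS(\fn^l_{X,M})$, substituting (\ref{eq:fnl}) and (\ref{eq:fsl}) and simplifying. For the module-structure part your outline is exactly what the paper does, including the use of the $S^{-2}$-action on $X\rv\rv$ to produce the $S^{-3}$ factor, the fact that $g_H$ is grouplike, and $\nu(b_j)\nu(b_i)=\nu(b_jb_i)$.

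There is one genuine slip in the first step. You write that ${}_{\nu'}({}_{\nu}M)$ is $M$ with action twisted by $\nu'\circ\nu$, and then propose to compute $\nu'(\nu(a))$. The composition goes the other way: in ${}_{\nu}M$ the element $a$ acts as $\nu(a)$, so in ${}_{\nu'}({}_{\nu}M)$ the element $a$ acts as $\nu(\nu'(a))$; that is, ${}_{\nu'}({}_{\nu}M)={}_{\nu\circ\nu'}M$. With the correct order the computation is immediate,
\[
\nu(\nu'(a))=\nu\big(\langle\alpha_H,S(a_{(-1)})\rangle\,\nu(a_{(0)})\big)=\langle\alpha_H,S(a_{(-1)})\rangle\,\nu^2(a_{(0)})=\tnu(a),
\]
and you never need to know how $\nu_A$ interacts with the $H$-coaction. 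Your proposed route via $\nu'(\nu(a))$ would force you to compute $(\nu(a))_{(-1)}\otimes(\nu(a))_{(0)}$, which is not available in general from the data at hand and is precisely the ``obstacle'' you flagged; it is an artifact of the reversed order, not an actual difficulty in the proof. Fix the order and this part becomes the one-line observation the paper makes.
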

\begin{proof}
By the descriptions of the Nakayama functor (\ref{eq:nu}) and the relative Serre functor (\ref{eq:nu'}), it is clear that as a functor, $\dS\,\dN$ is given by (\ref{eq:uniFunctor}). Further, using the the twisted left $\C$-module structures of the relative Serre functor (\ref{eq:fsl}) and the Nakayama functor (\ref{eq:fnl}) , we get that, $\dS \, \dN: \M\rightarrow \M$ is a left $\C$-module functor with the left $\C$-module structure $\fd_{X,M}$ given by the following composition.   
\begin{equation*}
    \fd_{X,M}\,:\,\dS\dN(X\tr M) \xrightarrow{\dS(\fn_{X,M})}  \dS( X\rv\rv \tr \dN(M) ) \xrightarrow{\fs_{X\rv\rv, \dN(M)}}  \lv\lv X\rv\rv \tr\dS\dN(M) = X\tr \dS\dN(M).
\end{equation*}      
The following calculation yields an explicit description of the left $\C$-module structure of $\dS\,\dN$.
\begin{align*}
    x\ok  m  \;\;\;\;\xmapsto{\dS(\fn_{X,M})} \;\;
    & \langle \lambda,a^i_0\rangle \phi_X(S^{-1}(a^i_{-1})\cdot x) \ok \nu(b_i)\cdot m  
    \\[-0.12em]
    \xmapsto{\fs_{X\rv\rv, \dN(M)}} 
    & \langle \lambda_A,a^j_0\rangle \langle \lambda_A,a^i_0\rangle \phi_{X\rv\rv}(g_H S^{-1}(a^j_{-1})\cdot \phi_X(S^{-1}(a^i_{-1})\cdot x)) \ok \nu(b_j)\nu(b_i)\cdot m  
    \\[-0.12em]
    \stackrel{(\spadesuit)}{=} \;\;\;\;\;\; & \langle \lambda_A,a^j_0\rangle \langle \lambda_A,a^i_0\rangle
    \phi_{X\rv\rv}\circ \phi_X (S^{-2}(g_H S^{-1}(a^j_{-1}))S^{-1}(a^i_{-1})\cdot x) 
    \ok \nu(b_j)\nu(b_i)\cdot m 
    \\[-0.12em]
    \stackrel{(\diamondsuit)}{=} \;\;\;\;\;\; & 
    \langle \lambda_A,a^j_0\rangle \langle \lambda_A,a^i_0\rangle
    S^{-2}(g_H S^{-1}(a^j_{-1}))S^{-1}(a^i_{-1})\cdot x
    \ok \nu(b_j b_i)\cdot m 
    \\[-0.12em]
    \stackrel{(\heartsuit)}{=} \;\;\;\;\;\; & 
    \langle \lambda_A,a^j_0\rangle \langle \lambda_A,a^i_0\rangle
    g_H S^{-3}( a^j_{-1}) S^{-1}(a^i_{-1})\cdot x
    \ok \nu(b_j b_i)\cdot m 
\end{align*}
Here $(\lambda_A,\{a^i\},\{b_i\})$ and $(\lambda_A,\{a^j\}, \{b_j\})$ are Frobenius systems of $A$.
The equality $(\spadesuit)$ holds because the left action of $H$ on $ X\rv\rv$ is given $h\cdot \phi_X(x) = \phi_X(S^{-2}(h)\cdot x)$. The equality $(\diamondsuit)$ holds because we identify $X$ and $\lv\lv X\rv\rv$ via the map $\phi_{X\rv\rv}\circ \phi_X$ and  $\nu$ is an algebra map. Lastly, $(\heartsuit)$ holds because $g_H$ is grouplike. 
From the above computation, it follows that the left $\C$-module structure of $\dS\;\dN$ is as described by equations (\ref{eq:UniModuleStrH}) and (\ref{eq:NSmoduleStrucutre}).
\end{proof}

\subsubsection{Unimodular elements in exact $H$-comodule algebras}\label{subsubsec:uniElement}
From Definition~\ref{defn:uniModCat}, recall that $\Rep(A)$ is a unimodular $\Rep(H)$-module category if and only if there exists a $\C$-module natural isomorphism $\fu: \id_{\M} \rightarrow \dS\;\dN$. Next, we will use Theorem~\ref{thm:uniFunctor}, to characterize such natural isomorphisms using certain invertible elements of the algebra $A$. To this end, consider the following notion.

\begin{definition}\label{defn:uniElement}
Let $A$ be an exact, left $H$-comodule algebra. A \textit{unimodular element} of $A$ is an invertible element $\tg\in A$ satisfying the following two relations:
\begin{align}
    \tg a\tg^{-1} &= \tnu(a) \;\;\;\;\; (\forall \hspace{.1cm} a\in A) \label{eq:uniElement1}\\
    1_H \otimes \tg &= \Im\cdot \delta(\tg) \label{eq:uniElement2}
\end{align}
Here $\delta$ is comodule structure of $A$. For the definition of $\tnu$ and $\Im$, see (\ref{eq:uniFunctor}) and (\ref{eq:UniModuleStrH}), respectively. 
\end{definition}

Using this, we obtain the following result.

\begin{theorem}\label{thm:uniclassification}
Let $A$ be an exact, left $H$-comodule algebra. Then, unimodular structures on the $\Rep(H)$-module category $\Rep(A)$ are in bijection with unimodular elements of $A$.
\end{theorem}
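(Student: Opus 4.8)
The plan is to establish the bijection by unwinding what it means for a natural transformation $\fu\colon\id_{\M}\to\dS\,\dN$ to be a $\C$-module isomorphism, using the explicit description of $\dS\,\dN$ from Theorem~\ref{thm:uniFunctor}. First I would recall that, for $\M=\Rep(A)$, a natural transformation between two additive endofunctors of the form ${}_{\phi}(-)$ and ${}_{\psi}(-)$ (i.e.\ restriction-of-scalars functors along algebra automorphisms $\phi,\psi$ of $A$) is determined by its component at the regular module $A$: indeed, by Yoneda/density, $\uNat(\id_{\M},\dS\dN)\cong\Hom_A(A,{}_{\tnu}A)$, and since $\dS\dN={}_{\tnu}(-)$, a right $A$-module endomorphism $A\to A$ is right multiplication by an element $\tg\in A$. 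The condition that $\fu_A\colon A\to{}_{\tnu}A$, $a\mapsto a\tg$, be a morphism of \emph{left} $A$-modules — which is what makes $\fu$ a natural transformation of the functors on all of $\Rep(A)$, not just a map of underlying vector spaces — translates precisely into $\tg a\,\tg^{-1}=\tnu(a)$ for all $a\in A$, i.e.\ relation~(\ref{eq:uniElement1}); and $\fu$ is an isomorphism of functors iff $\fu_A$ is invertible iff $\tg$ is an invertible element of $A$. So the first half of the argument sets up the correspondence $\fu\leftrightarrow\tg$ between natural isomorphisms $\id_{\M}\cong\dS\dN$ and invertible elements $\tg\in A$ with $\tg a\tg^{-1}=\tnu(a)$.

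The second, and more substantial, half is to show that the $\C$-module compatibility condition~(\ref{eq:unimodularModule}) for $\fu$ is equivalent to relation~(\ref{eq:uniElement2}), $1_H\otimes\tg=\Im\cdot\delta(\tg)$. Here I would plug the module constraints into the square in Definition~\ref{defn:uniModCat}: the bottom-right path is $\fs_{X\rv\rv,\dN(M)}\circ\dS(\fn_{X,M})=\fd_{X,M}$, whose explicit form $\fd_{X,M}(x\tr m)=(\Im_H\cdot x)\tr(\Im_A\cdot m)$ is exactly (\ref{eq:UniModuleStrH})--(\ref{eq:NSmoduleStrucutre}); the top path is $(\id_X\tr\fu_M)\circ\fu_{X\tr M}$. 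Evaluating both composites on a generic element $x\otimes m\in X\tr M=X\otimes M$ and using $\fu_N(n)=n\cdot\tg$ reduces the commutativity of the square to an identity in $X\otimes M$ that must hold for all $X\in\Rep(H)$, $M\in\Rep(A)$. Choosing $X=H$ (regular comodule) and $M=A$ and evaluating at $x=1_H$, $m=1_A$ — together with the fact that the $A$-action on $X\tr M$ is the diagonal one $a\cdot(x\otimes m)=a_{(-1)}x\otimes a_{(0)}m$ and that $\delta(\tg)=\tg_{(-1)}\otimes\tg_{(0)}$ — collapses the square exactly to $1_H\otimes\tg=\Im\cdot\delta(\tg)$, where $\Im\cdot\delta(\tg)$ means $(\Im_H\,\tg_{(-1)})\otimes(\Im_A\,\tg_{(0)})$; conversely, this single relation at $(H,A)$ propagates back to the general square by naturality in $X$ and $M$ and the universal (generator) property of the regular modules. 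Thus $\fu$ is a unimodular structure iff $\tg$ is a unimodular element, giving the desired bijection.

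The main obstacle I anticipate is the bookkeeping in the second half: correctly identifying which leg of the pentagon-type diagram (\ref{eq:unimodularModule}) gets contracted against which, and verifying that the reduction to $X=H$, $M=A$, $x=1_H$, $m=1_A$ genuinely loses no information — i.e.\ that naturality plus these universal objects recovers the full compatibility condition. One must be careful that $\fu_{X\tr M}$ is computed as right multiplication by $\tg$ on the $A$-module $X\tr M$, and that this is compatible with the diagonal $A$-action only because of relation~(\ref{eq:uniElement1}); so the two relations are genuinely intertwined and should be handled in the right order (first (\ref{eq:uniElement1}) to make $\fu$ a natural transformation of $\C$-\emph{linear} — i.e.\ well-defined — functors, then (\ref{eq:uniElement2}) for the module-functor structure). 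A minor additional point is checking that the assignment $\tg\mapsto\fu$ and its inverse are mutually inverse as claimed, which follows immediately once the component-at-$A$ description is in hand. Everything else is routine Sweedler-notation manipulation using (\ref{eq:FrobSystem}), the grouplikeness of $g_H$, and the formula (\ref{eq:uniFunctor}) for $\tnu$.
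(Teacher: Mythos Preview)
Your overall strategy matches the paper's proof exactly: identify a natural transformation $\fu:\id_{\M}\Rightarrow\dS\dN={}_{\tnu}(-)$ with the element $\tg:=\fu_A(1_A)$, show that the left $A$-linearity of $\fu$ is equivalent to \eqref{eq:uniElement1}, and show that the $\C$-module compatibility square \eqref{eq:unimodularModule} is equivalent to \eqref{eq:uniElement2}. The reduction to $X=H$, $M=A$ that you sketch for the second part is exactly the right idea and is implicit in the paper's argument.

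However, there is a genuine sidedness error running through your write-up. In $\Rep(A)$ (left $A$-modules), naturality forces $\fu_M(m)=\tg\cdot m$, \emph{left} multiplication by $\tg$; the formula $\fu_N(n)=n\cdot\tg$ you use has no meaning for a general left $A$-module $N$. With your formula $\fu_A(a)=a\tg$, the condition that $\fu_A:A\to{}_{\tnu}A$ be a left $A$-module map reads $(ba)\tg=\tnu(b)\,a\tg$ for all $a,b$, hence $\tnu=\id_A$ --- not \eqref{eq:uniElement1}. With the correct formula $\fu_A(a)=\tg a$, left $A$-linearity gives $\tg a=\tnu(a)\tg$, which is precisely \eqref{eq:uniElement1}. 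Relatedly, in your reading of diagram \eqref{eq:unimodularModule} the top path is just $\id_X\tr\fu_M$, not $(\id_X\tr\fu_M)\circ\fu_{X\tr M}$; the map $\fu_{X\tr M}$ belongs to the other path. Once you use $\fu_M(m)=\tg\cdot m$ and the diagonal $A$-action on $X\tr M$, the square becomes
\[
\Im_H\,\tg_{(-1)}\cdot x\;\otimes\;\Im_A\,\tg_{(0)}\cdot m \;=\; x\otimes \tg\cdot m,
\]
which at $(X,M,x,m)=(H,A,1_H,1_A)$ is exactly \eqref{eq:uniElement2}, and conversely \eqref{eq:uniElement2} immediately implies the general identity. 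Fixing the left/right issue, your proof is the paper's proof.
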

\begin{proof}
Suppose that we have a unimodular structure $\fu:\id_{\M}\, \rightarrow\,  \dS\,\dN \stackrel{(\textnormal{\ref{eq:uniFunctor}})} {=}{}_{\tnu}(-)$. Then, we get the element  $\tg = \fu_A(1_A)\in A$. By naturality of $\fu$, it follows that $\fu_X(x)=\tg\cdot x$ for all $x\in X$ and $X\in \Rep(A)$. As $\fu_A$ is an isomorphism, $\tg$ is an invertible element in $A$.
\begin{itemize}
    \item As $\fu_A = \tg\cdot (-):A\rightarrow {}_{\tnu}A$ is a map of left $A$-modules, condition (\ref{eq:uniElement1}) is satisfied. 
    \item As $\fu$ is a $\C$-module natural isomorphism, the diagram (\ref{eq:unimodularModule}) commutes. Using $\fu_X(x) = \tg\cdot x$ and that the $\C$-module structure of $\dS\,\dN$ is given by $\Im$ (\ref{eq:UniModuleStrH}), we get that (\ref{eq:uniElement2}) is satisfied.
\end{itemize}
Thus, $\tg$ is a unimodular element of $A$. 

Conversely, given a unimodular element $\tg$ of $A$, we define the natural isomorphism
\[\fu =\{ \fu_M:M\rightarrow \dS\;\dN(M), \hspace{0.25cm} m\mapsto \tg\cdot m \}_{M\in\Rep(A)}.\]
Then, repeating the above arguments backwards, we get that $\fu=\{ \fu_M \}$ is a unimodular structure on $\Rep(A)$. 
\end{proof}


\subsubsection{Grouplike cointegrals on comodule algebras}\label{subsubsec:grouplikeCoint}
Consider the following notion. 

\begin{definition}\cite{kasprzak2018generalized}\label{defn:grouplike}
Let $H$ be a Hopf algebra and $A$ a left $H$-comodule algebra. A \textit{grouplike cointegral} on $A$ is a pair $(g,\lambda)$ consisting of a grouplike element $g\in H$ and a linear form $\lambda:A\rightarrow \kk$ such that the equation 
\begin{equation}\label{eq:grouplike}
    a_{(-1)} \langle \lambda,a_{(0)} \rangle = \langle \lambda,a \rangle g
\end{equation}
holds for all elements $a\in A$. In this situation, $\lambda$ is called a \textit{$g$-cointegral} on $A$. If $\lambda$ is a Frobenius form on $A$, the $g$-cointegral $\lambda$ is called \textit{non-degenerate}.
\end{definition}

Next, we see the the $\C$-module structure of the functor $\dS\,\dN$ simplifies when we have a grouplike cointegral on $A$.
\begin{theorem}
Recall the element $\Im$ (\ref{eq:NSmoduleStrucutre}) and the left $\C$-module structure of $D_{\Rex_{\C}(\M)}=\dS\;\dN$ (\ref{eq:UniModuleStrH}). 
If the Frobenius form $\lambda_A$ of $A$ is a $g_A$-grouplike integral for some grouplike element $g_A\in H$, then we have that
\begin{equation}
    \Im =  g_A^{-2} g_H \otimes 1_A.
\end{equation}
Thus, for $X\in \C$ and $M\in \M$, the left $\C$-module structure of $\dS\,\dN$ given by:
\[ \dS\;\dN(X\tr M) \rightarrow X\tr \dS\;\dN(M), \hspace{1cm} x\tr m  \mapsto  \phi_X( g^{-2}_A g_H \cdot x )\tr m. \]
\end{theorem}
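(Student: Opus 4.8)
The plan is to substitute the grouplike-cointegral identity into the formula (\ref{eq:NSmoduleStrucutre}) for $\Im$ and collapse the two Frobenius-system sums appearing there. Set $\psi := (\mathrm{id}_H\otimes\lambda_A)\circ\rho : A\to H$, so that $\psi(a) = a_{(-1)}\langle\lambda_A,a_{(0)}\rangle$; by definition (\ref{eq:grouplike}), the hypothesis that $\lambda_A$ is a $g_A$-cointegral is precisely the statement $\psi(a) = \langle\lambda_A,a\rangle\,g_A$ for all $a\in A$. The one real computation needed is the collapsing identity
\[
\sum_i \langle\lambda_A,a^i_{(0)}\rangle\,a^i_{(-1)}\otimes b_i \;=\; \sum_i \psi(a^i)\otimes b_i \;=\; g_A\otimes \sum_i\langle\lambda_A,a^i\rangle\,b_i \;=\; g_A\otimes 1_A,
\]
where the final equality is the relation $\langle\lambda_A,a^i\rangle\,b_i = 1_A$ from (\ref{eq:FrobSystem}). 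Applying $S^{-k}$ in the first tensor slot (using that $g_A$ is grouplike, so $S^{-1}(g_A)=S^{-3}(g_A)=g_A^{-1}$) and the Nakayama automorphism $\nu$ of $A$ in the second slot (which fixes $1_A$) gives the twisted versions
\[
\sum_i \langle\lambda_A,a^i_{(0)}\rangle\,S^{-1}(a^i_{(-1)})\otimes\nu(b_i) \;=\; g_A^{-1}\otimes 1_A, \qquad
\sum_j \langle\lambda_A,a^j_{(0)}\rangle\,S^{-3}(a^j_{(-1)})\otimes\nu(b_j) \;=\; g_A^{-1}\otimes 1_A.
\]

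With these in hand I would evaluate (\ref{eq:NSmoduleStrucutre}) by first carrying out the sum over the index $i$: in $\Im = \langle\lambda_A,a^i_{(0)}\rangle\langle\lambda_A,a^j_{(0)}\rangle\,g_H S^{-3}(a^j_{(-1)})S^{-1}(a^i_{(-1)})\otimes\nu(b_j b_i)$ the $i$-dependent factors are the rightmost factor $S^{-1}(a^i_{(-1)})$ of the $H$-component, the rightmost factor $\nu(b_i)$ of the $A$-component (recall $\nu(b_j b_i)=\nu(b_j)\nu(b_i)$), and the scalar $\langle\lambda_A,a^i_{(0)}\rangle$, so by the first twisted identity this sum collapses, contributing $g_A^{-1}$ to the $H$-slot and $1_A$ to the $A$-slot. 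What remains is $\sum_j\langle\lambda_A,a^j_{(0)}\rangle\,g_H S^{-3}(a^j_{(-1)})g_A^{-1}\otimes\nu(b_j)$; carrying out the $j$-sum with the second twisted identity (the $j$-dependent factor $S^{-3}(a^j_{(-1)})$ now sitting between $g_H$ and $g_A^{-1}$) collapses this to $g_A^{-2}g_H\otimes 1_A$, which is the claimed value of $\Im$. Substituting $\Im = g_A^{-2}g_H\otimes 1_A$ into (\ref{eq:UniModuleStrH}) then gives at once $\fd_{X,M}(x\tr m) = \phi_X(g_A^{-2}g_H\cdot x)\tr m$, the factor $\phi_X$ recording the identification $\lv\lv X\rv\rv\cong X$ already used in the proof of Theorem~\ref{thm:uniFunctor}.

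The argument is essentially all bookkeeping; the sole genuine ingredient is the collapsing identity, which is where the grouplike-cointegral hypothesis is actually used. The points that require care are keeping straight which of the two Frobenius systems and which power of the antipode is attached to which summation index when the double sum is collapsed, and pinning down the order in which the grouplike elements $g_H$ and $g_A$ appear in the $H$-component of $\Im$ (performing the $j$-sum before the $i$-sum provides a useful cross-check here).
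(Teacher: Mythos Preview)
Your approach is essentially the same as the paper's: both substitute the grouplike-cointegral identity \eqref{eq:grouplike} into \eqref{eq:NSmoduleStrucutre}, use $\langle\lambda_A,a^i\rangle b_i = 1_A$ from \eqref{eq:FrobSystem} to collapse the Frobenius sums, and then simplify $S^{-1}(g_A)=S^{-3}(g_A)=g_A^{-1}$. The paper carries out both sums simultaneously while you do them one at a time, but this is only a cosmetic difference.

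There is one small point you should not leave implicit. After both collapses your $H$-component reads $g_H\,g_A^{-1}\,g_A^{-1}$, not $g_A^{-2}g_H$; your suggested cross-check of swapping the order of the $i$- and $j$-sums still produces $g_H g_A^{-2}$ and does not resolve this. The paper closes this gap with an explicit step: the distinguished grouplike element $g_H$ commutes with every grouplike element of $H$ (a consequence of Radford's $S^4$ formula applied to grouplikes), so $g_H g_A^{-2}=g_A^{-2}g_H$. You flag the ordering as ``requiring care'' but never actually supply this justification, so make sure to include it.
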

\begin{proof}
Observe that 
\begin{align*}
\Im & \stackrel{(\textnormal{\ref{eq:NSmoduleStrucutre}})}{=} \langle \lambda_A,a^i_0\rangle \langle \lambda_A, a^j_0\rangle \;g_H S^{-3}(a^j_{-1})S^{-1}(a^i_{-1})
\ok
\nu(b_j b_i)
\\[-0.12cm]
& \stackrel{(\textnormal{\ref{eq:grouplike}})}{=} 
\langle \lambda_A,a^i\rangle \langle \lambda_A, a^j\rangle \;g_H S^{-3}(g_A)S^{-1}(g_A)
 \ok
 \nu(b_j b_i)
\\[-0.12cm]
\;\; & \; \stackrel{(\textnormal{\ref{eq:FrobSystem}})}{=}  
g_H S^{-3}(g_A)S^{-1}(g_A)
\ok
\nu(1_A)   \\[-0.12cm]
& \; \, \stackrel{(\spadesuit)}{=}  g_H g^{-1}_A g^{-1}_A \otimes 1_A \\[-0.12cm]
& \;\,\stackrel{(\diamondsuit)}{=}  g^{-2}_A g_H \otimes 1_A.
\end{align*}
Here, the equality $(\spadesuit)$ holds because $g_A$ is grouplike element of $H$ and $\nu$ is an algebra map. The equality $(\diamondsuit)$ holds because $g_H$ commutes with all grouplike elements of $H$. 
\end{proof}

When the Frobenius form on the $H$-comodule algebra under consideration is a grouplike cointegral, Theorem~\ref{thm:uniclassification} simplifies a lot and we recover \cite[Corollary~7.10]{shimizu2022Nakayama}.

\begin{corollary}\label{cor:grouplikecointegral}
If the Frobenius form $\lambda_A$ of $A$ is a $g_A$-grouplike cointegral for some $g_A\in H$, then, the unimodular structures on $\Rep(A)$ are in bijection with invertible elements $\tg\in A$ satisfying:
\begin{equation}\label{eq:uniElementGrouplike}
    \tg a\tg^{-1}=\tnu(a)=\langle\alpha_H,S(a_{-1}) \rangle \nu^2(a_0), \hspace{1cm} g_H^{-1} g_A^2\otimes\tg =\delta(\tg) \;\;\;\;\;\;\;\;\;\;\;\;\; \forall \; a\in A.
\end{equation}
\qed
\end{corollary}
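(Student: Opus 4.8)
The plan is to derive Corollary~\ref{cor:grouplikecointegral} as a direct specialization of Theorem~\ref{thm:uniclassification} using the preceding computation of $\Im$ in the grouplike-cointegral case. By Theorem~\ref{thm:uniclassification}, unimodular structures on $\Rep(A)$ are in bijection with unimodular elements of $A$, i.e.~invertible $\tg \in A$ satisfying \eqref{eq:uniElement1} and \eqref{eq:uniElement2}. The first relation \eqref{eq:uniElement1}, namely $\tg a \tg^{-1} = \tnu(a)$, is unchanged, and since $\tnu(a) = \langle \alpha_H, S(a_{-1})\rangle \nu^2(a_0)$ by \eqref{eq:uniFunctor}, this immediately yields the first equation of \eqref{eq:uniElementGrouplike}. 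So the only work is to simplify the second relation \eqref{eq:uniElement2}, $1_H \otimes \tg = \Im \cdot \delta(\tg)$, under the grouplike hypothesis.

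The key step is to invoke the theorem just proved (the one computing $\Im = g_A^{-2} g_H \otimes 1_A$ when $\lambda_A$ is a $g_A$-cointegral). Substituting this into \eqref{eq:uniElement2} gives
\[
1_H \otimes \tg = (g_A^{-2} g_H \otimes 1_A)\cdot \delta(\tg) = (g_A^{-2} g_H \otimes 1_A)(\tg_{(-1)} \otimes \tg_{(0)}) = g_A^{-2} g_H\, \tg_{(-1)} \otimes \tg_{(0)}.
\]
Since $g_A^{-2} g_H$ is an invertible (indeed grouplike) element of $H$, this equation is equivalent to $g_H^{-1} g_A^2 \otimes \tg = \tg_{(-1)} \otimes \tg_{(0)} = \delta(\tg)$, which is precisely the second equation of \eqref{eq:uniElementGrouplike}. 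I should also note that $g_H$ is central among grouplike elements, so $g_A^{-2}g_H = g_H g_A^{-2} = g_H^{-1}\cdot(g_H^2 g_A^{-2})$ — but actually the cleanest route is just to left-multiply the first tensor factor by the inverse $(g_A^{-2}g_H)^{-1} = g_H^{-1} g_A^{2}$, which is legitimate since tensoring with an invertible element on one factor is a bijection of $H \otimes A$.

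Putting these two simplifications together: invertible $\tg \in A$ is a unimodular element precisely when it satisfies $\tg a \tg^{-1} = \langle \alpha_H, S(a_{-1})\rangle \nu^2(a_0)$ for all $a \in A$ and $g_H^{-1} g_A^2 \otimes \tg = \delta(\tg)$, which is the assertion of the corollary. I would end with a one-line remark that this recovers \cite[Corollary~7.10]{shimizu2022Nakayama}. I do not anticipate any genuine obstacle here; the only thing to be careful about is bookkeeping with the direction of comultiplication in Sweedler notation and confirming that the hypothesis "$\lambda_A$ is a Frobenius form" is already part of the ambient setup (it is, since $A$ is an exact comodule algebra, hence Frobenius, and we chose $\lambda_A$ as its Frobenius form), so that invoking the $\Im$-computation theorem is valid. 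Hence the proof is essentially a substitution, and I would present it as such.
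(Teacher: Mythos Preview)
Your proposal is correct and matches the paper's approach: the corollary is stated with a \qed and no proof, since it follows immediately from Theorem~\ref{thm:uniclassification} together with the preceding computation $\Im = g_A^{-2} g_H \otimes 1_A$. Your write-up simply makes this substitution explicit, which is exactly the intended (trivial) argument.
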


The category $\Vect=\Rep(\kk)$ is an exact left $H$-module category as $\kk$ is an exact left $H$-comodule algebra with the $H$-coaction given by $1\mapsto 1_H\otimes 1$. 
Then, by applying Corollary~\ref{cor:grouplikecointegral} to the $H$-comodule algebra $\kk$, we obtain $\kk$ admits a unimodular structure if and only if $g_H=1_H$, that is, $H^*$ is unimodular. This is consistent with our observation in Example~\ref{ex:uniModCat}(iii).


\subsection{Taft algebras example}\label{subsec:Taft}
In this section, we study the case of $\C=\Rep(H)$ for $H$ being the Taft algebra $T(\omega)$. Let $\kk$ be an algebraically closed field of characteristic $0$.
To define Taft algebras, fix an integer $N>1$ and a primitive $N$-th root of unity $\omega\in\kk$. Then $T(\omega)$ is defined as the $\kk$-algebra generated by $g$ and $x$ subject to the relations 
\begin{equation}\label{eq:TaftRelations}
    x^N=0, \;\; g^N=1 \;\;\; \text{and} \;\;\; gx= \omega xg.
\end{equation}
Equipped with the following comultiplication and antipode maps, $T(\omega)$ becomes a Hopf algebra. 
\begin{equation}\label{eq:TaftComult}
    \Delta(g) = g\otimes g ,\;\;\Delta(x)=x\otimes 1+ g\otimes x\;\;\; \text{and} \;\;\;  S(g)=g^{-1}, \;\; S(x) = -g^{-1}x.
\end{equation}
From this, we get the following information.

\begin{itemize}
    \item The element $\Lambda^l=\sum_{i=0}^{N-1} g^i x^{N-1}$ is a non-zero left integral of $T(\omega)$. The distinguished character of $T(\omega)$ is given by $\alpha_{T(\omega)}(g)=\omega$ and $\alpha_{T(\omega)}(x)=0$.
    
    \item The functional $\lambda_{T(\omega)}:T(\omega)\rightarrow \kk$ given by $\lambda_{T(\omega)}(x^rg^s) = \delta_{r,N-1} \delta_{s,0}$ for $r,s=0,\ldots,N-1$ is a right cointegral of $T(\omega)$. The distinguished grouplike element of $T(\omega)$ is given by $g_{T(\omega)}=g^{-1}$.
\end{itemize}

As $g_{T(\omega)}\neq 1_H$, we have that $T(\omega)^*$ is not unimodular. Thus, by Example~\ref{ex:uniModCat}, we have that $\Vect$ is not a unimodular $\Rep(T(\omega))$-module category. In fact, as we will see below, $\Rep(T(\omega))$ does not admit a unimodular module category.

Indecomposable, left, exact $T(\omega)$-comodule algebras (or equivalently, indecomposable, exact $\Rep(T(\omega))$-module categories) were classified by Mombelli \cite{mombelli2010module}. Shimizu \cite[\S5.1]{shimizu2019relative} showed that these comodule algebras admit grouplike cointegrals and described them explicitly. Using these results and Corollary~\ref{cor:grouplikecointegral}, we obtain the following result on non-existence of unimodular module categories.

\begin{theorem}\label{thm:Taft}
The Taft algebra $T(\omega)$ does not admit a unimodular comodule algebra. In other words, the category $\Rep(T(\omega))$ does not admit a unimodular module category. 
\end{theorem}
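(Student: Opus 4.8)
The plan is to combine the explicit classification of indecomposable, exact $T(\omega)$-comodule algebras due to Mombelli with Shimizu's description of their grouplike cointegrals, and then to show that for each such comodule algebra $A$ the simplified unimodularity criterion of Corollary~\ref{cor:grouplikecointegral} fails. Since by Lemma~\ref{lem:uniDirectSum} a general exact module category is unimodular if and only if each of its indecomposable summands is, it suffices to rule out unimodularity for every \emph{indecomposable} exact $T(\omega)$-comodule algebra $A$. For each such $A$ we know (from Shimizu's work, \cite[\S5.1]{shimizu2019relative}) that the Frobenius form $\lambda_A$ is a $g_A$-grouplike cointegral for an explicit grouplike element $g_A\in T(\omega)$, i.e.\ $g_A=g^{j}$ for some $j$ depending on the comodule algebra; so we may invoke Corollary~\ref{cor:grouplikecointegral}. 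Concretely, $A$ is unimodular precisely when there exists an invertible $\tg\in A$ satisfying the two relations in \eqref{eq:uniElementGrouplike}: the conjugation relation $\tg a\tg^{-1}=\tnu(a)$ for all $a\in A$, and the coaction relation $\delta(\tg)=g_H^{-1}g_A^2\otimes \tg$, where $g_H=g_{T(\omega)}=g^{-1}$ and $\tnu(a)=\langle\alpha_{T(\omega)},S(a_{(-1)})\rangle\,\nu^2(a_{(0)})$.

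The key steps, in order, are: (1) recall Mombelli's list: the indecomposable exact $T(\omega)$-comodule algebras are, up to equivalence, the algebras $T(\omega)$ itself and a family $\mathcal{A}(\text{data})$ of ``lower triangular'' comodule algebras together with certain $\kk_{\psi}[\mathbb{Z}/d]$-type algebras; in each case write down the $H$-coaction $\delta$ explicitly. (2) For the comodule algebra $A=T(\omega)$ itself, note that $T(\omega)$ is not unimodular as a tensor category (its distinguished invertible object is nontrivial since $g_{T(\omega)}=g^{-1}\neq 1$, equivalently $\alpha_{T(\omega)}\neq\varepsilon$), so by Example~\ref{ex:uniModCat}(i) the regular module category is not unimodular; this also follows directly from \eqref{eq:uniElementGrouplike}. (3) For each remaining $A$ in the list, compute $\tnu$ using the formula $\tnu(a)=\langle\alpha_{T(\omega)},S(a_{(-1)})\rangle\,\nu^2(a_{(0)})$ — here $\nu=\nu_A$ is the Nakayama automorphism of $A$ with respect to $\lambda_A$, which Shimizu also records — and then analyze the coaction constraint $\delta(\tg)=g^{-1}\cdot g_A^2\otimes\tg$. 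The strategy is: write a putative $\tg\in A$ in the natural basis of $A$; the coaction relation $\delta(\tg)=(g^{2j-1})\otimes\tg$ forces $\tg$ to be a \emph{$T(\omega)$-coinvariant-up-to-the-grouplike $g^{2j-1}$} element, which pins $\tg$ down to (a scalar multiple of) a single monomial; then check that this forced $\tg$ fails the conjugation relation $\tg a\tg^{-1}=\tnu(a)$, typically by testing it against the generator playing the role of $x$, where $\tnu$ scales by a nontrivial power of $\omega$ that no inner automorphism by a grouplike-like element can produce — or, dually, that no invertible element of $A$ implements $\tnu$ at all. One obtains a contradiction in every case, so no unimodular element exists, and by Theorem~\ref{thm:uniclassification} (via Corollary~\ref{cor:grouplikecointegral}) $\Rep(A)$ is not unimodular.

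The main obstacle is step (3): it is a finite but genuinely case-by-case verification across Mombelli's classification, and the combinatorics of matching the coaction constraint with the conjugation constraint is where all the real content sits. The cleanest way to organize it is probably to observe that the coaction relation forces $\tg$ to live in a specific one-dimensional ``graded piece'' of $A$ (the piece whose coaction is governed by the grouplike element $g^{2j-1}\in T(\omega)$), so that $\tg$ is essentially unique up to scalar; then a single computation of $\tg x_A \tg^{-1}$ versus $\tnu(x_A)$ — where $x_A$ is the image in $A$ of the nilpotent generator — exhibits the mismatch, because $\tnu$ involves $\nu^2$ (an \emph{even} power of the Nakayama automorphism) twisted by $\alpha_{T(\omega)}\circ S$, and one checks the resulting scalar is not $1$. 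A uniform bookkeeping lemma — ``if a grouplike-cointegral comodule algebra $A$ over $T(\omega)$ has $\alpha_{T(\omega)}\neq\varepsilon$ restricted appropriately, then the only candidate $\tg$ forced by the coaction fails the Nakayama twist'' — would let us dispatch all the cases at once; writing that lemma and verifying it against the explicit data of \cite{mombelli2010module} and \cite[\S5.1]{shimizu2019relative} is the crux of the argument.
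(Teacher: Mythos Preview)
Your overall strategy matches the paper's: reduce to Mombelli's list of indecomposable exact $T(\omega)$-comodule algebras, use Shimizu's grouplike cointegrals to invoke Corollary~\ref{cor:grouplikecointegral}, and then check case by case that no unimodular element exists. The paper's list is precisely the two families $A_0(d)$ and $A_1(d,\xi)$ with coactions as in \eqref{eq:TaftCoaction}; your vague ``$T(\omega)$ itself plus a family $\mathcal{A}(\text{data})$ plus $\kk_\psi[\mathbb{Z}/d]$-type algebras'' should be replaced by this.

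Where you differ from the paper is in the tactic for step~(3). For the family $A_1(d,\xi)$ the paper uses only the \emph{first} condition of \eqref{eq:uniElementGrouplike}: it shows directly that $\tnu$ is not an inner automorphism of $A_1(d,\xi)$, by exhibiting either an algebra map $\epsilon:A_1(d,\xi)\to\kk$ with $\epsilon\circ\tnu\neq\epsilon$, or a simple module $V$ with $V\not\cong{}_{\tnu}V$. This avoids ever having to analyse the coaction constraint on a general element. Your proposed route---use the coaction constraint $\delta(\tg)=g_H^{-1}g_A^2\otimes\tg$ first to pin $\tg$ down, then test conjugation---also works, and in fact does force $\tg$ to a single monomial: since $\delta(X)=x\otimes 1+g\otimes X$, the non-grouplike $x$-terms in $\delta(X^rG^s)$ (whose Gaussian binomial coefficients are nonzero for $r<N$) cannot appear in $g^{-1}\otimes\tg$, so any nonzero coefficient in $\tg$ must have $r=0$; then $ms\equiv -1\pmod N$ forces $m=1$, $d=N$, and $\tg\propto G^{-1}$, after which conjugation against $G$ fails. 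So your approach is legitimate and arguably more uniform, but you should carry out this argument rather than gesture at a ``uniform bookkeeping lemma.''

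Two small corrections. In step~(2) you write that $\Rep(T(\omega))$ is not unimodular ``since $g_{T(\omega)}=g^{-1}\neq 1$, equivalently $\alpha_{T(\omega)}\neq\varepsilon$''; these are \emph{not} equivalent. The condition $g_H\neq 1$ is equivalent to $H^*$ being non-unimodular (Example~\ref{ex:uniModCat}(iii)), whereas non-unimodularity of $\Rep(H)$ is $\alpha_H\neq\varepsilon$ (Lemma~\ref{lem:D}); for the Taft algebra both happen to hold, but only the second is the correct justification. Also, the separate treatment of $A=T(\omega)$ in step~(2) is redundant: $T(\omega)$ is $A_1(N,0)$ in Mombelli's list and is handled by step~(3).
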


\begin{proof}
Choose a divisor $d|N$ and an element $\xi\in\kk$. Set $m=N/d$ and consider the following algebras:
\begin{itemize}
    \item[\upshape{(a)}] $A_0(d) = \kk \langle G|G^d=1 \rangle.$
    \item[\upshape{(b)}] $A_1(d,\xi) = \kk \langle G,X|G^d=1, X^N=\xi , GX=\omega^m XG\rangle.$
\end{itemize}
They are $T(\omega)$-comodule algebras with the coaction determined by 
\begin{equation}\label{eq:TaftCoaction}
    \delta(G)= g^m\otimes G \;\;, \delta(X)=x\otimes 1 + g\otimes X.
\end{equation}
By \cite[Proposition~8.3]{mombelli2010module}, every indecomposable exact left module category over $\Rep(T(\omega))$ is equivalent to $\Rep(A)$ where $A$ is one of the comodule algebras listed above. Next, we recall the grouplike cointegrals for these comodule algebras and use them to show that the module categories $\Rep(A)$ are not unimodular.

\medskip

\noindent
\textit{The comodule algebras $A_0(d)$}:
Define the linear map $\lambda_A:
A_0(d) \rightarrow \kk$ by $\lambda_A(G^r)=\delta_{0,r}$ (for $r\in\zZ/d\zZ$). Then, by \cite[Section~5.1.1]{shimizu2019relative}, $\lambda_A$ is a $g_A$-grouplike cointegral with $g_A =1$ on $A_0(d)$. Further, $\lambda_A$ is a Frobenius form on $A_0(d)$.
The Nakayama automorphism with respect to $\lambda_A$ is $\nu=\id_{A_0(d)}$. Next, we calculate the automorphism $\tnu$ defined in $(\ref{eq:uniFunctor})$.
\begin{equation*}
    \tnu(G) \stackrel{}{=}  \langle\alpha_{T(\omega)}, S(g^{m}) \rangle \nu^2(G) = \langle\alpha_{T(\omega)}, g^{-m} \rangle G = \omega^{-m}G 
\end{equation*}
Plugging this, $g_{T(\omega)}=g^{-1}$ and $g_A=1$ into Corollary~\ref{cor:grouplikecointegral}, we get that $A$ admits a unimodular structure if and only if there exists an element $\tilde{G} \in A_0(d)$ such that 
\begin{equation}\label{eq:taft1}
    \tG G^r= \tnu(G^r)\tG = \omega^{-mr}G^r \tG = \omega^{-mr} \tG G^r \;\; \text{and} \;\;  g \ok \tG = \delta(\tG) \;\;\;\text{hold}\; \forall\; r\in \zZ/d\zZ.
\end{equation} 
The first condition of (\ref{eq:taft1}) is satisfied if and only if $\omega^{-mr}=1$ for all $r$. This is satisfied only if $m=N$ and $d=1$. With this choice of $d$, $A_0(d)\cong \kk$. Then, the second condition of (\ref{eq:taft1}) is not satisfied for any $\tG$. Hence, the module categories $\Rep(A_0(d))$ are not unimodular for any $d$.

\medskip

\noindent
\textit{The comodule algebras $A_1(d,\xi)$}:
Observe that the set $\{X^rG^s | r = 0,\ldots, N - 1; s = 0, \ldots, d - 1\}$ is a basis of $A_1(d,\xi)$.
By \cite[Section~5.1.2]{shimizu2019relative}, the linear map $\lambda_A:
A_1(d,\xi)\rightarrow \kk$ given by 
\begin{equation*}
    \lambda_A(X^r G^s)=\delta_{r,N-1}\delta_{s,0}\;\;\;\;\;\;\; (\text{for}\;  r\in \{ 0,\ldots ,N-1\}\; \text{and}\; s\in\zZ/d\zZ)
\end{equation*} is a $g^{-1}$-cointegral and a Frobenius form on $A_1(d,\xi)$. The Nakayama automorphism $\nu$ with respect to $\lambda_A$ is given by $\nu(X)=X$, $\nu(G)=\omega^m G$. 
Using the formula $(\ref{eq:uniFunctor})$, we get that
\begin{align*}
    \Scale[0.98]{
    \tnu(G) = \langle\alpha_{T(\omega)}, S(g^{m}) \rangle \nu^2(G) 
    = \omega^{m}G ,
    \;\;\;\;
    \tnu(X)  = \langle\alpha_{T(\omega)}, S(x) \rangle \nu^2(1) + \langle\alpha_{T(\omega)}, S(g) \rangle \nu^2(X)
    = \omega^{-1}X.
    }
\end{align*}
Hence, $\tnu(X^rG^s) = \omega^{ms -r}X^rG^s$.
For $A_1(d,\xi)$ to be unimodular, by the first condition of (\ref{eq:uniElementGrouplike}), we want $\tnu$ to be an inner automorphism. As in \cite[Section~5.1.2]{shimizu2019relative}, by a case-by-case analysis, we show that $\tnu$ is not an inner automorphism.
\begin{itemize}
    \item \underline{$\xi=0$, $ d> 1$}: Consider the non-zero algebra map $\epsilon:A_1(d,\xi)\rightarrow \kk$ given by $\epsilon(X)=0$ and $\epsilon(G)=1$. Then, $\epsilon\circ \tnu (G) = \omega^m \neq 1$ as $m<N$. Thus, $\epsilon\circ \tnu (G) \neq \epsilon(G)$, and so $\tnu$ is can not be an inner automorphism.
    
    \item \underline{$\xi=0$, $ d= 1$}: In this case, $A_1(d,\xi) = \kk\langle X|X^N=0 \rangle $. As $A_1(d,\xi)$ is commutative, any inner automorphism will just be the identity map. However, $\tnu(X)=\omega^{-1} X \neq X$. Hence, $\tnu$ is not inner.
    
    \item \underline{$\xi\neq 0$, $d<N$}: Fix a $N$-th root $\zeta$ of $\xi$. Define the left $A_1(d,\xi)$-module $V$ as follows. A basis of $V$ is given by $\{v_i\}_{i\in \zZ/d\zZ}$ and the action is given by 
    \[ X\cdot v_i = \zeta v_{i+1}, \;\;\;\;\; G\cdot v_i= \omega^{mi} v_i \;\;\;\;\;\;\;\;\;\;\;\;\;\;\; (i\in \zZ/d\zZ). \]
    Also, consider the $\tnu$-twisted module ${}_{\tnu}V$. Then $X^d$ acts on $V$ and ${}_{\tnu}V$ as scalars $\zeta^d$ and $\omega^{-d}\zeta^d$, respectively. As $d<N$, we have that $\omega^{-d}\neq 1$. Thus, $V$ and ${}_{\tnu}V$ are not isomorphic as left $A_1(d,\xi)$-modules. Hence, $\tnu$ can not be an inner automorphism. 

    \item \underline{$\xi\neq 0$, $d=N$}: Consider the non-zero algebra map $\epsilon: A_1(N,\xi)\rightarrow \kk$ given by $\epsilon(G)=1$ and $\epsilon(X)=\zeta$ for $\zeta$ a $N$-th root of $\xi$. Then, $\epsilon\circ \tnu (X) = \omega^{-1} \zeta \neq \zeta$. Thus, $\epsilon\circ \tnu (X) \neq \epsilon(X)$, and so $\tnu$ is not an inner automorphism.
\end{itemize}
Hence, the module categories $\Rep(A_1(d,\xi))$ are not unimodular for any $d$ and $\xi$.
\end{proof}


\subsection{Further remarks and questions}\label{subsec:remarks} 
We end this section by listing some remarks and directions for further investigation.

\smallskip

First, we show that Theorem~\ref{thm:uniclassification} answers a question of Shimizu \cite[Question~7.15]{shimizu2022Nakayama}.
Take $H$ to be a finite dimensional Hopf algebra over a field $\kk$.
Let $\D=\Corep(H)$ denote the category of left $H$-comodules. Consider a left $H$-comodule algebra $A$. Then, $A$ is nothing but an algebra object in the category $\D$. Furthermore, the category of $A$-bimodules in the $\D$, denoted ${}_A\D_A$, monoidally equivalent to $\Rex_{\C}(\M)$ for $\C=\Rep(H)$ and $\M=\Rep(A)$. Moreover, when $A$ is exact, both these are multitensor categories. So, one can ask when they are unimodular.
In \cite{shimizu2022Nakayama}, the unimodularity of ${}_A\D_A$ was studied under the assumption that the algebra $A$ admits a grouplike cointegral, see Definition~\ref{defn:grouplike}. Further, in \cite[\S 7.5]{shimizu2022Nakayama}, an example of an exact left $H$-comodule algebra that does not admit a grouplike cointegral was provided and it was asked if there is an easy criterion for determining the unimodularity of ${}_A\D_A$ in the general case.

By Lemma~\ref{lem:uniDefn}, the multitensor category $\Rex_{\C}(\M)$ (or ${}_A\D_A$) is unimodular if and only if $\M$ is a unimodular $\C$-module category. Thus, the following Corollary of Theorem~\ref{thm:uniclassification} provides an answer to Shimizu's question.

\begin{corollary}\label{cor:answerShimizu}
For $\C=\Rep(H)$ and $\M=\Rep(A)$, the category $\Rex_{\C}(\M)$ is unimodular if and only if $A$ admits a unimodular element. \qed
\end{corollary}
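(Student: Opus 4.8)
The plan is to derive the statement by chaining together two equivalences already available in the paper, with no new computation required. First I would invoke Definition~\ref{defn:unimodular} — or, what amounts to the same thing, Lemma~\ref{lem:uniDefn} — which says that for an exact left $\C$-module category $\M$, the multitensor category $\Rex_{\C}(\M)$ is unimodular if and only if $\M$ is a unimodular module category, i.e.\ admits a unimodular structure in the sense of Definition~\ref{defn:uniModCat}. To set this up, I would first record that since $A$ is assumed to be an exact left $H$-comodule algebra, $\M = \Rep(A)$ is an exact left $\Rep(H)$-module category (by \cite{andruskiewitsch2007module}), and hence $\Rex_{\C}(\M)$ is genuinely a finite multitensor category, so that the notion of unimodularity indeed applies to it.

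The second ingredient is Theorem~\ref{thm:uniclassification}, specialized to $\C = \Rep(H)$ and $\M = \Rep(A)$: it exhibits an explicit bijection between unimodular structures on the $\Rep(H)$-module category $\Rep(A)$ and unimodular elements of $A$ in the sense of Definition~\ref{defn:uniElement}. In particular, $\Rep(A)$ admits a unimodular structure precisely when $A$ admits a unimodular element. Concatenating the two equivalences then yields the assertion: $\Rex_{\C}(\M)$ is unimodular $\iff$ $\M = \Rep(A)$ is a unimodular module category $\iff$ $A$ admits a unimodular element.

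I do not expect a genuine obstacle here, since the corollary is essentially a repackaging of Theorem~\ref{thm:uniclassification} through the definition of a unimodular module category; the only point requiring a moment's care is that the whole chain presupposes exactness of $A$, which is exactly what makes both the statement ``$\Rex_{\C}(\M)$ is a multitensor category'' and Theorem~\ref{thm:uniclassification} available. It is also worth noting in the write-up that this corollary is precisely the translation needed to answer Shimizu's \cite[Question~7.15]{shimizu2022Nakayama}, as explained in the surrounding discussion.
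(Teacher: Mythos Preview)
Your proposal is correct and follows exactly the paper's approach: the corollary is stated with a \qed and no separate proof, and the surrounding text makes explicit that it is obtained by combining Lemma~\ref{lem:uniDefn} (so that unimodularity of $\Rex_{\C}(\M)$ is equivalent to $\M$ being a unimodular module category) with Theorem~\ref{thm:uniclassification}. Your remark about needing $A$ to be exact is also accurate and is the standing hypothesis in Section~\ref{sec:comoduleAlg}.
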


While Corollary~\ref{cor:answerShimizu} answers Shimizu's question, it is not an easy criterion in general. For instance, see Section~\ref{subsec:Taft} for an example of a computation. This inspires the following discussion.

\smallskip

A finite dimensional Hopf algebra is unimodular if and only if it admits a two sided integral. To define the integrals, the counit $\varepsilon$, which is an algebra map from $H$ to $\kk$, is crucially used. It would be interesting to find a similar characterization of unimodularity of exact $H$-comodule algebras. However, we do not know whether such algebras $A$ admits an algebra map to $\kk$. This raises the following question.

\begin{question}\label{question:unimodularity}
Let $A$ be an exact left $H$-comodule algebra. Is there a way to define left and right integrals for $A$. If so, can the integrals be used to characterize the unimodularity of the exact left $\Rep(H)$-module category $\Rep(A)$? 
\end{question}

\smallskip

Next, two finite tensor categories $\C,\D$ are called \textit{categorically Morita equivalent} if there exists an indecomposable exact left $\C$-module category $\M$ such that $\D^{\rev}\cong \Rex_{\C}(\M)$ as finite tensor categories. It is clear that a tensor category admits a unimodular module category if and only if it is categorically Morita equivalent to a unimodular tensor category. 
Thus, Theorem~\ref{thm:Taft} established that the category $\Rep(T(\omega))$ is not categorically Morita equivalent to a unimodular tensor category. This naturally leads to the following question.
\begin{question}
Find a characterization of finite tensor categories that do not admit a unimodular module category.
\end{question}

\begin{remark}
Let $(\C,\fp)$ be a pivotal finite tensor category. Then, we call $\C$ \textit{trace-spherical} if $\dim(X)=\dim(\lv X)$ holds for all objects $X\in\C$. On the other hand, a pivotal finite tensor category $(\C,\fp)$ is called \textit{(DSPS-)spherical} \cite{douglas2018dualizable} if $\C$ is unimodular and it satisfies that \[  \fp_{X}\circ \fp_{X\rv\rv} = (\fR_X)^{-1}: X\rv\rv \rightarrow \lv\lv X  \; \text{for all} \; X\in\C.\]  
It is known that these two notion of sphericality are not the same. For instance, it was shown in \cite{douglas2018dualizable} that $\Rep(T(\omega))$ is trace-spherical but not DSPS-spherical. 
By Theorem~\ref{thm:Taft}, we obtain that $\Rep(T(\omega))$ is not categorically Morita equivalent to a unimodular tensor category. This, in particular, implies that $\Rep(T(\omega))$ can not be categorically Morita equivalent to a DSPS-spherical tensor category. 
This establishes that the two notions of sphericality are not equivalent even when one considers the weaker notion of categorical Morita equivalence.

\end{remark}


\appendix

\section{Nakayama functor of \texorpdfstring{$\Rep(A)$}{Rep(A)} and its twisted \texorpdfstring{$\Rep(H)$}{Rep(H)}-module structure}\label{app:A}
Let $H$ be a finite-dimensional Hopf algebra and $A$ be a left $H$-comodule algebra. In this appendix we provide proofs of Theorems~\ref{thm:RepANak}, \ref{thm:RepAtwistedLeft} and \ref{thm:RepAtwistedRight} which pertain to the (right) Nakayama functor of the $\Rep(H)$-module category $\Rep(A)$ and its twisted module structure.

\begin{notation}
Consider three $\kk$-vector spaces $M,N,N'$ and a $\kk$-linear map $f:N\rightarrow N'$. We will denote by $f^{\natural}:\Hom(M,N)\rightarrow \Hom(M,N')$, the map $g\mapsto f^{\natural}(g)=f\circ g$. 
Also, for any algebra $A$, we have that $A^*$ is a $A$-bimodule via the actions 
\begin{equation}\label{eq:dualLbimodule}
    \langle a'\rightharpoonup f \leftharpoonup a'',a   \rangle := \langle f,a'' a a'\rangle \hspace{1cm} (a,a',a''\in A, \;f\in A^*). 
\end{equation}
\end{notation}

For this section, we fix $A$ to be an exact left $H$-comodule algebra. Let $\lambda_A$ and $\nu$ denote the Frobenius form and the Nakayama automorphism of $A$, respectively.
We will need the following result.
\begin{lemma}\label{lem:NakLemma}
    Let $A$ be an exact left $H$-comodule algebra. Then, the following results hold.
\begin{enumerate}
    \item[\upshape{(a)}] For $V\in\Vect$ and $M\in \Rep(A)$, the canonical $\Vect$ action is given by $V\btr M=V\ok M$ as a vector space and $a\cdot(v\ok m) = v\ok a\cdot m$.

    \item[\upshape{(b)}] If $A$ is a left $H$-comodule algebra, $A^*$ also becomes a left $H$-comodule with the coaction given by
    $\rho_{A^*}(f):= f_{(-1)}\otimes f_{(0)}\in H\otimes A^*$ 
    where,
    \begin{equation}\label{eq:dualComodule}
        f_{(-1)} \langle f_{(0)}, a\rangle = \langle f, a_{(0)} \rangle S^{-1}(a_{(-1)}) \hspace{1cm} (a\in A, \;f\in A^*).    
    \end{equation}

    \item[\upshape{(c)}] Let $M,\,N$ be left $A$-modules. Then, the map $\psi: N^*\otimes_A M \rightarrow \Hom_A(M,N)^*$ given by $n^*\oA m\mapsto \langle n^*,?(m)\rangle$ is an isomorphism of vector spaces.
    
    \item[\upshape{(d)}] The endofunctors ${}_{(\nu)}(-)$ and $ A^*\otimes_A -$ of the category $\Rep(A)$ are isomorphic via the following natural isomorphisms
    \begin{align}
        \label{eq:alphaNak} \alpha_M:A^*\otimes_A M \rightarrow {}_{\nu}M, & \hspace{1cm} f\otimes_A m\mapsto \langle f,a^i\rangle \nu_A(b_i)\cdot m . \\
        \label{eq:betaNak} \beta_M:{}_{\nu}M \rightarrow A^*\otimes_A M, & \hspace{1cm} m\mapsto \lambda_A\otimes_A m ,
    \end{align}  
\end{enumerate}
\end{lemma}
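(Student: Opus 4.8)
The plan is to establish the four items essentially independently, in the order stated; each unwinds to a concrete verification, and only item (c) needs a genuine argument.

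For (a), recall that the canonical $\Vect$-action is \emph{by definition} the one representing $N\mapsto\Homk(V,\Hom_{\M}(M,N))$, so it suffices to check that the vector space $V\ok M$ with $a\cdot(v\ok m)=v\ok(a\cdot m)$ has this property. Choosing a $\kk$-basis of $V$ identifies an $A$-linear map $V\ok M\to N$ (for this action) with a tuple of $A$-linear maps $M\to N$ indexed by the basis, hence with an element of $\Homk(V,\HomA(M,N))$, naturally in $N$; this is exactly the required adjunction. For (b), I will present $\rho_{A^*}\colon A^*\to H\ok A^*$ as the unique linear map determined by $(\id_H\ok\ev_a)\,\rho_{A^*}(f)=\langle f,a_{(0)}\rangle S^{-1}(a_{(-1)})$ as in \eqref{eq:dualComodule} (using $H\ok A^*\cong\Homk(A,H)$), and verify the comodule axioms directly: counitality follows from $\varepsilon\circ S^{-1}=\varepsilon$ and the counit axiom of the $A$-coaction, while coassociativity is a short Sweedler computation combining coassociativity of $\rho_A$ with $S^{-1}$ being an anti-coalgebra map. (Conceptually this is the standard fact that the $\kk$-dual of a left $H$-comodule is a right $H$-comodule, precomposed with the $S^{-1}$-twist converting a right coaction into a left one.)

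Item (c) is the one nontrivial point, and I will prove it by identifying duals. The evaluation pairing $(n^*\ok m,\,g)\mapsto\langle n^*,g(m)\rangle$ identifies $\Homk(M,N)$ with $(N^*\ok M)^*$. Dualizing the exact sequence $0\to K\to N^*\ok M\to N^*\oA M\to 0$ of finite-dimensional spaces, where $K$ is spanned by the elements $n^*\cdot a\ok m-n^*\ok a\cdot m$, exhibits $(N^*\oA M)^*$ as the annihilator of $K$ inside $\Homk(M,N)$; and $g$ annihilates $K$ iff $\langle n^*,a\cdot g(m)\rangle=\langle n^*,g(a\cdot m)\rangle$ for all $n^*,a,m$, i.e. iff $g\in\HomA(M,N)$. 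Thus $(N^*\oA M)^*\cong\HomA(M,N)$ naturally, and one checks that the composite of the $\kk$-dual of this isomorphism with the double-dual isomorphism of the finite-dimensional space $N^*\oA M$ is precisely $\psi$; hence $\psi$ is an isomorphism. (Alternatively one observes that $\psi$ is a natural transformation between the right-exact functors $N^*\oA(-)$ and $\HomA(-,N)^*$ which is an isomorphism at $M=A$, hence everywhere by a free presentation.)

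For (d) I use that an exact left $H$-comodule algebra is Frobenius, fix a Frobenius system $(\lambda_A,\{a^i\},\{b_i\})$ with Nakayama automorphism $\nu$, and rely only on the identities \eqref{eq:FrobSystem}, the orthogonality $\langle\lambda_A,a^ib_j\rangle=\delta_{ij}$, and \eqref{eq:NakayamaAuto}. I will check in turn: $\alpha_M$ of \eqref{eq:alphaNak} is well defined on $A^*\oA M$, using $\nu(c)a^i\ok b_i=a^i\ok b_ic$ with $c=\nu^{-1}(a)$ and then applying $\nu$ to the second tensor factor; $\alpha_M$ intertwines the left $A$-action on $A^*\oA M$ with the $\nu$-twisted action on ${}_{\nu}M$, using $a^ic\ok b_i=a^i\ok cb_i$; $\beta_M$ of \eqref{eq:betaNak} is well defined and $A$-linear, using $\langle\lambda_A,\nu(a)x\rangle=\langle\lambda_A,xa\rangle$; $\alpha_M\circ\beta_M=\id$ follows from $\langle\lambda_A,a^i\rangle b_i=1_A$, and $\beta_M\circ\alpha_M=\id$ follows from the orthogonality identity via $\langle f,a^i\rangle(b_i\rightharpoonup\lambda_A)=f$ for all $f\in A^*$; naturality in $M$ is immediate from $A$-linearity of module maps. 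The main obstacle is (c): it is the only item not dispatched by formal nonsense or by the Frobenius-system relations, and the care needed there is in tracking which $\kk$-dual is taken and confirming the comparison map really is $\psi$; everything else is bookkeeping.
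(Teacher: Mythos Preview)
Your proposal is correct and follows essentially the same route as the paper: parts (a) and (b) are declared straightforward there as well, part (c) is simply cited from \cite[Lemma~4.1]{shibata2021modified} (so your direct argument via dualizing the quotient sequence, or via right-exactness, is actually more than the paper supplies), and part (d) is handled in the paper by the same Frobenius-system computation you outline, verifying that $\alpha_M$ and $\beta_M$ are mutually inverse $A$-module maps using the identities \eqref{eq:FrobSystem} and \eqref{eq:NakayamaAuto}. The only cosmetic difference is that the paper establishes $\beta_M\circ\alpha_M=\id$ by first passing through $\langle\lambda_A,?\,b_i\rangle$ and then invoking $a^ic\otimes b_i=a^i\otimes cb_i$, whereas you phrase the same step as the dual-basis identity $\langle f,a^i\rangle(b_i\rightharpoonup\lambda_A)=f$.
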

\begin{proof}
Parts $(a)$ and $(b)$ are straightforward to check. Part $(c)$ is \cite[Lemma~4.1]{shibata2021modified}. Thus, we only prove part $(d)$ below.

It is straightforward to check that $\alpha_M$ and $\beta_M$ are natural in $M$ and are maps of left $A$-modules. Below, we check they are isomorphisms.

\[
\begin{array}{rclcl}
    \beta_M[\alpha_M(f\otimes_L m)] & 
    \stackrel{(\textnormal{\ref{eq:alphaNak}}) }{=}  & 
    \beta_M[\langle f,a^i \rangle \nu(b_i)\cdot m] & 
    \stackrel{(\textnormal{\ref{eq:betaNak}}) }{=} &
    \lambda_A\otimes_A \langle f,a^i\rangle \nu(b_i)\cdot m 
    \\
    & \stackrel{(\diamondsuit)}{=} &\langle f,a^i \rangle \; (\lambda_L \leftharpoonup \nu(b_i)) \otimes_A m & \stackrel{(\textnormal{\ref{eq:dualLbimodule}})}{=} &
    \langle f,a^i\rangle \; \langle\lambda_A,\nu(b_i)\; ?\rangle \otimes_A m 
    \\
    & \stackrel{(\textnormal{\ref{eq:NakayamaAuto}})}{=} & \langle f,a^i\rangle \; \langle\lambda_A, ?\; b_i \rangle \otimes_A m  & \stackrel{(\textnormal{\ref{eq:FrobSystem}})}{=} &
    \langle f,a^i\; ?\rangle \; \langle\lambda_A, b_i \rangle \otimes_A m 
    \\
    & = & \langle f, a^i\langle\lambda_A,b_i\rangle \; ? \rangle \otimes_A m & \stackrel{(\textnormal{\ref{eq:FrobSystem}})}{=} & 
    f\otimes_A m ,\vspace{0.2cm}
    \\
    \alpha_M[\beta_M(m)] & \stackrel{(\textnormal{\ref{eq:betaNak}}) }{=} & 
    \alpha_M[\lambda_A\otimes_A m] & \stackrel{(\textnormal{\ref{eq:alphaNak}}) }{=} &
    \langle \lambda_M,a^i \rangle \nu(b_i)\cdot m 
    \\
    & = &   
    \nu( \langle \lambda_M,a^i \rangle b_i)\cdot m & \stackrel{(\textnormal{\ref{eq:FrobSystem}})}{=}  &
    \nu(1_A)\cdot m \hspace{.75cm} = \hspace{.75cm} m.  
\end{array}
\]
Here the equality $(\diamondsuit)$ holds because $\nu(b_i)\in A$, hence we can move it across the tensor product over $A$. Thus $\alpha$ is natural isomorphism with inverse $\beta$.    
\end{proof}

\changelocaltocdepth{1} 
\subsection{Proof of Theorem~\ref{thm:RepANak}} \label{app:ARepANak}
Recall from (\ref{eq:bi}) that the maps $\bi_{M,N}:\HomA(M,N)^* \; \btr M \rightarrow {}_{\nu}M $ are given by 
\begin{equation*}
    \bi_{M,N}(\xi \ok n) = \langle \xi, \phi^N_M(m^i \oA a^j\cdot n) \rangle \nu(b_j) \cdot m_i  \;\;\;\;\;\;\;\;  (\xi \in \HomA(M,N)^*,\; n\in N)
\end{equation*}

We first show that the component maps $\bi_{M,N}$ admit a right inverse.

\begin{lemma}\label{lem:i1}
The map $\omega :{}_{\nu}M\rightarrow \HomA(M,A)^* \; \btr A$ given by $m\mapsto \langle \lambda_A , ?(m) \rangle \ok 1_A$ satisfies that $\bi_{M,A} \circ \omega = \id_{{}_{\nu}M}$. 
\end{lemma}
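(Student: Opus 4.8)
The plan is to verify the identity by a direct computation, unwinding the formula (\ref{eq:bi}) for the component maps $\bi_{M,A}$ together with the Frobenius system relations (\ref{eq:FrobSystem}).

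Concretely, I would first rewrite $\omega(m)$ in the form $\xi_m \ok 1_A$, where $\xi_m \in \Hom_A(M,A)^*$ is the functional $f\mapsto \langle \lambda_A, f(m)\rangle$. Substituting this into (\ref{eq:bi}) with $N = A$ and $n = 1_A$ (so that $a^j\cdot 1_A = a^j$) yields
\[
\bi_{M,A}(\omega(m)) \;=\; \big\langle \xi_m,\, \phi_M^A(m^i\oA a^j)\big\rangle\,\nu(b_j)\cdot m_i \;=\; \big\langle \lambda_A,\, \phi_M^A(m^i\oA a^j)(m)\big\rangle\,\nu(b_j)\cdot m_i,
\]
with the sums over $i$ and $j$ understood. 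Using the defining formula $\phi_M^A(m^*\oA a) = \langle m^*, ?\rangle a$, the morphism $\phi_M^A(m^i\oA a^j)$ sends $m$ to $\langle m^i, m\rangle\, a^j$, so the expression becomes $\langle m^i, m\rangle\,\langle\lambda_A, a^j\rangle\,\nu(b_j)\cdot m_i$. Finally the Frobenius relation $\langle\lambda_A, a^j\rangle\,b_j = 1_A$ from (\ref{eq:FrobSystem}) and the fact that $\nu$ is an algebra automorphism give $\langle\lambda_A, a^j\rangle\,\nu(b_j) = \nu(1_A) = 1_A$, collapsing the $j$-sum; the dual-basis identity $\langle m^i, m\rangle\,m_i = m$ then collapses the $i$-sum and leaves $\bi_{M,A}(\omega(m)) = m$.

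The only step requiring care is the handling of $\phi_M^A(m^i\oA a^j)$: one must keep track of the right $A$-module structure on $M^*$ and the left regular module structure on $A$ when moving the factor $a^j$ across $\oA$, and note that although an individual summand of $\phi_M^A(m^i\oA a^j)(-)$ fails to be $A$-linear, the coupling of the $i$-sum to the trailing factor $m_i$ (and of the $j$-sum to $\nu(b_j)$) makes the total expression a genuine element of ${}_\nu M$. Everything else is routine bookkeeping with the Frobenius system relations.
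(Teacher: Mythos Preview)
Your proof is correct and follows essentially the same direct computation as the paper: substitute $\omega(m)$ into the formula (\ref{eq:bi}) with $N=A$, $n=1_A$, evaluate $\phi_M^A(m^i\oA a^j)$ at $m$, and collapse the sums using $\langle\lambda_A,a^j\rangle b_j = 1_A$ and the dual-basis identity. Your closing paragraph about the well-definedness of the individual summands is a reasonable caution, but the paper itself does not pause over this point and simply carries out the same chain of equalities.
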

\begin{proof}
Observe that  
\begin{align*}
    \bi_{M,A} \circ \omega(m) & =\bi_{M,A}\big( \langle \lambda_A,?(m)\rangle \ok 1_A \big) \\
    & = \langle \langle \lambda_A,?(m)\rangle, \phi_M^A(m^i \oA a^j 1_A) \rangle \;\nu(b_j)\cdot m_i \\
    & =  \langle \lambda_A,\langle m^i,m\rangle a^j \rangle \;\nu(b_j)\cdot m_i  = \langle \lambda_A, a^j \rangle \nu(b_j)\cdot m = m.
\end{align*}
Thus, $\bi_{M,A}$ admits a right inverse. 
\end{proof}

To establish that ${}_{\nu}M$ is equal to the coend $\dN(M)$, we show that the maps $\bi_{M,N}$ are dinatural.
\begin{lemma}\label{lem:i2}
The maps $\bi_{M,N}$ are morphism in $\Rep(A)$, and they are dinatural.
\end{lemma}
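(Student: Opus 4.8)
The plan is to verify the two assertions separately, each by a short computation starting from the defining formula \eqref{eq:bi} for $\bi_{M,N}$.

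\textit{$A$-linearity.} By Lemma~\ref{lem:NakLemma}(a) the left $A$-action on $\HomA(M,N)^*\btr N=\HomA(M,N)^*\ok N$ is $a\cdot(\xi\ok n)=\xi\ok(a\cdot n)$, while the target ${}_{\nu}M$ carries the $\nu$-twisted action, so I must check $\bi_{M,N}(\xi\ok a\cdot n)=\nu(a)\cdot\bi_{M,N}(\xi\ok n)$ for all $a\in A$. Expanding the left-hand side by \eqref{eq:bi} and using $a^j\cdot(a\cdot n)=(a^ja)\cdot n$ rewrites it as $\langle\xi,\phi_M^N(m^i\oA (a^ja)\cdot n)\rangle\,\nu(b_j)\cdot m_i$. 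Now the assignment $(x,y)\mapsto\langle\xi,\phi_M^N(m^i\oA x\cdot n)\rangle\,\nu(y)\cdot m_i$ is $\kk$-bilinear in $(x,y)\in A\times A$ (each factor is built from $\kk$-linear operations), hence descends to a $\kk$-linear map $A\ok A\to{}_{\nu}M$; applying it to the Frobenius-system identity $a^ic\otimes b_i=a^i\otimes cb_i$ of \eqref{eq:FrobSystem} with $c=a$ turns the expression into $\langle\xi,\phi_M^N(m^i\oA a^j\cdot n)\rangle\,\nu(ab_j)\cdot m_i=\nu(a)\cdot\bi_{M,N}(\xi\ok n)$, as needed. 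Together with the evident $\kk$-linearity this shows $\bi_{M,N}$ is a morphism in $\Rep(A)$.

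\textit{Dinaturality.} Let $g\colon N\to N'$ be a morphism in $\Rep(A)$; I need $\bi_{M,N}\circ\big((g^{\natural})^*\ok\id_N\big)=\bi_{M,N'}\circ\big(\id\ok g\big)$ as maps $\HomA(M,N')^*\btr N\to{}_{\nu}M$, where $g^{\natural}\colon\HomA(M,N)\to\HomA(M,N')$ is post-composition with $g$. Fix $\xi'\in\HomA(M,N')^*$ and $n\in N$. Since $g$ is $A$-linear, $\phi_M^N(m^i\oA a^j\cdot n)$ followed by $g$ equals $\phi_M^{N'}(m^i\oA a^j\cdot g(n))$ for each $i,j$, so $\langle(g^{\natural})^*(\xi'),\phi_M^N(m^i\oA a^j\cdot n)\rangle=\langle\xi',g\circ\phi_M^N(m^i\oA a^j\cdot n)\rangle=\langle\xi',\phi_M^{N'}(m^i\oA a^j\cdot g(n))\rangle$. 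Substituting into \eqref{eq:bi} gives $\bi_{M,N}\big((g^{\natural})^*(\xi')\ok n\big)=\bi_{M,N'}\big(\xi'\ok g(n)\big)$, which is exactly the dinaturality identity.

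Neither step is a genuine obstacle; the computations are short. The only thing that demands care is bookkeeping of the several module structures at play — the right $A$-module structure on $M^*$ implicit in the tensor product $\oA$, the $\nu$-twist on ${}_{\nu}M$, and the variance of $\HomA(M,-)^*$ in the dinaturality square — so that everything is matched against the conventions fixed in Theorem~\ref{thm:RepANak} and Lemma~\ref{lem:NakLemma}. Combined with Lemma~\ref{lem:i1}, these facts will then allow one to identify $({}_{\nu}M,\{\bi_{M,N}\})$ with the coend $\dN_{\Rep(A)}(M)$ in the argument that follows.
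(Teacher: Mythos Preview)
Your proof is correct and follows essentially the same approach as the paper: both verify $A$-linearity via the Frobenius-system identity $a^i c\otimes b_i=a^i\otimes cb_i$ from \eqref{eq:FrobSystem}, and both verify dinaturality by using that $g\circ\phi_M^N(m^i\oA a^j\cdot n)=\phi_M^{N'}(m^i\oA a^j\cdot g(n))$ for $A$-linear $g$. Your packaging of the first step through the bilinear auxiliary map is if anything a bit cleaner than the paper's version.
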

\begin{proof}
Observe that 
\begin{align*}
    \bi_{M,N}(a\cdot(\xi\ok n)) 
    & = \bi_{M,N}(\xi\ok a\cdot n) =  \langle \xi, \phi_M^N(m^i\oA a^j a\cdot n)\rangle \nu(b_j)  \cdot m_i \\
    &  = \langle \xi, \phi_M^N(m^i\leftharpoonup a^j a\oA  n)\rangle\nu(b_j)  \cdot m_i  
    \stackrel{(\textnormal{\ref{eq:FrobSystem}})}{=} \langle \xi, \phi_M^N(m^i\oA  n)\rangle  (\nu(b_j)a^j a) \cdot m_i \\
    & \stackrel{(\textnormal{\ref{eq:FrobSystem}})}{=} \langle \xi, \phi_M^N(m^i\oA  n)\rangle  (\nu(a b_j)a^j ) \cdot m_i 
    = \nu(a)\cdot \bi_{M,N}(\xi\ok n) = a\cdot_{\nu}\bi_{M,N}(\xi\ok n) 
\end{align*}
Thus, $\bi_{M,N}$ is a map of left $A$-modules. 
Further, for $f:N\rightarrow N'\in\Rep(A)$ and $\xi\in\HomA(M,N')^*$, we get that
\begin{align*}
    \bi_{M,N'}(\xi\ok f(n)) =  \langle\xi,\phi_M^{N'}(m^i \oA a^j \cdot f(n))\rangle \nu(b_j) \cdot m_i & 
    \stackrel{}{=} \langle\xi, f\circ \phi_M^N(m^i\oA n)\rangle  \nu(b_j)a^j \cdot m_i \\
    & = \bi_{M,N}(\xi\circ f^{\natural} \ok n)
\end{align*}
This proves dinaturality. 
\end{proof}

Next, we show that the pair $({}_{\nu}M,\bi_{M})$ satisfies the universal property of coends.

\begin{lemma}\label{lem:i3}
    The pair $({}_{\nu}M,\bi_{M})$ satisfies the universal property of coends.
\end{lemma}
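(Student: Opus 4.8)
The plan is to verify the universal property of the coend directly, building on Lemmas~\ref{lem:i1} and~\ref{lem:i2}. Recall $\dN(M)=\int^{N\in\Rep(A)}\Hom_A(M,N)^*\btr N$; to prove that this coend is realized by $({}_{\nu}M,\bi_M)$, it suffices to show that for every $T\in\Rep(A)$ and every dinatural family $\theta=\{\theta_N:\Hom_A(M,N)^*\btr N\to T\}_{N}$ there is a unique morphism $\bar\theta:{}_{\nu}M\to T$ in $\Rep(A)$ with $\bar\theta\circ\bi_{M,N}=\theta_N$ for all $N$. That $\bi_M$ is itself a cowedge for this functor is precisely Lemma~\ref{lem:i2}.

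First I would pin down the candidate and prove uniqueness. By Lemma~\ref{lem:i1}, $\bi_{M,A}$ is a (split) epimorphism with section $\omega:{}_{\nu}M\to\Hom_A(M,A)^*\btr A$, $m\mapsto\langle\lambda_A,?(m)\rangle\ok 1_A$, so any $\bar\theta$ with the required factorizations satisfies in particular $\bar\theta\circ\bi_{M,A}=\theta_A$, whence $\bar\theta=\bar\theta\circ\bi_{M,A}\circ\omega=\theta_A\circ\omega$. This both proves uniqueness and identifies the only possible candidate as $\bar\theta:=\theta_A\circ\omega$ (a $\kk$-linear map a priori).

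Next I would verify the factorization $\bar\theta\circ\bi_{M,N}=\theta_N$ for every $N$. Since simple tensors span $\Hom_A(M,N)^*\btr N=\Hom_A(M,N)^*\ok N$, it is enough to test on $\xi\ok n$. For $n\in N$, the map $f_n:A\to N$, $a\mapsto a\cdot n$, is $A$-linear, and dinaturality of $\theta$ along $f_n$ (evaluated at $\xi\ok 1_A$) gives $\theta_N(\xi\ok n)=\theta_A\big((f_n^{\natural})^*\xi\ok 1_A\big)$, where $f_n^{\natural}:\Hom_A(M,A)\to\Hom_A(M,N)$ is post-composition with $f_n$ and $(f_n^{\natural})^*$ its transpose. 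On the other hand, I would unwind $\omega\circ\bi_{M,N}(\xi\ok n)$ using the formula~(\ref{eq:bi}) for $\bi_{M,N}$ and the formula for $\omega$, then simplify — collapsing the double sum over the dual bases with $\langle m^i,m\rangle m_i=m$, moving $\nu(b_j)$ across $A$-linear maps $M\to A$ via the Nakayama relation~(\ref{eq:NakayamaAuto}), and applying the Frobenius-system identities~(\ref{eq:FrobSystem}), in particular $c=\langle\lambda_A,c\,b_j\rangle a^j$ — to reach $\omega\circ\bi_{M,N}(\xi\ok n)=(f_n^{\natural})^*\xi\ok 1_A$. Applying $\theta_A$ yields $\bar\theta\circ\bi_{M,N}(\xi\ok n)=\theta_N(\xi\ok n)$, so $\bar\theta\circ\bi_{M,N}=\theta_N$ for all $N$. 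Taking $N=A$ gives $\bar\theta\circ\bi_{M,A}=\theta_A$; since $\theta_A$ is a morphism in $\Rep(A)$ and $\bi_{M,A}$ is an $A$-linear epimorphism, it follows that $\bar\theta$ is $A$-linear, hence a genuine morphism in $\Rep(A)$. This establishes the universal property and completes the proof of Theorem~\ref{thm:RepANak}.

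The hard part will be the identity $\omega\circ\bi_{M,N}(\xi\ok n)=(f_n^{\natural})^*\xi\ok 1_A$, which is a somewhat intricate manipulation of the two Frobenius systems occurring in~(\ref{eq:bi}) together with the Nakayama automorphism; everything else is formal, the one point deserving care being that $\omega$ is not itself a module map, so the $A$-linearity of $\bar\theta$ must be deduced at the end from the $N=A$ factorization rather than from the formula $\bar\theta=\theta_A\circ\omega$.
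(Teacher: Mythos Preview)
Your proposal is correct and follows essentially the same approach as the paper: both define the factoring map as $\theta_A\circ\omega$, establish the factorization via dinaturality along $f_n:A\to N$ together with the key computation $\omega\circ\bi_{M,N}(\xi\ok n)=(f_n^{\natural})^*\xi\ok 1_A$, and deduce uniqueness from the section property in Lemma~\ref{lem:i1}. Your extra remark that $A$-linearity of $\bar\theta$ must be inferred from the $N=A$ factorization (since $\omega$ is not a priori a module map) is a point the paper does not make explicit and is a worthwhile addition.
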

\begin{proof}
Suppose that there exists an object $X\in \Rep(A)$ along with dinatural maps
\begin{equation*}
    \mathbbm{j}_{M,N}: \HomA(M,N)^* \; \btr N\rightarrow X.
\end{equation*}
We will show that there exists a unique map $\kappa_X:{}_{\nu}M\rightarrow X$ such that $\kappa_X\circ  \bi_{M,N}=\mathbbm{j}_{M,N}$ for all $M\in\M$.
Since $\mathbbm{j}$ is dinatural in $N$, we get that for all $\xi\in\Hom_{\M}(M,N)^*$, $n\in N$ and  $f:N\rightarrow N'$
\begin{equation}
    \mathbbm{j}_{M,N'}(\xi\ok f(n)) = \mathbbm{j}_{M,N}( \xi\circ f^{\natural} \ok n)
\end{equation}
Plugging $f_n:A\rightarrow N$ where $a\mapsto a\cdot n$ in the above equation yields us
\begin{equation}\label{eq:coend1}
    \mathbbm{j}_{M,N}(\xi\ok f_n(a)) = \mathbbm{j}_{M,A}( \xi\circ f_n^{\natural} \ok a)
\end{equation}
With $a=1_A$, LHS is equal to $\mathbbm{j}_{M,N}(\xi\ok n)$. On the other hand,
\begin{align*}
    \omega \circ \bi_{M,N}(\xi\ok n) 
    & = \omega \big( \langle \xi, \phi^N_M(m^i \oA a^j\cdot n) \rangle \nu(b_j) \cdot m_i  \big)
    \\
    & = \langle \xi, \phi^N_M(m^i \oA a^j \cdot n) \rangle
    \langle \lambda_A, ?( \nu(b_j) \cdot m_i) \rangle \ok 1_A 
    \\
    & = \langle \xi, \phi^N_M(m^i \oA a^j \cdot n) \rangle
    \langle \lambda_A, \nu(b_j) ?( m_i) \rangle \ok 1_A 
    \\
    & = \langle \xi, \phi^N_M(m^i \oA a^j \cdot n) \rangle
    \langle \lambda_A, ?( m_i) b_j\rangle \ok 1_A 
    \\
    & = \langle \xi, \phi^N_M(m^i \oA a^j ?(m_i) \cdot n) \rangle
    \langle \lambda_A, b_j\rangle \ok 1_A 
    \\
    & = \langle \xi, \phi^N_M(m^i \oA ?(m_i)\cdot n) \rangle \ok 1_A 
    \\
    & = \langle \xi, \phi^N_M(m^i \oA ?(m_i)\cdot f_n(1_A)) \rangle \ok 1_A 
    \\
    & = \langle \xi, \phi^N_M(m^i \oA f_n(?(m_i))) \rangle \ok 1_A 
    \\
    & = \langle \xi, f_n (\phi^N_M(m^i \oA ?(m_i))) \rangle \ok 1_A 
    \\
    & = \langle \xi \circ f_n^{\natural},\phi^N_M(m^i \oA ?(m_i)) \rangle \ok 1_A
    \\
    & = \langle \xi \circ f_n^{\natural},? \rangle \ok 1_A \; =\; \xi \circ f_n^{\natural} \ok 1_A  
\end{align*}
Plugging the above formula in (\ref{eq:coend1}) at $a=1$, we get that for all $N\in\Rep(A)$
\begin{equation}\label{eq:coend2}
    \mathbbm{j}_{M,N}(\xi\ok n) = \mathbbm{j}_{M,A}( \xi\circ f_n^{\natural} \ok 1_L) = \mathbbm{j}_{M,A} \circ \omega \circ \bi_{M,N}(\xi\ok n) = \kappa \circ \bi_{M,N}(\xi \ok n),
\end{equation}
where $\kappa_X = \mathbbm{j}_{M,A} \circ  \omega $. 
In order to check that $\kappa_X$ is the unique map so that (\ref{eq:coend2}) holds, we plug $N=A$ in (\ref{eq:coend2}) to get $\mathbbm{j}_{M,A} =\kappa_X \circ \bi_{M,A}$. By Lemma~\ref{lem:i1}, $\bi_{M,A}$ admits a right inverse $\omega$. Therefore, $\kappa_X$ is the unique map such that $\mathbbm{j}_{M,N} = \kappa_X \circ \bi_{M,N}$ holds.
\end{proof}

\begin{proof}[Proof of Theorem~\ref{thm:RepANak}]
Together Lemma~\ref{lem:i2} and Lemma~\ref{lem:i3} imply the claim.
\end{proof}


\subsection{Proof of Theorem~\ref{thm:RepAtwistedRight}}\label{app:ARepAtwisted}
Let $A$ be an exact right $H$-comodule algebra. By Lemma~\ref{lem:NakLemma}(d), we know that $A^*\oA M\xrightarrow{\sim} {}_{\nu}M$. Thus, we will employ \cite[Theorem~7.3]{shibata2021modified}, which provided the twisted right $\C$-module structure of the functor $A^*\oA -:\Rep(A)\rightarrow \Rep(A)$, to get the twisted right $\C$-module structure of $\dN(M)={}_{\nu}M$.

Using \cite[Theorem~7.3]{shibata2021modified}, the twisted right $\C$-module structure 
\begin{equation}\label{eq:nakThm3a}
    \Psi_{M,X}:(A^*\oA M)\tl \lv\lv X \rightarrow A^*\oA (M\tl X)   
\end{equation} 
of the functor $A^*\oA -:\Rep(A)\rightarrow \Rep(A)$ is given by 
\begin{equation}
    \Psi_{M,X}[(a^*\oA m)\ok \phi_X(x)] = a^*_{(0)}\oA( m\ok S(a^*_{(1)})\cdot x  )
\end{equation}
Thus, the that twisted right $\C$-module structure of $\dN(M)={}_{\nu}M$ is given by
\begin{align*}
    \ofn^r_{X,M}= (\alpha_{M\tl X } \circ \Psi_{M,X} \circ (\beta_M \tl \id_{\lv\lv X}) ) : \;
    {}_{\nu}M\tl \lv\lv X \;
    & \rightarrow \; {}_{\nu}(M\tl X).     
\end{align*}
The following computation provides an explicit formula for $\ofn^r_{X,M}$.
\begin{align*}
    \ofn^r_{X,M}(m\ok \phi_X(x)) & =  \alpha_{M\tl X } \circ \Psi_{M,X} \circ (\beta_M \tl \id_{\lv\lv X}) \big( m\ok \phi_X(x) \big) \\
    & =  \alpha_{M\tl X } \circ \Psi_{M,X} \big( (\lambda \oA m) \ok \phi_X(x)  \big) \\
    & =  \alpha_{M\tl X }\big(\lambda_{(0)} \oA(m\ok S(\lambda_{(1)})\cdot x )\big)\\
    & =  \langle \lambda_{(0)},a^i \rangle \nu(b_i) \cdot [ m \ok S(\lambda_{(1)})\cdot x] \\
    & =  \langle \lambda,a^i_{(0)} \rangle \nu(b_i) \cdot [ m \ok S (S (a^i_{1}))\cdot x] 
\end{align*}

\smallskip

One can check that the inverse of the map (\ref{eq:nakThm3a}) is given by 
\begin{equation*}
    \Psi^{-1}_{M,X}[a^* \oA (m \ok x) ] = (a^*_{(0)} \oA m) \ok \phi_X(a^*_{(1)}\cdot x)
\end{equation*}
Now again using the isomorphism $A^*\oA M\xrightarrow{\cong} {}_{\nu}M$ from Lemma~\ref{lem:NakLemma}(d), we get that the maps $\fn^r_{X,M}:  {}_{\nu}(M\tl X) \rightarrow {}_{\nu}M\tl \lv\lv X$ is given by the following map.
\begin{align*}
    \fn^r_{X,M}(m\ok x) & =  (\alpha_M \tl \id_{\lv\lv X}) \circ \Psi^{-1}_{M,X} \circ \beta_{M\tl X} \big( m\ok x \big)\\
    & =  (\alpha_M \tl \id_{\lv\lv X}) \circ \Psi^{-1}_{M,X} \big( \lambda \oA(m\ok x) \big) \\
    & =  (\alpha_M \tl \id_{\lv\lv X}) \big( (\lambda_{(0)} \oA m) \ok \phi_X(\lambda_{(1)} \cdot x  ) \big) \\
    & =  \langle \lambda_{(0)},a^i\rangle \nu(b_i) \cdot m \ok \phi_X(\lambda_{(1)} \cdot x) \\ 
    & =  \langle \lambda,a^i_{(0)} \rangle \nu(b_i) \cdot m \ok \phi_X(S(a^i_{(1)}) \cdot x) 
\end{align*}
Hence, the proof is finished. \qed

\bibliographystyle{alpha}
\bibliography{references}

\end{document}